\documentclass[12pt,centertags,oneside]{amsart}
\usepackage{amsmath,amstext,amsthm,a4,amssymb,amscd}
\usepackage{mathrsfs,dsfont}
\usepackage{fancyhdr}
\usepackage{epsf}
\usepackage{mathtools}
\usepackage
{hyperref}
\hypersetup{
    colorlinks = true,
    linkcolor={magenta},
    citecolor={magenta},
     }
\usepackage{charter}
\usepackage{typearea}
\usepackage{xcolor}
\usepackage{listings}
\usepackage{enumitem}
\usepackage[utf8]{inputenc}

\usepackage[a4paper,width=16.2cm,top=2.5cm,bottom=2cm,
footskip=1cm]{geometry}

\numberwithin{equation}{section}
\newtheorem{theorem}{Theorem}[section]
\newtheorem{lemma}[theorem]{Lemma}
\newtheorem{proposition}[theorem]{Proposition}

\theoremstyle{definition}
\newtheorem{definition}[theorem]{Definition}
\newtheorem{example}{Example}

\newtheorem{remark}[theorem]{Remark}

\numberwithin{equation}{section}

\newcommand{\field}[1]{\mathbb{#1}}
\newcommand{\Z}{\field{Z}}
\newcommand{\R}{\field{R}}
\newcommand{\C}{\field{C}}
\newcommand{\N}{\field{N}}

\newcommand{\cali}[1]{\mathscr{#1}}
\newcommand{\cC}{\cali{C}}

\DeclareMathOperator{\Ker}{Ker}

\DeclareMathOperator{\Id}{Id}
\DeclareMathOperator{\supp}{supp}

\DeclareMathOperator{\SL}{SL}
\DeclareMathOperator{\Sp}{Sp}
\DeclareMathOperator{\GL}{GL}
\DeclareMathOperator{\SU}{SU}
\DeclareMathOperator{\M}{M}

\newcommand{\spin}{$\text{spin}^c$ }
\newcommand{\norm}[1]{\lVert#1\rVert}

\newcommand{\om}{\omega}

\newcommand{\wi}{\widetilde}

\newcommand{\comment}[1]{}

\allowdisplaybreaks

\begin{document}

\title{Bergman kernels and Poincar\'e series}

\date{\today}

\author{Louis Ioos}
\address{CY Cergy Paris Université, 95300 Cergy-Pontoise,
France}
\email{louis.ioos@cyu.fr}
\thanks{L.\ I.\ was partially 
supported by DIM-R\'egion Ile-de-France, by
the European Research Council Starting grant 757585
and by the ANR-23-CE40-0021-01 JCJC
project QCM}
\author{Wen Lu}
\address{School of Mathematics and Statistics,
\& Hubei Key Laboratory of Engineering Modeling 
\newline
    \mbox{\quad} and Scientific Computing,
 Huazhong University of Science and Technology, 
 Wuhan 430074, \newline
    \mbox{\quad}\,China}
\email{wlu@hust.edu.cn}
\thanks{W.\ L.\ supported by National Natural Science Foundation
of China (Grant Nos. 11401232, 11871233)}
\author{Xiaonan Ma}
\address{Chern Institute of Mathematics and LPMC, Nankai University, Tianjin 30071, P.R. China}
\email{xiaonan.ma@nankai.edu.cn}
\thanks{X.\ M.\ was partially supported by
Nankai Zhide Foundation, ANR-14-CE25-0012-01,  and
funded through the Institutional Strategy of
the University of Cologne within the German Excellence Initiative.}
\author{George Marinescu}
\address{Universit{\"a}t zu K{\"o}ln,  Mathematisches Institut,
    Weyertal 86-90,   50931 K{\"o}ln, Germany
    \newline
    \mbox{\quad}\,Institute of Mathematics `Simion Stoilow',
	Romanian Academy,
Bucharest, Romania}
\email{gmarines@math.uni-koeln.de}
\thanks{G.\ M.\ partially supported by DFG funded
project SFB TRR 191}

\subjclass[2000]{53D50, 53C21, 32Q15}

\begin{abstract}
We show that the Bergman kernel of a finite-volume quotient of a 
Hermitian manifold $\wi{X}$ with bounded geometry by a discrete group $\Gamma$ 
of its isometries is the same as the averaging over $\Gamma$ of the Bergman kernel on $\wi{X}$. 
We then use these results when $\wi{X}$ is a Hermitian symmetric space to show 
that a large class of relative Poincar\'e series does not vanish. 
This extends the results of Borthwick-Paul-Uribe and Barron (formerly Foth) 
to the case of general locally symmetric spaces of finite volume.
\end{abstract}

\maketitle

\tableofcontents

\section*{Introduction}

Let $(\wi{X},\wi{J},g^{T\wi{X}})$ be a complete
Kähler manifold of complex dimension $n$, equipped with a
holomorphic Hermitian line bundle $(\wi{L},h^{\wi{L}})$
whose Chern curvature satisfies
\begin{equation}\label{preq}
\frac{\sqrt{-1}}{2\pi}R^{\widetilde{L}}(\cdot,\wi{J}\cdot)=g^{T\wi{X}}\,,
\end{equation}
and let also $\Gamma$ be a discrete group acting properly discontinuously
and effectively on $(\wi{X},\wi{J},g^{T\wi{X}})$
by biholomorphic isometries lifting
to a holomorphic Hermitian action on $(\wi{L},h^{\wi{L}})$,
so that
the quotient space $X:=\wi{X}/\Gamma$ has a natural
structure of a Kähler orbifold $(X,J,g^{TX})$, equipped with a positive
orbifold line bundle $(L,h^L)$ whose Chern curvature satisfies \eqref{preq}
for $g^{TX}$ in the same way.
For any $p\in\N^*$, we write $\wi{L}^p:=\wi{L}^{\otimes p}$
for the $p$-th tensor power of $\wi{L}$ over $\wi{X}$,
and for each $x,\,y\in\wi{X}$, we consider the associated \emph{Bergman kernel} $\wi{P}_p(x,y)\in\wi{L}^p_x\otimes(\wi{L}^p_y)^*$
defined as the Schwartz kernel with respect to the Riemannian volume form
of $(\wi{X},g^{T\wi{X}})$ of the orthogonal projection operator
$\wi{P}_p:L^2(\wi{X},\wi{L}^p)\to H^0_{(2)}(\wi{X},\wi{L}^p)$ from the
space of square integrable sections to the subspace $H^0_{(2)}(\wi{X},\wi{L}^p)$
of square-integrable holomorphic sections of $\wi{L}^p$. Similarly,
we consider for each $x,\,y\in X$ the associated Bergman kernel $P_p(x,y)\in L^p_x\otimes (L^p_y)^*$
defined analogously as the Schwartz kernel of
the orthogonal projection operator
$P_p:L^2(X,L^p)\to H^0_{(2)}(X,L^p)$ on square-integrable holomorphic sections of $L^p$.
The first main result of this paper in then the following.

\begin{theorem}\label{th-covintro}
Suppose that $(\wi{X},g^{T\wi{X}})$ and $(\wi{L},h^{\wi{L}})$
have bounded geometry
in the sense of Definition \ref{bndedgeomdef}.
Then if the volume of $X:=\widetilde{X}/\Gamma$ is finite,
there exists $p_0\in\N^*$ such that for any
$p\geq p_0$, we have
\begin{equation}\label{p=sumpintro}
\sum_{g\in\Gamma}(g^{-1},1).\wi{P}_p(g.\wi{x},\wi{y})
=P_p(\pi(\wi{x}),\pi(\wi{y}))\;,
\end{equation}
where for any compact set of $K\subset\wi{X}$,
the convergence is uniform and absolute in $\wi{x}, \wi{y}\in K\times\wi{X}\cup\wi{X}\times K$, and 
where $(g^{-1},1)\in\Gamma\times\Gamma$ acts on $\wi{L}^p\boxtimes (\wi{L}^p)^*$
over $\wi{X}\times\wi{X}$ for all $g\in\Gamma$.
\end{theorem}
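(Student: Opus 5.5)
The plan is to show that the averaged kernel
\[
Q_p(\wi{x},\wi{y}):=\sum_{g\in\Gamma}(g^{-1},1).\wi{P}_p(g.\wi{x},\wi{y})
\]
converges, descends to $X\times X$, and is the Schwartz kernel of an orthogonal projection onto $H^0_{(2)}(X,L^p)$.

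\textbf{Step 1 (convergence).} Under the bounded geometry assumption of Definition \ref{bndedgeomdef}, for $p\geq p_0$ the spectral gap of the Kodaira Laplacian gives off-diagonal exponential decay of $\wi{P}_p$:
\[
|\wi{P}_p(x,y)|\leq Cp^n e^{-c\sqrt{p}\,d(x,y)}.
\]
Summability over $\Gamma$ then needs a counting bound on orbits. For $\wi{x}\in K$, the orbit points $g.\wi{x}$ are uniformly separated up to the finite stabilizer. Their number in a ball $B(\wi{y},R)$ is therefore bounded by $C\,\mathrm{vol}(B(\wi{y},R+\epsilon))/\mathrm{vol}(B(\wi{x},\epsilon))$, and bounded geometry (Ricci lower bound, Bishop--Gromov) gives $\mathrm{vol}(B(\wi{y},R+\epsilon))\leq Ce^{aR}$ uniformly in $\wi{y}$. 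Hence
\[
\sum_g |\wi{P}_p(g.\wi{x},\wi{y})|\leq Cp^n\sum_k e^{ak-c\sqrt{p}\,k}<\infty
\]
once $c\sqrt{p}>a$. This gives uniform absolute convergence on $K\times\wi{X}$; the case $\wi{X}\times K$ follows by symmetry, since $\wi{P}_p(x,y)^*=\wi{P}_p(y,x)$. Enlarging $p_0$ is exactly what makes the exponential rate beat the volume growth. I expect this step to be the main analytic obstacle, because the constants must be uniform in $\wi{y}$ on a noncompact space.

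\textbf{Step 2 (descends and is holomorphic).} $\Gamma$-equivariance of $\wi{P}_p$ gives $Q_p(h\wi{x},\wi{y})=(h,1).Q_p(\wi{x},\wi{y})$ and likewise in $\wi{y}$, so $Q_p$ defines a kernel $Q_p(x,y)$ on $X\times X$. Since the convergence is locally uniform, $Q_p(\cdot,y)$ is holomorphic in the first variable.

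\textbf{Step 3 ($Q_p$ is the projection).} Fix a fundamental domain $F$; for simplicity, lift the orbifold $X$ with volume measure so that $\int_X=\int_F$. For $s\in L^2(X,L^p)$ with lift $\wi{s}$, unfolding gives
\[
\int_X Q_p(x,y)s(y)\,dy=\int_{\wi{X}}\wi{P}_p(\wi{x},\wi{y})\wi{s}(\wi{y})\,d\wi{y}.
\]
Here $\wi{s}$ is only bounded in $L^2$ on translates of $F$, not globally $L^2$; Fubini is justified by the absolute convergence in Step 1 together with the $L^1$-type decay, using $\sup_{\wi{x}\in K}\int|\wi{P}_p(\wi{x},\cdot)|<\infty$ from the exponential decay and volume growth.

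\emph{Reproducing property.} If $s$ is holomorphic, then $\wi{s}$ is a bounded holomorphic section: $L^2$ on $X$ implies local $L^2$ bounds uniform along the orbit, and bounded geometry gives sup bounds. The reproducing property of $\wi{P}_p$ extends to such sections that are holomorphic with polynomial, hence sub-exponential, growth. This holds because $\wi{P}_p$ acts on weighted spaces $e^{\epsilon d}L^2$ for $\epsilon<c\sqrt{p}$, with the spectral gap persisting by a conjugation argument. So $Q_p s=s$.

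\emph{Self-adjointness and range.} $Q_p$ is self-adjoint by the symmetry of $\wi{P}_p$. Its image lies in holomorphic sections, and it is bounded on $L^2(X)$ by a Schur test using the uniform bound on $\int_X|Q_p(x,y)|dy\leq\int_{\wi{X}}|\wi{P}_p(\wi{x},\cdot)|$. A bounded self-adjoint operator with range in $H^0_{(2)}(X,L^p)$ that is the identity there is the orthogonal projection $P_p$, which proves \eqref{p=sumpintro}.

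Finite volume is used to ensure $H^0_{(2)}$ on $X$ relates correctly, and for the measure on the fundamental domain. The delicate part besides Step 1 is the weighted-space extension of the reproducing identity.
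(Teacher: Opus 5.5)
Your Step 1 is the paper's argument: Theorem \ref{thm:3.2new23} combined with the orbit count of Lemma \ref{t3.2} via Bishop comparison. After that the routes differ.

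\emph{Comparison with the paper.} The paper never checks directly that the averaged kernel is a projection. It first proves the heat-kernel identity \eqref{e=sume} by uniqueness for the heat equation on the orbifold $X$; finite volume is what puts the averaged heat kernel in $L^2(X)$. It then lets $u\to\infty$, using the spectral gaps of $D_p^2$ and $\wi D_p^2$, formula \eqref{e-p=inte} and the uniform bound \eqref{estsumheatfinal}. You instead verify by hand that $Q_p$ is Schur-bounded, self-adjoint, has holomorphic range and reproduces $H^0_{(2)}(X,L^p)$. This needs no heat kernel and no spectral information on $X$. As far as I can see, your Schur bound and unfolding also never use $\mathrm{Vol}(X)<\infty$; your closing remark on where it enters is not substantiated. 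The price is that everything rests on the identity $\int_{\wi X}\wi P_p(\wi x,\wi y)\wi s(\wi y)\,dv_{\wi X}(\wi y)=\wi s(\wi x)$ for a lift $\wi s$ that is not in $L^2(\wi X)$. This is exactly what the heat-kernel route avoids.

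\emph{The gap.} You claim $\wi s$ is bounded, or of polynomial growth. This is unjustified precisely in the paper's new case, $X$ noncompact of finite volume. The quantity $\int_{B(\wi x,r)}|\wi s|^2$ equals $\int_X|s|^2$ weighted by $\#\{g\in\Gamma: g.\wi y\in B(\wi x,r)\}$. This multiplicity is unbounded as $\pi(\wi x)$ goes into a cusp; in a cusp of a finite-area quotient of $\mathbb{H}$ it grows like $\mathrm{Im}\,\wi x$. Uniformity along one orbit does not give uniformity over $X$.

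What you can prove is exponential growth at a $p$-independent rate. Take a Dirichlet domain centred at a point $x_0$ with trivial stabilizer. If $\wi y$ lies in it and $g.\wi y,\,h.\wi y\in B(\wi x,r)$, then $d(\wi y,x_0)<d(\wi x,x_0)+r$, hence $d(h^{-1}g.x_0,x_0)<2d(\wi x,x_0)+4r$. Lemma \ref{t3.2} together with the uniform Cauchy estimates then gives $|\wi s(\wi x)|\leq C_p\,e^{(m-1)K\,d(\wi x,x_0)}\|s\|_{L^2}$.

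Your weighted-space argument absorbs this for $p$ large, but it must actually be carried out, since it does not follow from Theorem \ref{thm:3.2new23}. Take $\phi$ a smoothed distance to $x_0$ and $\tfrac32(m-1)K<\epsilon\ll\sqrt{p}$, so that $e^{-\epsilon\phi}\wi s\in L^2$. You then need a Combes--Thomas estimate showing that the resolvent of $e^{-\epsilon\phi}\wi D_p e^{\epsilon\phi}$, on a circle inside the gap, has kernel $e^{-\epsilon\phi(x)}(\lambda-\wi D_p)^{-1}(x,y)e^{\epsilon\phi(y)}$. Then the Riesz projection at $0$ has kernel $e^{-\epsilon\phi(x)}\wi P_p(x,y)e^{\epsilon\phi(y)}$ and fixes $e^{-\epsilon\phi}\wi s\in\Ker(e^{-\epsilon\phi}\wi D_pe^{\epsilon\phi})$, which is the reproducing identity you need. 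With these two points supplied, your route is a valid alternative proof.
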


Theorem \ref{th-covintro} is a special case of Theorem \ref{th-cov},
which holds more generally when $g^{T\wi{X}}$ does not necessarily satisfies \eqref{preq}
and when $\wi{L}^p$ is tensored by a general holomorphic Hermitian vector bundle $(\wi{E},h^{\wi{E}})$ for each $p\in\N$. In the
particular
case when $X=\widetilde{X}/\Gamma$ is smooth and compact,
Theorem \ref{th-covintro} has been established in \cite[Theorem 2]{MM15}
under the more general hypotheses of Theorem \ref{th-cov}
and in the general symplectic case, and also by Lu and Zelditch
in \cite[Theorem 2]{LZ16}.
The proof of Theorem \ref{th-covintro} given in Section \ref{s3.4} is based on
the exponential decay of the Bergman kernel of $\wi{X}$ away from the diagonal
established in \cite[Theorem 1]{MM15}, which we recall in Theorem
\ref{thm:3.2new23}, as well as a comparison of the Bergman kernel with the
heat kernel of the associated \emph{Kodaira Laplacian} as $p\to+\infty$.

The main interest of Theorem \ref{th-covintro} lies in the fact that
for any fixed $\wi{y}\in\wi{X}$, formula \eqref{p=sumpintro}
provides an explicit construction of a $\Gamma$-invariant holomorphic
section of $\wi{L}^p$ by averaging over $\Gamma$.
As emphasized for instance in \cite[II.\,Chap.\,7]{Kol95},
in the particular case
when $\wi{X}$ is a Hermitian symmetric space,
its Bergman kernel admits a completely explicit formula, and
formula \eqref{p=sump} coincides with
the classical \emph{Poincar\'e series} of the theory of automorphic forms.
In Section \ref{s5.4}, we show how fundamental properties of the Bergman kernels
of compact orbifolds established by Dai, Liu and Ma in \cite{DLM06}
and extended to the non-compact case by Ma and Marinescu in \cite{MM04a}
translate via Theorem \ref{th-covintro}
into the non-vanishing of the classical Poincar\'e series respectively associated with
$SL_2(\R)^n$ in Proposition \ref{Ex1}, with $\Sp_{2n}(\R)$ in
Proposition \ref{Ex3}
and with $SU(n,1)$ in Proposition \ref{Ex4},
as soon as their \emph{weight} $p\in\N$ is large enough.


More generally, Section \ref{s5.2} explains how to apply Theorem 5.1 to the method 
of \emph{relative Poincar\'e series} in the context of the theory of automorphic forms. 
The goal is to produce holomorphic sections of $L^p$ that are 
invariant under the action of the group $\Gamma$ from holomorphic sections 
that are invariant under the action of a subgroup $\Gamma_0$ of $\Gamma$. 
This is achieved by averaging over the set $\Gamma_0\backslash\Gamma$ 
of cosets $[g]:=\{g_0g\in\Gamma:g_0\in\Gamma_0\}$ for all $g\in\Gamma$.

Specifically, using the asymptotic expansion of isotropic states
established in \cite[Theorem 5.3]{Ioo18b},
we show in Theorem \ref{t5.6} that
the analogue of formula \eqref{p=sumpintro} holds for such averages
when one replaces the role of a point $\wi{y}\in\wi{X}$
by a $\Gamma_0$-invariant compact submanifold of $X:=\wi{X}/\Gamma$,
and that the $\Gamma_0$-invariant holomorphic
section thus constructed does not vanish as soon as $p\in\N$
is large enough.
This result is valid under the fundamental geometric assumption 
that the underlying $\Gamma_0$-invariant submanifold satisfies the \emph{Bohr-Sommerfeld condition}, 
a notion originating from the theory of geometric quantization, 
which is further elaborated upon in Section \ref{BSsec}.

In the special
case of the relative Poincar\'e series associated with $SL_2(\R)$,
where the discrete subgroup $\Gamma\subset SL_2(\R)$ acts on
the hyperbolic
plane $\wi{X}=\mathbb{H}$ and $\Gamma_0\subset\Gamma$ is the subgroup generated by an \emph{hyperbolic element}
$g_0\in\Gamma$, the underlying $\Gamma_0$-invariant submanifold coincides with the closed geodesic
of $X:=\mathbb{H}/\Gamma$ generated by $g_0$, which automatically satisfies the Bohr-Sommerfeld
condition. In the case when $X:=\mathbb{H}/\Gamma$
is smooth and compact, the non-vanishing of
the associated relative Poincaré series is then a result of
Borthwich, Paul and Uribe in \cite[Theorem 4.11]{BPU95}, and the general case
when $X:=\mathbb{H}/\Gamma$ is only assumed to have finite volume is
the result of \cite[Theorem 6.7,\,(6.18)]{Ioo18b}, which we state and reprove in
Theorem \ref{Ex2} as a consequence of our set-up.

On the other hand, in the case the relative Poincar\'e series associated with $SU(n,1)$,
where the discrete subgroup $\Gamma\subset SU(n,1)$ acts on
the unit ball $\wi{X}=B_n$ in $\C^n$ and
$\Gamma_0\subset\Gamma$ is the subgroup generated by an \emph{loxodromic element}
$g_0\in\Gamma$, the underlying $\Gamma_0$-invariant submanifold again
coincides with the closed geodesic
of $X:=B_n/\Gamma$ generated by $g_0$, and Barron showed in
 \cite[\S\,2.2]{Bar18} that this geodesic
satisfies the Bohr-Sommerfeld condition if all eigenvalues of $g_0\in\Gamma$ are real
and if the endpoints $x,\,y\in\partial B_n$ of the geodesic $\wi{\gamma}\subset  B_n$ generated by $g_0$
belong to $\R^n\subset\C^n$. We then obtain
the following result as a consequence of Theorem \ref{th-covintro},
which shows once again that these relative Poincaré series do not
vanish as soon as their weight $p\in\N^*$ is large enough.

\begin{theorem}\label{Ex5intro}
Let $\Gamma\subset\SU(n,1)$ be a discrete subgroup acting effectively
and satisfying
$\text{Vol}(B_n/\Gamma)<+\infty$,
let $$g_0=
\begin{pmatrix}
A & b \\
c^{T} & d
\end{pmatrix}\in\Gamma,$$
with $A\in M_n(\C)$, $b,\,c\in\C^n$ et $d\in\C$,
be a loxodromic element with real eigenvalues,
and let $\Gamma_0\subset\Gamma$ be the subgroup
generated by $g_0$. Let $\wi\gamma:\R\rightarrow B_n$
be the geodesic generated by $g_0$, and let
$x,\,y\in\partial B_n$ be the two points of the boundary
of $B_n$ joined by $\wi\gamma$, and assume that $x,\,y\in\R^n$.
Then there exists $p_0\in\N^*$ such that  for
all $p\geq p_0$, the convergent series
\begin{equation}\label{RPSloxintro}
\sum_{[g]\in\Gamma_0\backslash\Gamma}
(c.z+d)^{-2(n+1)p}\frac{1}
{(\langle g.z,x\rangle\langle g.z,y\rangle)^{(n+1)p}}
\end{equation}
does not vanish identically in
$z\in B_n$, where
$\langle\cdot,\cdot\rangle$ is the canonical Hermitian product of $\C^n$
and $c.z:=\sum_{j=1}^nc_jz_j$.
\end{theorem}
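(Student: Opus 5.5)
We would deduce Theorem \ref{Ex5intro} from the general relative Poincar\'e series result, Theorem \ref{t5.6}, applied to $\wi{X}=B_n$ with its Bergman metric (normalized so that \eqref{preq} holds), $\wi{L}=K_{B_n}^{*}$ or an appropriate root of the canonical bundle, $\Gamma\subset\SU(n,1)$ and $\Gamma_0=\langle g_0\rangle$. The $\Gamma_0$-invariant submanifold is the lift $\wi\gamma\subset B_n$ of the closed geodesic $\gamma=\wi\gamma/\Gamma_0$. This is a compact isotropic (one-dimensional) submanifold of the quotient $B_n/\Gamma_0$. By Barron's result \cite[\S\,2.2]{Bar18}, the assumptions that $g_0$ has real eigenvalues and $x,y\in\R^n$ ensure that $\gamma$ satisfies the Bohr--Sommerfeld condition. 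Possibly this holds only for $p$ in a multiple of some integer, but since $g_0$ has real eigenvalues the holonomy should be trivial. Theorem \ref{t5.6} then yields $p_0$ such that, for $p\ge p_0$, the average over $\Gamma_0\backslash\Gamma$ of the $\Gamma_0$-invariant isotropic state $\wi{s}_p$ attached to $\wi\gamma$ converges to a nonvanishing $\Gamma$-invariant holomorphic section. This uses the finite volume of $B_n/\Gamma$ through Theorem \ref{th-covintro}.

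The second step is to identify this average with the series \eqref{RPSloxintro}. In the standard trivialization of $\wi{L}^p$ over $B_n$, the Bergman kernel of $B_n$ is $\wi{P}_p(z,w)=c_p(1-\langle z,w\rangle)^{-(n+1)p}$. The isotropic state is $\wi{s}_p(z)=\int_{\wi\gamma}\wi{P}_p(z,w)\,s_p^\gamma(w)\,dv_\gamma(w)$, where $s_p^\gamma$ is the flat (Bohr--Sommerfeld) section along $\wi\gamma$. We would normalize coordinates by an element of $\SU(n,1)$ moving $x,y$ to $\pm e_1$, keeping the points real. Then $\wi\gamma$ becomes the real segment $(-1,1)e_1$, and the integral is computed explicitly as a one-variable integral of $(1-tz_1)^{-(n+1)p}$ against the Bohr--Sommerfeld density $(1-t^2)^{(n+1)p-1}dt$ or similar. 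This should give a constant times $(\langle z,x\rangle\langle z,y\rangle)^{-(n+1)p}$ up to a nonzero factor, modulo the exact form of the Hermitian product, which the boundary-point formula absorbs. In the original coordinates the same expression holds by equivariance, since $\Gamma_0$-invariance is automatic because $g_0$ fixes $x,y$ and the eigenvalue factors are real.

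The third step is to apply the automorphy factor. The action of $g\in\Gamma$ on $\wi{L}^p$ in this trivialization is multiplication by the Jacobian factor $(c.z+d)^{-2(n+1)p}$, possibly up to a normalization of the root of $K$. Hence $(g^{-1},1).\wi{s}_p(g.z)$ is exactly the general term of \eqref{RPSloxintro}, and nonvanishing transfers.

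The main obstacle is the second step: carrying out the explicit integral along the geodesic and checking that the section produced by Theorem \ref{t5.6} is exactly, up to a nonzero constant, $(\langle z,x\rangle\langle z,y\rangle)^{-(n+1)p}$. This includes verifying that the Bohr--Sommerfeld flat section along $\wi\gamma$ has the required form in the real normalization. If the direct integral is awkward, we would instead check that this function is holomorphic, $\Gamma_0$-equivariant of the right weight, and concentrated microlocally on $\wi\gamma$ with the Lagrangian-type profile. We would then invoke uniqueness of the leading term in \cite[Theorem 5.3]{Ioo18b}.
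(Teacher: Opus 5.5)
Your route is genuinely different from the paper's and can be made to work. As written, though, its central computation is set up inconsistently, and one auxiliary claim is false.

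\textbf{Comparison with the paper.} The paper never computes the seed section. It takes $\wi{L}=K_{B_n}^2$, gets the Bohr--Sommerfeld condition for $(X,K_X^2)$ from Barron, and obtains non-vanishing of $\wi{s}_{\gamma,p}$ from Theorem \ref{t5.6}. It then identifies the series with $C_p\,\wi{s}_{\gamma,p}$ by quoting Foth--Katok's period formula (the series satisfies \eqref{rep}) together with the uniqueness in Proposition \ref{proprepgal}. You instead evaluate \eqref{isostatetil} in closed form and use $\pi^*s_{\Lambda,p}=\sum_{[g]}g^{-1}.\wi{s}_{\wi\Lambda,p}(g.\,\cdot)$ from the proof of Theorem \ref{t5.6}. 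Your route is more self-contained: it avoids Foth--Katok and shows exactly where the real eigenvalues enter, at the cost of an explicit integral. The paper's route needs no computation but rests on two external results.

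\textbf{Bundle and exponents.} The weight in \eqref{RPSloxintro} forces $\wi{L}^p=K_{B_n}^{2p}$, i.e.\ $\wi{L}=K_{B_n}^2$; note also that $K_{B_n}^*$ is negative and violates \eqref{RL>eps}. So the Bergman kernel is $c_p(1-\langle z,w\rangle)^{-2(n+1)p}$, not exponent $(n+1)p$. With your exponents, $\int_{-1}^1(1-tz_1)^{-m}(1-t^2)^{m-1}dt$ is not a multiple of $(1-z_1^2)^{-m}$; the $z_1^2$ Taylor coefficients already disagree.

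\textbf{Corrected computation.} Along $t\mapsto te_1$ one has $\nabla_{\partial_t}\sigma=-\frac{t}{1-t^2}\sigma$, so $(\sigma/|\sigma|)^{2(n+1)}$ is the unit flat section. Arclength is proportional to $dt/(1-t^2)$. With $m=(n+1)p$ and $t=\tanh s$,
\[
\int_{-1}^1(1-tz_1)^{-2m}(1-t^2)^{m-1}\,dt=(1-z_1^2)^{-m}\int_\R\frac{ds}{\cosh^{2m}s}\,.
\]
Hence $\wi{s}_{\wi\Lambda,p}(z)=C_p\big((1-\langle z,x\rangle)(1-\langle z,y\rangle)\big)^{-(n+1)p}\sigma_z^{2(n+1)p}$ with $C_p>0$. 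This is how the pairing in \eqref{RPSloxintro} has to be read: literally, $\langle z,e_1\rangle\langle z,-e_1\rangle=-z_1^2$ vanishes inside $B_n$.

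\textbf{Holonomy.} Real eigenvalues do not make the holonomy of $K$, or of its roots, trivial. After normalization, $g_0$ acts on the plane spanned by $e_1$ and the homogeneous coordinate as $e^{i\theta}$ times a boost, and on the complement by a unitary. For the unit flat section $\zeta=\sigma/|\sigma|$ one gets $g_0.\zeta_w=e^{i\theta}\zeta_{g_0.w}$ along the diameter. Real eigenvalues only give $e^{i\theta}=\pm1$, so the holonomy on $K_X$ is $(\pm1)^{n+1}$. This equals $-1$, for instance, when $n$ is even and the eigenvalues at $x,y$ are negative. The holonomy is trivial only on $K_X^2$, which is why Barron and the paper use $K_X^2$. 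With that choice your invariance check reads $g_0^{-1}.\wi{s}(g_0.z)=e^{-2i(n+1)p\theta}\,\wi{s}(z)=\wi{s}(z)$.

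\textbf{Fallback.} Your fallback argument is not available. Theorem \ref{theonorme} is a norm asymptotic, not a uniqueness statement, and microlocal concentration on $\wi\gamma$ does not determine a section. The correct uniqueness tool is the reproducing property of Proposition \ref{proprepgal}, which is precisely the paper's route.
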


Theorem \ref{Ex5intro} is a consequence of Theorem \ref{Ex5}, and the explicit
formula \eqref{RPSloxintro} for the relative Poincar\'e series
has been computed by Foth and Katok in \cite[\S\,6.3]{FK01}, who also study their applications
to the arithmetic theory of automorphic forms.
In the particular case when $X:=B_n/\Gamma$ is smooth and compact,
Theorem \ref{Ex5intro} is the result of Barron in \cite[Theorem 3.3]{Bar18}.

Finlly, if $n=2$ and for a loxodromic element $g_0\in\Gamma$
with real eigenvalues in $\SU(2,1)$,
Barron considers in \cite[\S\,3,\,\S\,4]{Fot02} relative Poincar\'e series 
associated with some remarkable Lagrangian tori inside $X:=B_2/\Gamma$
satisfying the Bohr-Sommerfeld condition,
and we show in Theorem \ref{Ex6}
that these relative Poincar\'e series do not vanish as soon as
their weight $p\in\N$
is large enough.
In the particular case when $X:=B_2/\Gamma$ is smooth and compact,
this is the result of Barron in \cite[\S\,4]{Fot02}.
Let us also note that for general $n\in\N^*$ and in the case $X:=B_n/\Gamma$
smooth and compact,
Alluhaibi and Barron studied in \cite{AB17} the case of relative
Poincar\'e series associated with submanifolds of $B_n$
that are not necessarily isotropic.

This paper is organized as follows. In Section \ref{setting},
we recall the basic set-up of Bergman kernels on manifolds with
bounded geometry following for instance \cite[\S\,6.1]{MM06},
then recall the exponential decay of the Bergman kernel away from
the diagonal established in \cite[Theorem 1]{MM15}, which
forms the
basis of Theorem \ref{th-covintro}. In Section \ref{s3.4},
we give the proof of Theorem \ref{th-covintro}, while in
Section \ref{BSsec}, we introduce the notion of a
Bohr-Sommerfeld submanifold and recall in Theorem
\ref{proprepgal} the asymptotic expansion of the
associated isotropic
states established in \cite[Theorem\,5.3]{Ioo18b},
which forms the basis of our applications to relative
Poincaré series in the next sections. In Section \ref{s5.2},
we define relative Poincaré series on general
covering manifolds and establish their existence in
Theorem \ref{t5.6} based on our main Theorem \ref{th-covintro}. Finally, in Section \ref{s5.3} we specialize
to the case of Hermitian symmetric spaces, showing that
one recovers in this way the classical Poincaré series
and their relative version, while in Section \ref{s5.4},
we apply the results of the previous sections to establish
the non-vanishing of a large class of relative Poincaré
series as soon as their weight is large enough, giving
among many others the proof of Theorem \ref{Ex5intro}.

\section{Setting}
\label{setting}

We consider a complex manifold $(X,J)$ with complex structure $J$,
and complex dimension $n$.
Let $g^{TX}$ be a $J$-invariant Riemannian metric on $X$, 
also called \emph{Hermitian metric}, and write $d^X(\cdot,\cdot)$ for the
associated Riemannian distance.
Let $\Theta$ be the real $(1, 1)$-form defined 
by		
\begin{align}\label{2.18}		
\Theta(X, Y)=g^{TX}(JX, Y).		
\end{align}		
The Riemannian volume form of $g^{TX}$ is given by
$$dv_X=\Theta^n/n!\,.$$			
Let $(E,h^E)$ be a holomorphic Hermitian vector bundle on $X$, and write $R^E\in\Omega^2(X,\textup{End}(E))$ for its Chern curvature.
On the space of smooth sections with compact support
$\cC^{\infty}_0(X,E)$, 
we introduce
the following $L^2$-inner product associated with $h^E$
and $dv_X$,
defined for any $s_1,\,s_2\in\cC^{\infty}_0(X,E)$ by
\begin{equation}\label{lm2.0}
\big\langle  s_1,s_2 \big\rangle	
	:=\int_X\big\langle  s_1(x),s_2(x)\big\rangle_{E}\,dv_X(x)\,,
\end{equation}
%
%
%
where $\langle\cdot,\cdot\rangle_{E}$ is the Hermitian
product on $E$ induced by $h^E$, and we write $\|\cdot\|_{L^2}$ for the associated $L^2$-norm.
The completion of $\cC^{\infty}_0(X,E)$
with respect to \eqref{lm2.0} is denoted by $L^{2}(X,E)$,
and we write
\begin{equation}\label{lm2.02a}		
H^{0}_{(2)}(X,E):=		
\big\{s\in L^{2}(X,E) : \text{$s$ is holomorphic}\big\}\,.		
\end{equation}	
for the space of holomorphic $L^{2}$ sections of $E$.

We deduce from the Cauchy estimates for holomorphic functions that
for every compact set
$K\subset X$ there exists $C_K>0$ such that for all 
$s\in H^{0}_{(2)}(X,E)$,
\begin{equation}\label{b1.1}
\sup_{x\in K}|s(x)|\leqslant C_K\|s\|_{L^2}\,.
\end{equation}
This implies that $H^{0}_{(2)}(X,E)$ is a closed subspace of
$L^2(X,E)$. Moreover,
$H^{0}_{(2)}(X,E)$ is separable (cf.\ \cite[p.\,60]{Weil:58}).

Let now $T^{(1,0)}X$ be the holomorphic tangent bundle on $X$,
$T^{(0,1)}X$ the conjugate of $T^{(1,0)}X$
and $T^{*(0,1)}X$ the dual bundle of $T^{(0,1)}X$.
We denote by $\Lambda^q(T^{*(0,1)}X)$ the bundle of $(0,q)$-forms
on $X$ and by
$\Omega^{0,q}(X,E)$ the space of sections of the bundle 
$\Lambda^q(T^{*(0,1)}X)\otimes E$ over $X$.
We denote by $\Omega_0^{0,{\bullet}}(X,E)$		
the subspace of $\Omega^{0,{\bullet}}(X,E)$ consisting of elements		
with compact support.		
The Dolbeault operator acting on sections of the holomorphic vector
bundle $E$ gives rise to the Dolbeault complex
\(\big(\Omega^{0,\bullet}(X,E), \overline{\partial}^{E}\big)\,.\)
We denote by $\overline{\partial}^{E,*}$ the formal adjoint 
of $\overline{\partial}^{E}$
with respect to the $L^2$-inner product \eqref{lm2.0}.
Set
\begin{align}\label{lm2.1}
D = \sqrt{2}\big(\, \overline{\partial}^{E}
+ \,\overline{\partial}^{E,*}\,\big)
\,.
\end{align}
Then its square
$D^2 = 2\big(\, \overline{\partial}^{E}\overline{\partial}^{E,*}
+ \,\overline{\partial}^{E,*}\overline{\partial}^{E}\,\big)$
is twice the \emph{Kodaira Laplacian} of $(E,h^E)$, which
preserves the $\Z$-grading on $\Omega^{0,\bullet}(X,E)$,
and whose kernel restricted to $\Omega^{0,0}(X,E)$ coincides
with the holomorphic sections of $E$.

Let us also consider a holomorphic Hermitian line bundle $(L,h^L)$ over $X$.
We will make the following assumptions.

\begin{definition}\label{bndedgeomdef}
We say that $(X,J,g^{TX})$, $(E,h^E)$ and $(L,h^L)$
have \emph{bounded geometry} if $(X,g^{TX})$ is complete with positive injectivity radius,
if the derivatives of any 
order of $R^E$, $R^L$, $J$ and $g^{TX}$ are uniformly bounded on $X$ 
in the norm induced by $g^{TX}$, $h^{E}$ and $h^L$.
\end{definition}

We will also make the assumption, extending \eqref{preq},
that there exists
$\varepsilon>0$ such that
\begin{equation}\label{RL>eps}
\sqrt{-1}R^L(\cdot,J\cdot)> \varepsilon g^{TX}\;.
\end{equation}
One fundamental example of a Hermitian manifold $(X,J,g^{TX})$ with bounded geometry is
a Galois 
covering of a compact K\"{a}hler manifold $M$,
so that $(X,J,g^{TX})$ satisfies the assumptions of
Definition \ref{bndedgeomdef} by
taking the line bundle $(L,h^L)$ to be the pull-back of a positive holomorphic line bundle on $M$, so that it satisfies the assumption
\eqref{RL>eps}.

For any $p\in\N$, write $L^p:=L^{\otimes p}$ for the $p$-th tensor power of $L$, and equip the tensor product $L^p\otimes E$
with the Hermitian metric $h^{L^p\otimes E}$ induced by $h^L$ and $h^E$.

\begin{definition} \label{almt2.1b}
For any $p\in\N$, the \emph{Bergman projection} is the orthogonal projection
\[
P_p:L^{2}(X,L^p\otimes E)\to H^{0}_{(2)}(X,L^p\otimes E)\,.
\]
\end{definition}
By \eqref{b1.1}, for a fixed $x\in X$
the evaluation functional $s\mapsto s(x)$ on
$H^{0}_{(2)}(X,L^p\otimes E)$
is continuous.
By the Riesz representation theorem there exists
$P_p(x,\cdot)\in L^2(X,(L^p\otimes E)_x\otimes (L^p\otimes E)^{*})$
such that
\begin{equation}\label{lm2.01a}
s(x)=\int_{X}P_p(x,x') s(x') dv_{X}(x')\,,
\quad\text{for all $s \in H^{0}_{(2)}(X,L^p\otimes E)$\,.}
\end{equation}
\begin{definition} \label{almt2.1c}
For any $p\in\N$, the section $P_p(\cdot,\cdot)$ of
$L^p\otimes E\boxtimes (L^p\otimes E)^{*}$ over
$X\times X$ is called the \emph{Bergman kernel} of
$H^{0}_{(2)}(X,L^p\otimes E)$.
\end{definition}
For any $p\in\N$, let $d_p:= \dim H^{0}_{(2)}(X,L^p\otimes E)\in\N\cup\{\infty\}$, and 
let $\{s_i\}_{i=1}^{d_p}$ be any orthonormal basis of
$H^{0}_{(2)}(X,L^p\otimes E)$ with respect to the inner
product \eqref{lm2.0}. Using the estimate \eqref{b1.1} we can show that
\begin{equation} \label{bk2.4}
P_p(x,x')= \sum_{i=1}^{d_p} s_i (x) \otimes s_i(x')^*
\in (L^p\otimes E)_x\otimes (L^p\otimes E)_{x'}^*\,,
\end{equation}
where the right-hand side converges on every compact subset of $X$
together with all its derivatives (see e.\,g.\ \cite[p.\,62]{Weil:58}).
Thus $P_p(\cdot,\cdot)\in \cC^{\infty}(X\times X,L^p\otimes E\boxtimes (L^p\otimes E)^{*})$.
It follows from \eqref{bk2.4} that
\begin{equation}\label{lm2.01}
(P_ps)(x)=\int_{X}P_p(x,x') s(x') dv_{X}(x')\,,
\quad\text{for all $s\in L^2(X,L^p\otimes E)$\,.}
\end{equation}
that is, $P_p(\cdot,\cdot)$ is the integral kernel (hence Schwartz kernel)
of the Bergman projection $P_p$.

We end this section by recalling
the following fundamental result about the 
exponential decay of the Bergman kernel.
\begin{theorem}[{\cite[Theorem 1]{MM15}}]\label{thm:3.2new23}
Suppose that $(\wi{X},g^{T\wi{X}})$, $(\wi{L},h^{\wi{L}})$ and $(\wi{E},h^{\wi{E}})$
have bounded geometry in the sense of Definition \ref{bndedgeomdef}
and that assumption \eqref{RL>eps} is satisfied.
Then there exist $\boldsymbol{c} >0$, $\boldsymbol{p}_{0}>0$, 
which can be determined explicitly
from the geometric data 
such that  for any $k\in \N$, there exists $C_k>0$ such that 
for any $p\geqslant \boldsymbol{p}_{0}$, $x,x'\in X$, we have 
\begin{equation}\label{eq:0.7}
\left| P_p(x,x')\right|_{\cC^k} \leqslant C_k \, p^{n+\frac{k}{2}}
\, \exp\!\left(- \boldsymbol{c} \,\sqrt{p}\, d(x,x')\right).
\end{equation}
\end{theorem}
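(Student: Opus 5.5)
The statement is \cite[Theorem 1]{MM15}, quoted without proof; here is how I would reprove it. The plan is to compare $P_p$ with spectral functions of the Kodaira Laplacian and use finite propagation speed together with a spectral gap.

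\textbf{Step 1: spectral gap.} Write $D_p$ for the operator \eqref{lm2.1} on $\Omega^{0,\bullet}(X,L^p\otimes E)$. By the Bochner--Kodaira--Nakano formula and \eqref{RL>eps}, whose constants are uniform by bounded geometry, I expect $D_p^2\geq 2\varepsilon p - C$ on forms of positive degree. Since $\overline{\partial}$ intertwines $D_p^2$ on $(0,0)$-forms with $D_p^2$ on $(0,1)$-forms, the same bound holds on the orthogonal complement of the kernel in degree $0$. Hence
\[
\operatorname{Spec}(D_p^2)\subset\{0\}\cup[\,2\varepsilon p-C,\infty)\,.
\]
Completeness makes $D_p$ essentially self-adjoint.

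\textbf{Step 2: functional calculus.} Choose an even function $f\in\mathcal S(\R)$ with $f(0)=1$ and $f=0$ on $|\lambda|\geq\sqrt{\varepsilon p}$ after rescaling. Then $P_p=f(D_p)$ for $p$ large, so that $P_p=f(D_p/\sqrt p)$ for a fixed $f$. I would instead take $f$ with Fourier transform decaying like $e^{-c|t|}$, for instance a Gaussian-type function adjusted so that $f(0)=1$ and $f$ is small on the spectral region. The point is that, since $f$ must vanish on the spectrum away from $0$, compact Fourier support and the vanishing condition are incompatible with analyticity. I would therefore split $f=f_1+f_2$, where $\hat f_1$ has compact support in $|t|\le\delta d(x,x')$ and $f_2$ is the remainder with $\hat f_2$ supported in $|t|\ge \delta d$.

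\textbf{Step 3: finite propagation speed.} The kernel of $f_1(D_p/\sqrt p)$, written as $\int \hat f_1(t)\cos(t D_p/\sqrt p)\,dt$, vanishes at distance greater than $\delta d/\sqrt p$. Choosing the scale accordingly, and writing $d=d(x,x')$, the off-diagonal kernel is governed by the tail
\[
\int_{|t|\ge c\sqrt p\, d}|\hat f(t)|\,dt\lesssim e^{-c\sqrt p\,d}\,.
\]
\textbf{This is the main obstacle:} one must construct $f$ which equals $1$ at $0$, vanishes on $[\sqrt{2\varepsilon}-o(1),\infty)$ in $|\lambda|$, and has exponentially decaying Fourier tail. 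Such $f$ exist: take $\chi$ smooth, equal to $1$ near $0$ and $0$ beyond the gap; the precise exponential rate comes only from the use of $\mathbb 1_{\{0\}}$. I would replace the spectral projection by $P_p = \mathbb 1_{\{0\}}(D_p)=\phi(D_p)\,\mathbb 1_{\{0\}}$ and estimate via the resolvent instead. Concretely, use the Agmon-type weighted estimate: for $|\Re|$ of the weight gradient less than $c\sqrt p$, conjugating $D_p^2$ by $e^{\alpha\sqrt p\,\rho(\cdot,x')}$, with $\rho$ a uniformly smoothed distance function (available by bounded geometry), perturbs $D_p^2$ by $O(\alpha p)$. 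This preserves the gap for small $\alpha$ and gives $\|e^{\alpha\sqrt p\rho}P_p e^{-\alpha\sqrt p\rho}\|\le C$, via the contour-integral formula $P_p=\frac1{2\pi i}\oint_{|z|=\varepsilon p}(z-D_p^2)^{-1}dz$.

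\textbf{Step 4: pointwise $\cC^k$ bounds.} Pass from weighted $L^2$ bounds to pointwise ones via Sobolev embedding on balls of radius $1/\sqrt p$ (rescaled normal coordinates, uniform by bounded geometry). Using elliptic estimates for $D_p^2$, each rescaling gives a factor $p^{n/2}$ on each side and $p^{1/2}$ per derivative. This yields \eqref{eq:0.7}.
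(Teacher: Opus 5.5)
\textbf{Verdict.} Your final argument is correct, but it takes a different route from the source. The paper does not prove this statement: it quotes \cite[Theorem 1]{MM15}. There the decay is obtained through the heat kernel. Gaussian off-diagonal bounds for $\exp(-\frac{u}{p}D_p^2)$ of the type \eqref{estsumheatfinal} are combined with the spectral gap via \eqref{e-p=inte}. Balancing the gap factor $e^{-\mu u}$ against the Gaussian factor $e^{-apd^2/u}$ at $u\sim\sqrt p\,d$ then gives $e^{-\boldsymbol{c}\sqrt p\,d}$.

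\textbf{Your route.} What you use is a Combes--Thomas/Agmon estimate. The gap keeps the contour at distance $\gtrsim p$ from $\operatorname{Spec}(D_p^2)$, and conjugation by $e^{\alpha\sqrt p\rho}$ is a small relative perturbation there. Hence $e^{\alpha\sqrt p\rho}P_pe^{-\alpha\sqrt p\rho}$ is bounded uniformly in $p$ and in the base point. This gives $\|\chi_xP_p\chi_{x'}\|_{L^2\to L^2}\lesssim e^{-\alpha\sqrt p\,d(x,x')}$ for cut-offs supported on balls of radius $p^{-1/2}$, and your Step 4 converts this into \eqref{eq:0.7}.

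\textbf{What each route buys.} Yours is shorter and self-contained for the Bergman kernel, and it yields $\boldsymbol{c}$ directly as the admissible $\alpha$. It also adapts directly to other spectral projections separated from the rest of the spectrum by a gap of order $p$. The heat-kernel route costs more work, but it also produces the bounds \eqref{cvexpufl0} and \eqref{estsumheatfinal}. The proof of Theorem \ref{th-cov} needs these anyway to establish \eqref{e=sume}.

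\textbf{Points to fix.}
\begin{enumerate}
\item In Step 3 you claim there is an $f$ with $f(0)=1$, $f=0$ beyond the gap, and $|\hat f(t)|\lesssim e^{-c|t|}$. This is false. Exponential decay of $\hat f$ makes $f$ holomorphic in a strip, so vanishing on a half-line forces $f\equiv0$. A smooth cut-off $\chi$ gives only $O(p^{-\infty})$ off-diagonal decay. You abandon this route, so nothing breaks, but the paragraph should be deleted. Note that the heat kernel avoids exactly this obstruction: $e^{-u\lambda^2}$ is not zero on the non-zero spectrum, only exponentially small there.
\item The conjugation error is not a bounded $O(\alpha p)$ operator. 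With $c(\cdot)$ denoting Clifford multiplication, so that $[D_p,f]=c(df)$, it is the first-order operator $-\alpha\sqrt p\,\{D_p,c(d\rho)\}+\alpha^2p\,c(d\rho)^2$. To make it small relative to $z-D_p^2$ you need $\|\nabla(z-D_p^2)^{-1}\|\lesssim p^{-1/2}$, which follows from the Lichnerowicz formula.
\item A simpler option for the previous point is to use $(z-D_p)^{-1}$ on the circle $|z|=\sqrt{\varepsilon p}$. There $e^{\phi}D_pe^{-\phi}=D_p-c(d\phi)$ differs from $D_p$ by a zeroth-order term of norm at most $C\alpha\sqrt p$, while $\|(z-D_p)^{-1}\|\le C/\sqrt p$.
\item Use truncated weights $\alpha\sqrt p\,\min(\rho,R)$ so that all conjugations are bounded operators, and check that the constants are independent of $R$.
\end{enumerate}
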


\section{Covering manifolds}\label{s3.4}

Let $(\wi{X},g^{T\wi{X}})$ be a complete
Riemannian manifold of dimension $m$,
and let $\Gamma$ be a discrete group
acting properly discontinuously and effectively
on $(\wi{X},g^{T\wi{X}})$ by isometries.
For any $r>0$ and any subset
$F\subset\wi{X}$, set
\begin{equation}\label{NF}
N_F(r)=\sup\limits_{x\in F,\,y\in\wi{X}}\#\left\{g\in\Gamma~:~
g.x\in B^{\wi{X}}(y,r)\right\}\;,
\end{equation}
where $B^{\wi{X}}(y,r)$ is the geodesic ball of center $y\in\wi{X}$
and radius $r$.

\begin{lemma}\label{t3.2}
Suppose that there exists $K>0$ such that the Ricci curvature
$\textup{Ric}$ of $(\wi{X},g^{T\wi{X}})$ satisfies
$\textup{Ric}\geq -(m-1)K^2g^{T\wi{X}}$.
Then for any compact subset $F\subset \wi{X}$,
there exists $C_F>0$ such
that for all $r>0$, the following estimate holds,
\begin{equation}\label{NF<exp}
N_F(r)\leq C_F\,e^{(m-1)Kr}\;.
\end{equation}
\end{lemma}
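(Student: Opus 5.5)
The plan is a standard packing argument combined with Bishop--Gromov volume comparison. Fix $x\in F$ and $y\in\wi{X}$, and set $S:=\{g\in\Gamma: g.x\in B^{\wi{X}}(y,r)\}$.

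First, since $\Gamma$ acts properly discontinuously and $F$ is compact, there are $\delta>0$ and $M\in\N$ with the following property: for every $x\in F$, the stabilizer-type set $\{g\in\Gamma: d(g.x,x)<2\delta\}$ has at most $M$ elements. To see this, cover $F$ by finitely many balls $B(x_i,\delta)$ whose $4\delta$-enlargements $B(x_i,4\delta)$ are relatively compact. Each set $\{g: gB(x_i,4\delta)\cap B(x_i,4\delta)\neq\emptyset\}$ is finite, and if $x\in B(x_i,\delta)$ and $d(g.x,x)<2\delta$, then $g$ lies in this finite set. Take $M$ to be the maximum of these finitely many cardinalities.

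Next, the balls $B(g.x,\delta)$ for $g\in S$ are all contained in $B(y,r+\delta)$. Moreover, each point $z$ lies in at most $M$ of them: if $z\in B(g.x,\delta)\cap B(h.x,\delta)$, then $d(h^{-1}g.x,x)<2\delta$, so for fixed $g$ there are at most $M$ possible $h$. Since $\Gamma$ acts by isometries, all these balls have volume $\mathrm{Vol}\,B(x,\delta)$. Summing gives
\begin{equation*}
\#S\cdot \mathrm{Vol}\,B(x,\delta)\leq M\,\mathrm{Vol}\,B(y,r+\delta)\;.
\end{equation*}
The quantity $v_F:=\inf_{x\in F}\mathrm{Vol}\,B(x,\delta)$ is positive by continuity and compactness.

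The remaining, and main, step is a uniform upper bound for $\mathrm{Vol}\,B(y,r+\delta)$ over all $y\in\wi{X}$. By Bishop--Gromov, under $\textup{Ric}\geq-(m-1)K^2g^{T\wi{X}}$ the ball volume is bounded by the volume in the space form of curvature $-K^2$:
\begin{equation*}
\mathrm{Vol}\,B(y,R)\leq \mathrm{vol}(S^{m-1})\int_0^R\Big(\frac{\sinh(Kt)}{K}\Big)^{m-1}dt\leq C\,e^{(m-1)KR}\;,
\end{equation*}
with $C$ depending only on $m$ and $K$. Here completeness of $\wi{X}$ is what makes the comparison valid for every $y$. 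Setting $R=r+\delta$ yields
\begin{equation*}
N_F(r)\leq \frac{M C e^{(m-1)K\delta}}{v_F}\,e^{(m-1)Kr}\;.
\end{equation*}
This bound holds for every $r>0$. For small $r$ no separate argument is needed, since the same inequality applies, so we may take $C_F:=MCe^{(m-1)K\delta}/v_F$.

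The only delicate points are the uniformity of the multiplicity bound $M$ near points of $F$ with nontrivial stabilizer, which is handled by the finite-cover argument above, and the use of the global Bishop--Gromov inequality on the cover itself rather than on the quotient.
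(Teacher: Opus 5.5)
Your proof is correct and is essentially the paper's argument: you pack translates of small balls into $B^{\wi{X}}(y,r+O(1))$, control their overlaps by a finite stabilizer-type count, and then apply Bishop's volume comparison $\mathrm{Vol}\,B^{\wi{X}}(y,R)\leq C_{m,K}\,e^{(m-1)KR}$. The only difference is bookkeeping. You use one radius $\delta$ and one multiplicity bound $M$ for balls centred at the orbit points $g.x$, whereas the paper covers $F$ by balls $B^{\wi{X}}(x_j,\epsilon_{x_j})$ adapted to the stabilizers $\Gamma_{x_j}$, whose translates are disjoint modulo $\Gamma_{x_j}$, so its factor $\#\Gamma_{x_j}$ replaces your $M$ (both arguments tacitly need $m\geq 2$ for the exponential volume bound, which holds in the paper's setting since $m=2n$).
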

\begin{proof}
As the action of
$\Gamma$ on $\wi{X}$ is properly discontinuous, for any $x\in X$, its stabilizer $\Gamma_x\subset\Gamma$
is a finite group, and there exists $\epsilon_x>0$ such that
for any $g\in\Gamma$, we have
\begin{equation}\label{BgB=0}
B^{\wi{X}}(x,4\epsilon_x)\cap g.B^{\wi{X}}(x,4\epsilon_x)=\emptyset
\quad\text{if and only if}\quad g\notin\Gamma_x\,.
\end{equation}
In particular, for any $y\in B(x,2\epsilon_x)$ and $g\notin\Gamma_x$, we have
$B^{\wi{X}}(y,2\epsilon_x)\cap g.B^{\wi{X}}(y,2\epsilon_x)=\emptyset$, which shows that
$\Gamma_y\subset\Gamma_x$. As $F$ is compact, there exists a finite set
$\{x_1,\dots,x_k\}\subset F$ with $k\in\N$ such that
$F\subset\cup_{j=1}^k B^{\wi{X}}(x_j,\epsilon_{x_j})$, and
for any $y\in\wi{X}$ and $1\leq j\leq k$, set
\begin{equation}\label{Njx}
N_{j,y}(r)=\sup\limits_{x\in B^{\wi{X}}(x_j,\epsilon_{x_j})}
\#\left\{g\in\Gamma~:~
g.x\in B^{\wi{X}}(y,r)\right\}\;.
\end{equation}
Then by \eqref{NF}, we have
\begin{equation}
N_F(r)\leq \sup_{y\in\wi{X}}\,\sup_{1\leq j \leq k}N_{j,y}(r).
\end{equation}
Thus by \eqref{Njx}, it suffices to show \eqref{NF<exp} for
$N_{j,y}(r)$, with a constant that does not depend on $y\in\wi{X}$.

Let us first consider the case of $j\in\N$ such that
$\Gamma_{x_j}=\{1\}$. Then
as $\Gamma$ acts by
isometries and using \eqref{BgB=0}, for any $r>0$ we have
\begin{equation}\label{Nvol<vol}
N_{j,y}(r)~\text{Vol}\,B^{\wi{X}}(x_j,\epsilon_{x_j})\leq
\text{Vol}\,B^{\wi{X}}(y,r+2\epsilon_{x_j})\;.
\end{equation}
Now under our assumption on bounded geometry, there exists $K>0$ such that 
the Ricci curvature of $(\wi{X}, g^{T\wi{X}})$ is bounded below by $-(2m-1) K^2 g^{T\wi{X}}$. By Bishop's inequality, this implies that the volume of $B^{\wi{X}}(x,r)\subset\wi{X}$
is smaller or equal to the volume
of a geodesic ball of radius $r>0$ in the space of constant
curvature $-K$ (see for example \cite[Lemma\,7.1.3]{Pet16}).
Then by a classical estimate
for the volume of large balls in the space of constant curvature
$-K$, which can be found for example in \cite[p.\,3]{Mil68},
there exists a universal constant $C_{m,K}>0$, depending only on
$K$ and on the dimension $m$ of $\wi{X}$, such that for any $x\in\wi{X}$ and $r>0$,
\begin{equation}\label{Bishop}
\text{Vol}\,B^{\wi{X}}(x,r)\leq C_{m,K}\,e^{(m-1)Kr}\;.
\end{equation}
Then for any $r>0$ we get,
\begin{equation}
N_{j,y}(r)\leq\frac{C_{m,K}\,e^{(m-1)K2\epsilon_{x_j}}}
{\text{Vol}\,B^{\wi{X}}(x_j,\epsilon_{x_j})}\,e^{(m-1)Kr}\;,
\end{equation}
and the right hand side does not depend on $y\in\wi{X}$.

For the general case, first recall that we chose the cover
of $F$ by geodesic balls as above such that for any $1\leq j\leq k$,
the stabilizer $\Gamma_{x_j}\subset\Gamma$ of $x_j\in F$
satisfies \eqref{BgB=0}. Then as $\Gamma$ acts by isometries,
we know that $\Gamma_{x_j}$ preserves the geodesic
ball $B^{\wi{X}}(x_j,\epsilon_{x_j})$, and using \eqref{BgB=0},
we deduce as above that
\begin{equation}
\frac{N_{j,y}(r)}{\#\Gamma_{x_j}}\,
\text{Vol}\,B^{\wi{X}}(x_j,\epsilon_{x_j})\leq\text{Vol}\,
B^{\wi{X}}(y,r+2\epsilon_{x_j})\;,
\end{equation}
so that
\begin{equation}
N_{j,y}(r)\leq \frac{C_{m,K}\,e^{(m-1)K2\epsilon_{x_j}}\,\#\Gamma_{x_j}}
{\text{Vol}\,B^{\wi{X}}(x_j,\epsilon_{x_j})}\,e^{(m-1)Kr}\;,
\end{equation}
and the right hand side does not depend on $y\in\wi{X}$.
This finishes the proof of \eqref{NF<exp}.
\end{proof}

Let $(\wi{X},\wi{J},g^{T\wi{X}})$ be a complete
Hermitian manifold of complex dimension $n$,
and let $\Gamma$ be a discrete group acting properly discontinuously
and effectively on $(\wi{X},\wi{J},g^{T\wi{X}})$
by biholomorphic isometries.
Then the quotient space $X:=\wi{X}/\Gamma$ has a natural
structure of a Hermitian orbifold $(X,J,g^{TX})$. Furthermore,
 any Cauchy
sequence for the orbifold Riemannian distance $d^X$ of
$(X,g^{TX})$ can be
lifted to a Cauchy sequence for the Riemannian distance
$d^{\wi{X}}$ of
$(\wi{X},g^{T\wi{X}})$, so that the completeness of
$(\wi{X},d^{\wi{X}})$
implies the completeness of $(X,d^X)$. We write
$\pi:\wi{X}\rightarrow X$ for the canonical projection.

Assume that the action of $\Gamma$ lifts
to an action on holomorphic Hermitian vector bundles
$(\wi{L},h^{\wi{L}})$ and $(\wi{E},h^{\wi{E}})$, with
$(\wi{L},h^{\wi{L}})$ a uniformly positive line bundle, so that
its Chern curvature $R^{\widetilde{L}}$ satisfies \eqref{RL>eps}.
Then as $\Gamma$ acts effectively on $\wi{X}$, the
quotient bundles $(L,h^L)$ and $(E,h^E)$ are proper orbifold
bundles on $X$.
For any $p\in\N^*$,
write
$\wi{D}_p,\,D_p$ for the Dirac operators of
$\wi{L}^p\otimes \wi{E}$, $L^p\otimes E$
as in \eqref{lm2.1},
and $\wi{P}_p(\cdot,\cdot)$, $P_p(\cdot,\cdot)$
for the associated Bergman kernels with respect to the
Riemannian volume forms $dv_{\wi{X}}$, $dv_{X}$ of
$(\wi{X},g^{T\wi{X}})$, $(X,g^{TX})$ as in \eqref{lm2.01a}.

\begin{theorem}\label{th-cov}
Suppose that $(\wi{X},g^{T\wi{X}})$, $(\wi{L},h^{\wi{L}})$ and $(\wi{E},h^{\wi{E}})$
have bounded geometry in the sense of Definition \ref{bndedgeomdef}
and that \eqref{RL>eps} holds.
Then if the volume of $X:=\widetilde{X}/\Gamma$ is finite,
there exists $p_1\in\N^*$ such that for any
$p\geq p_1$, we have
\begin{equation}\label{p=sump}
\sum_{g\in\Gamma}(g^{-1},1).\wi{P}_p(g.\wi{x},\wi{y})
=P_p(\pi(\wi{x}),\pi(\wi{y}))\;,
\end{equation}
where for any compact set of $K\subset\wi{X}$,
the convergence is uniform and absolute in $\wi{x}, \wi{y}\in K\times\wi{X}\cup\wi{X}\times K$.
\end{theorem}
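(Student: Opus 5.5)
Our approach has three steps. First we show that the series converges and that its sum is a holomorphic, $\Gamma$-invariant $L^2$ object. Then we show it is a reproducing kernel for $H^0_{(2)}(X,L^p\otimes E)$. Finally, uniqueness of the Bergman kernel gives \eqref{p=sump}.

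\emph{Convergence.} Fix a compact $K\subset\wi X$ and $\wi x\in K$. We sort the terms by the distance $d(g.\wi x,\wi y)$ and use shells $k\le d(g.\wi x,\wi y)<k+1$. Lemma \ref{t3.2} applies because bounded geometry bounds the Ricci curvature from below. It shows that shell $k$ contains at most $N_K(k+1)\le C_K e^{(m-1)K(k+1)}$ elements $g$. By Theorem \ref{thm:3.2new23}, each term in that shell is bounded by $C_0p^ne^{-\boldsymbol c\sqrt p\,k}$. Now choose $p_1$ so that $\boldsymbol c\sqrt{p}>(m-1)K+1$ for $p\ge p_1$. Then
\[
\sum_{g}\big|\wi P_p(g.\wi x,\wi y)\big|_{\cC^k}\le C\,p^{n+k/2}\sum_{j\ge0}e^{((m-1)K-\boldsymbol c\sqrt p)j}<\infty,
\]
uniformly for $\wi x\in K$ and $\wi y\in\wi X$. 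The case $\wi y\in K$ is symmetric: replace $g$ by $g^{-1}$ and use the isometry invariance $d(g.\wi x,\wi y)=d(\wi x,g^{-1}\wi y)$. Since the $\cC^k$ bounds also give uniform convergence of derivatives, the sum $Q_p(\wi x,\wi y)$ is smooth. It is holomorphic in $\wi x$ and $\Gamma$-invariant in each variable, so it descends to a section $Q_p(x,y)$ over $X\times X$.

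\emph{Reproducing property.} Let $s\in H^0_{(2)}(X,L^p\otimes E)$, and let $\wi s=\pi^*s$ be its lift. Let $U\subset\wi X$ be a fundamental domain. The key step is to unfold the integral:
\[
\int_X Q_p(x,y)s(y)\,dv_X(y)=\int_U\sum_g(g^{-1},1).\wi P_p(g\wi x,\wi y)\wi s(\wi y)\,dv=\int_{\wi X}\wi P_p(\wi x,\wi y)\wi s(\wi y)\,dv(\wi y).
\]
Here we use $\Gamma$-invariance of $\wi s$ and change variables $\wi y\mapsto g^{-1}\wi y$. Fubini must be justified: because $\mathrm{Vol}(X)<\infty$ and $s\in L^2$, we have $s\in L^1(X)$. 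The bound $\sup_{\wi y}\sum_g|\wi P_p(g\wi x,\wi y)|<\infty$ from the first step then gives absolute integrability.

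This unfolding is the main obstacle. The lifted section $\wi s$ is not in $L^2(\wi X)$, so the reproducing identity $\int\wi P_p\wi s=\wi s$ does not apply directly. To get around this, we show that $\wi P_p$ reproduces bounded holomorphic sections of $\wi X$. Note that $\wi s$ is bounded only if $s$ is. In general, we use that $\int_{\wi X}|\wi P_p(\wi x,\cdot)||\wi s|$ converges by the computation above. We then apply the reproducing identity to the truncations $\chi_R\wi s$, where $\chi_R$ is a cutoff at radius $R$. Writing $\wi P_p(\chi_R\wi s)=\chi_R\wi s+\wi P_p(\ldots)$, the commutator term is controlled by the off-diagonal decay of Theorem \ref{thm:3.2new23}. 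An alternative is to note that the sum $\sum_g$ over a fundamental domain reduces matters to showing that the operator with kernel $Q_p$ acts on $L^2(X)$ as a bounded operator by Schur's test, using the uniform $L^1$ bounds above. One then verifies that it is self-adjoint, idempotent and has range in the holomorphic sections, again by unfolding against compactly supported test sections, where Fubini is harmless. Through either route, $Q_p s=s$ for holomorphic $s$.

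\emph{Conclusion.} We also need $Q_p s=0$ for $s\perp H^0_{(2)}$. Since $Q_p(x,\cdot)$ is holomorphic in $x$ and symmetric, $Q_p(\cdot,y)\in H^0_{(2)}$, so $Q_p$ is a bounded self-adjoint operator into $H^0_{(2)}$ that restricts to the identity there. Hence $Q_p=P_p$, which proves \eqref{p=sump}.
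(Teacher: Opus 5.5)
Your convergence estimate is fine, the unfolding with Fubini is justified as you say, and so is your final step: a bounded self-adjoint operator with range in $H^0_{(2)}(X,L^p\otimes E)$ that fixes $H^0_{(2)}$ must be $P_p$. The gap is the step you yourself flag, and it is the heart of the theorem. You need $\int_{\wi X}\wi P_p(\wi x,\wi y)\wi s(\wi y)\,dv_{\wi X}(\wi y)=\wi s(\wi x)$ for the lift $\wi s=\pi^*s$, which is not in $L^2(\wi X)$.

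Off-diagonal decay of $\wi P_p$ only gives, by dominated convergence, $\wi P_p(\chi_R\wi s)(\wi x)\to\int_{\wi X}\wi P_p(\wi x,\cdot)\,\wi s$. It says nothing about whether this limit equals $\wi s(\wi x)$. Writing $\wi P_p(\chi_R\wi s)-\chi_R\wi s$ as a commutator $[\wi P_p,\chi_R]\wi s$ already presupposes $\wi P_p\wi s=\wi s$, so that argument is circular.

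What you actually need is $(1-\wi P_p)(\chi_R\wi s)(\wi x)\to 0$. Let $\wi D_p^{-1}$ denote the inverse on $(\Ker \wi D_p)^\perp$. Then $(1-\wi P_p)(\chi_R\wi s)=\wi D_p^{-1}\wi D_p(\chi_R\wi s)$, where $\wi D_p(\chi_R\wi s)=\sqrt2\,(\overline\partial\chi_R)\,\wi s$ is supported in an annulus. Two ingredients are therefore required:
\begin{itemize}
\item exponential off-diagonal decay, at a rate beating the volume growth, for the kernel of $\wi D_p^{-1}$ on $(0,1)$-forms (or weighted H\"ormander $L^2$ estimates for $\overline\partial$);
\item the growth bound $\int_{B(\wi x_0,R)}|\wi s|\,dv_{\wi X}\leq N_{\{\wi x_0\}}(R)\,\|s\|_{L^1(X)}$, which follows from Lemma \ref{t3.2}.
\end{itemize}
Neither ingredient is Theorem \ref{thm:3.2new23}, which controls only the Bergman kernel itself.

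Your alternative route does not close the gap either. Being self-adjoint and idempotent with holomorphic range only makes $Q_p$ the orthogonal projection onto \emph{some} closed subspace of $H^0_{(2)}(X,L^p\otimes E)$; the zero operator qualifies. This gives $Q_p\leq P_p$, not $Q_ps=s$.

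The paper avoids the problem by first proving the averaging identity \eqref{e=sume} for the heat kernel $\exp(-uD_p^2)$. The Gaussian bounds \eqref{cvexpufl0}, Lemma \ref{t3.2} and $\mathrm{Vol}(X)<\infty$ show that the averaged kernel gives $L^2(X)$ solutions of the heat equation with the correct initial data. Uniqueness of the heat semigroup on $X$ then yields \eqref{e=sume}. The Bergman identity follows by letting $u\to\infty$, using the spectral gap, \eqref{e-p=inte} and \eqref{estsumheatfinal}. In that argument the operators on $\wi X$ are only ever applied to compactly supported sections, so the non-$L^2$ issue never arises.

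To rescue your approach, either switch to this heat-kernel route, or prove the missing decay estimate for $\wi D_p^{-1}$ off the kernel (for instance by the finite-propagation-speed method of \cite{MM15}) and then run the truncation argument as described above.
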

\begin{proof}
First note that if $\Gamma$ acts freely on $\wi{X}$, then the quotient $E=\wi{E}/\Gamma$ identifies
$v\in\wi{E}_{\wi{x}}$ with $g.v\in\wi{E}_{g.\wi{x}}$ for any $x\in\wi{X}$ and $g\in\Gamma$,
so that we can omit the action of $(g^{-1},1)$ on 
$(\wi{L}^p\otimes \wi{E})_{g.\wi{x}}\otimes(\wi{L}^p\otimes \wi{E})_{\wi{y}}$ in \eqref{p=sump}. Recall that the Dirac
operators $D_p$, $\wi{D}_p$ defined in \eqref{lm2.1} acts on the
smooth sections of
\begin{equation}
    E_p:=\Lambda^\bullet(T^{*(0,1)}X)\otimes L^p\otimes E\quad\text{and}\quad \wi{E}_p:=\Lambda^\bullet(T^{*(0,1)}\wi{X})\otimes \wi{L}^p\otimes \wi{E}\,.
\end{equation}
In general,
using \cite[Lemma\,1, Remark\,1]{MM15} as in the proof of
\cite[Theorem 6.1.1]{MM07},
the assumption on the boundedness of
$R^{\wi{L}},\,R^{\wi{E}},\,\wi{J},\,g^{T\wi{X}}$ implies that
\begin{equation}
\begin{split}
\textup{Spec}(D_p^2)&\subset\{0\}\cup[2p\mu_0-C_1,+\infty[\;,\\
\textup{Spec}(\wi{D}_p^2)&\subset\{0\}\cup[2p\mu_0-C_1,+\infty[\;,
\end{split}
\end{equation}
with
\begin{align}\label{mu0}
\mu_{0}=\inf_{w\in T^{(1, 0)}_{x}X,\ x\in X}
\frac{R^{L}(w, \overline{w})}{|w|^{2}_{g^{TX}}}>\varepsilon
\end{align}
for some $C_{1}>0$ and $\varepsilon>0$ as in \eqref{RL>eps}, and that
for $p\in\N^*$ large enough, we have
\begin{equation}
\begin{split}
\textup{Ker}(D_p^2)&=H^0_{(2)}(X,L^p\otimes E)\;,\\
\textup{Ker}(\wi{D}_p^2)&=H^0_{(2)}(\wi{X},\wi{L}^p\otimes \wi{E})\;.
\end{split}
\end{equation}
Let us first show that for any $p\in\N$ and $u>0$,
the following formula as in \cite[(3.18)]{MM15} holds,
\begin{equation}\label{e=sume}
\exp(-u D_p^2)(\pi(\wi{x}),\pi(\wi{y}))=\sum_{g\in\Gamma}
(g^{-1},1).\exp(-u\wi{D}_p^2)(g.\wi{x},\wi{y})\;,
\end{equation}
with uniform convergence for $\wi{x}\in\wi{X}$ and $\wi{y}$ in any
compact set $F\subset\wi{X}$.

For any
$p\in\N$ fixed, by applying \cite[Theorem 4]{MM15} for
$L=\C$ the trivial line bundle, we get
$a>0$ such that for any $k\in\N$, $u_0>0$, there exists $C>0$ such that for any
$0<u<u_0$ and $\wi{x},\,\wi{y}\in\wi{X}$, we have
\begin{equation}\label{cvexpufl0}
\left|\exp\left(-u\wi{D}_{p}^2\right)(\wi{x},\wi{y})\right|_{\cC^k}
\leqslant C u^{-n-\frac{k}{2}}
\exp\left(-\frac{a}{u}d^{\wi{X}}(\wi{x},\wi{y})^2\right)\;.
\end{equation}
Thus by Lemma \ref{t3.2}, for any $k\in\N$, $u_0>0$ and
any compact set $F\subset\wi{X}$ , there exists
$C>0$ such that for all $0<u<u_0$, $\wi{x}\in\wi{X}$ and all $\wi{y}\in F$, we have
\begin{equation}\label{explicitunifcv}
\begin{split}
\sum_{g\in\Gamma}\,\left|\exp\left(-u\wi{D}_{p}^2\right)(g.\wi{x},\wi{y})\right|_{\cC^k}
&\leq C\sum_{g\in\Gamma}u^{-n-\frac{k}{2}}
\exp\left(-\frac{a}{u}d^{\wi{X}}(\wi{x},g^{-1}.F)^2\right)\\
&\leq C\,\sum_{r=0}^\infty\,u^{-n-\frac{k}{2}}\,N_F(r+1)\,e^{-\frac{a}{u}\,r^2}<+\infty\,.
\end{split}
\end{equation}
This shows that for any compact set $F\subset\wi{X}$ and any compact
interval $I\subset\,]0,+\infty[$, the right hand side
of \eqref{e=sume} and all its derivatives converge uniformly in
$\wi{x}\in\wi{X}$, $\wi{y}\in F$ and $u\in\,I$.
Identifying an open dense set $U\subset X$
with an open fundamental domain $\wi{U}$ on $\Gamma$ in $\wi{X}$,
this implies in particular that
for any $s\in\cC^{\infty}_0(X,E_p)$,
by considering the compact subset $F:=\overline{\pi^{-1}(\supp\,s)\cap\wi{U}}$, the sum
\begin{equation}\label{sum=eL2}
\sum_{g\in\Gamma}\big(e^{-u\wi{D}^2_p}s\big)(g.x)
\end{equation}
converges uniformly in $x\in X$ and $u\in\,I$ as well as all its derivatives,
where we wrote
\begin{equation}\label{eL2}
\big(e^{-u\wi{D}^2_p}s\big)(g.x)
:=\int_{\wi{U}}
(g^{-1},1).\exp(-u\wi{D}_p^2)(g.\wi{x},\wi{y})s(\wi{y})dv_{\wi{X}}(\wi{y})\,,
\end{equation}
with $\wi{x}\in\overline{\wi{U}}$ such that $\pi(\wi{x})=x$.
As the sum \eqref{sum=eL2} does not depend on the choice of
fundamental domain $\wi{U}\subset\wi{X}$, it defines
a smooth section in $\cC^\infty(X,E_p)$.
By the defining property of
heat operators, for all $x\in X$ and $u>0$, we get
\begin{equation}\label{defpropheat}
\left(\frac{\partial^2}{\partial u^2}+D_p^2\right)
\sum_{g\in\Gamma}\big(e^{-u\wi{D}^2_p}s\big)(g.x)=0\;.
\end{equation}
Thus by uniqueness of the heat operator,
to show \eqref{e=sume}, it suffices to show that
\eqref{sum=eL2} belongs to $L^2(X,E_p)$
 and that
\begin{equation}\label{expfl0}
\int_X\big|\sum_{g\in\Gamma}\big(e^{-u\wi{D}^2_p}s\big)(g.x)
-s(x)\big|^2\,dv_X(x)\xrightarrow{~~u\rightarrow 0~~}0\,,
\end{equation}
for any $s\in\cC^{\infty}_0(X,E_p)$.
To show that \eqref{sum=eL2} belongs to $L^2(X,E_p)$,
note that for any $\wi{x}\in\wi{U}$ and considering the compact subset
$F:=\overline{\pi^{-1}(\supp\,s)\cap\wi{U}}$, we have
\begin{equation}\label{sumfin}
\begin{split}
\big|\sum_{g\in\Gamma}\big(e^{-u\wi{D}^2_p}s\big)&(g.x)
\big|\\
&\leq \sup\limits_{y\in F}\sum_{g\in\Gamma}\left|\exp\left(-u\wi{D}_{p}^2\right)(g.\wi{x},\wi{y})\right|
\left(\int_{F}\,|s(\wi{y})|\,dv_{\wi{X}}(\wi{y})\right)\,.
\end{split}
\end{equation}
Together with \eqref{explicitunifcv} and the assumption that $\textup{Vol}(X)<+\infty$, this implies that
\eqref{sum=eL2} belongs to $L^2(X,E_p)$. 

To establish \eqref{expfl0}, note that using a partition of unity, we can always
assume either that the support of $s\in\cC^{\infty}_0(X,E_p)$
is contained in the interior of $U\subset X$, or either that it is
contained in an orbifold chart. Assuming first that it is contained
in the interior of $U$, we get as above that
\begin{equation}\label{sumetends}
\begin{split}
&\int_X\big|\sum_{g\in\Gamma}\big(e^{-u\wi{D}^2_p}s\big)(g.x)
-s(x)\big|^2\,dv_X(x)\\
&\leq\text{Vol}(X)
\Big(\sup\limits_{x\in X}\sum_{g\in\Gamma,\,g\neq 1}
\big|\big(e^{-u\wi{D}^2_p}s\big)(g.x)
\big|+
\sup\limits_{x\in X}\big|\big(e^{-u\wi{D}^2_p}s\big)(x)-s(x)\big|
\Big)^2\,.\\
\end{split}
\end{equation}
As $F:=\overline{\pi^{-1}(\supp\,s)\cap\wi{U}}$ is contained in the interior of $\wi{U}$, there exists
$b>0$ such that for any $\wi{x}\in\wi{U}$ and $g\in\Gamma\backslash\{1\}$, we have $b<d^{\wi{X}}(\wi{x},g^{-1}.F)$. Then \eqref{explicitunifcv} shows that the
first term of the right hand side of \eqref{sumetends} is bounded by
$$C\,\sum_{r=1}^\infty\,u^{-n}\,N_F((r+1)b)\,e^{-\frac{a}{u}\,r^2 b^2},$$ 
which
tends to
$0$ as $u\rightarrow 0$ by Lemma \ref{t3.2}, while by uniform convergence on compact sets
of the heat equation on $\wi{X}$
to its initial solution as
$u\rightarrow 0$, the
second term also tends to $0$
as $u\rightarrow 0$ by \eqref{eL2}. This establishes \eqref{expfl0} in case
$\supp(s)\subset X$ is contained in the interior of $U$.

Assume now that the support of  $s\in\cC^{\infty}_0(X,E_p)$ is contained
in an orbifold chart
$B^{\wi{X}}(\wi{x_0},\epsilon)\rightarrow
B^X(x_0,\epsilon)=B^{\wi{X}}(\wi{x_0},\epsilon)/\Gamma_{x_0}$,
where $\Gamma_{x_0}\subset\Gamma$ is the finite
stabilizer of $\wi{x_0}\in\wi{X}$ and where
$B^{\wi{X}}(\wi{x_0},\epsilon)\cap g.B^{\wi{X}}(\wi{x_0},\epsilon)=\emptyset$
for all $g\notin\Gamma_{x_0}$ by \eqref{BgB=0}.
By definition \cite[(1.5)]{Ma05} of the orbifold integral
and by $\Gamma$-equivariance of the heat kernel on $\wi{X}$,
writing $\Gamma_{x_0}\backslash\Gamma$ for the set of cosets
\begin{equation}
[g]:=\{g_0g\in\Gamma:g_0\in\Gamma_{x_0}\}\,,
\end{equation}
for all $g\in\Gamma$ and writing $\wi{s}\in\cC^{\infty}_0(\wi{X},\wi{E}_p)$
for the $\Gamma_{x_0}$-invariant lift of
$s\in\cC^{\infty}_0(X,E_p)$ to
$B^{\wi{X}}(\wi{x_0},\epsilon)$,
for any $x\in B^X(x_0,\epsilon)$
and $\wi{x}\in B^{\wi{X}}(\wi{x_0},\epsilon)$ satisfying $\pi(\wi{x})=x$,
we get
\begin{equation}\label{orbisum}
\begin{split}
\sum_{g\in\Gamma}\big(e^{-u\wi{D}^2_p}s\big)(g.x)
&=\frac{1}{\#\Gamma_{x_0}}\int_{B^{\wi{X}}(\wi{x_0},\epsilon)}
\sum_{g\in\Gamma}
\exp(-u\wi{D}_p^2)(g.\wi{x},\wi{y})s(\wi{y})dv_{\wi{X}}(\wi{y})\\
&=\frac{1}{\#\Gamma_{x_0}}\int_{B^{\wi{X}}(\wi{x_0},\epsilon)} \sum_{[g]\in\Gamma_{x_0}
\backslash\Gamma}
\sum_{g_1\in [g]}\exp(-u\wi{D}_p^2)(g_1.\wi{x},\wi{y})\wi{s}(\wi{y})dv_{\wi{X}}(\wi{y})\\
&=: \sum_{[g]\in\Gamma_{x_0}
\backslash\Gamma}\big(e^{-u\wi{D}^2_p}\wi{s}\big)(g.\wi{x})\,,
\end{split}
\end{equation}
 Then the last line of
\eqref{orbisum} can be seen as the $\Gamma_{x_0}$-invariant lift
of the left hand side of \eqref{orbisum} to
$B^{\wi{X}}(\wi{x_0},\epsilon)$. By definition of the
orbifold integral and using
$\supp(s)\subset B^X(x_0,\epsilon)$, we then get
\begin{multline}\label{orbiL2norm}
\int_X\big|\sum_{g\in\Gamma}\big(e^{-u\wi{D}^2_p}s\big)(g.x)
-s(x)\big|^2\,dv_X(x)\\
=\int_{X\backslash B^X(x_0,\epsilon)}
\big|\sum_{g\in\Gamma}\big(e^{-u\wi{D}^2_p}s\big)(g.x)\big|^2
\,dv_X(x)\\
+\frac{1}{\#\Gamma_{x_0}}
\int_{B^{\wi{X}}(\wi{x_0},\epsilon)}\Big|
\sum_{[g]\in\Gamma_{x_0}
\backslash\Gamma}\big(e^{-u\wi{D}^2_p}\wi{s}\big)(g.\wi{x})
-\wi{s}(\wi{x})\Big|^2\,dv_{\wi{X}}(\wi{x})\,.
\end{multline}
We can then repeat the estimate \eqref{sumetends} with each term
of the right
hand side of \eqref{orbiL2norm} instead, and get \eqref{expfl0}
from \eqref{cvexpufl0} as above.
This concludes the proof of \eqref{e=sume}.

Now using \cite[Theorem 4]{MM15} again, we know that there exists $a>0$
such that for any $u_0>0$, $k\in\N$,
there exists $C>0$ such that for any $u\geq u_0,\,p\in\N^*$
and $\wi{x},\,\wi{y}\in\wi{X}$, we have
\begin{equation}\label{estsumheatfinal}
\left|\exp\left(-\frac{u}{p}\wi{D}_p^2\right)(\wi{x},\wi{y})\right|_{\cC^k}
\leqslant C p^{n+\frac{k}{2}}
\exp\left(\mu_0 u-\frac{ap}{u}d^{\wi{X}}(\wi{x},\wi{y})^2\right)\;.
\end{equation}
Then following \cite[(4.2.22)]{MM07}, for any $p\in\N^*,\,u>0$, we have
\begin{equation}\label{e-p=inte}
\begin{split}
\exp\left(-\frac{u}{p} D_p^2\right)-P_p
&=\int_u^{\infty}\frac{1}{p}D_p^2
\exp\left(-\frac{u_{1}}{p}D_p^2\right)du_{1},
\\
\exp\left(-\frac{u}{p}\wi{D}_p^2\right)-\wi{P}_p&=
\int_u^{\infty}\frac{1}{p}
\wi{D}_p^2\exp\left(-\frac{u_{1}}{p}\wi{D}_p^2\right)du_{1}.
\end{split}
\end{equation}
Now by Theorem \ref{thm:3.2new23} and Lemma \ref{t3.2}, we know that
$\sum_{g\in\Gamma}\,(g^{-1},1).\wi{P}_p(g.\wi{x},\wi{y})$
is absolutely convergent with respect in $\cC^k$-norm of $\wi{X}\times K$ for any compact subset $K\subset\wi{X}$,
uniformly in $p>\max\{p_0,(m-1)^2K^2/\textbf{c}^2\}$, so that we get \eqref{p=sump} from \eqref{e=sume},
\eqref{estsumheatfinal} and
\eqref{e-p=inte} as in \cite[(3.20),\,(3.21)]{MM15}.
\end{proof}

\begin{remark}
Since the general version of Theorem \ref{thm:3.2new23}
established in \cite[Theorem\,1]{MM15} holds in the general symplectic
case, the proof of Theorem \ref{th-cov} extends verbatim to the
general symplectic case, thus extending \cite[Theorem\,2]{MM15} to
the case of manifolds with finite volume.
\end{remark}


\section{Bohr-Sommerfeld submanifolds}\label{BSsec}

Let $(X,J,g^{TX})$ be a complete Hermitian orbifold of complex
dimension $n$, together with a positive
holomorphic Hermitian proper 
orbifold line bundle $(L,h^L)$ satisfying \eqref{RL>eps}
and a holomorphic Hermitian proper 
orbifold vector bundle $(E,h^E)$. We set
\begin{equation}\label{preqBS}
    \om:=\frac{\sqrt{-1}}{2\pi}R^L\,.
\end{equation}
In this section, we recall the setting and results of \cite{Ioo18b}, which
are going to be the basic tools for the sequel.

Recall that an immersed submanifold $\iota:\Lambda\rightarrow X$
is said to be \emph{isotropic} if $\iota^*\om=0$. Let
$|\cdot|_{L}$ and $\nabla^{L}$ be the norm and connection
induced on the pullback $\iota^*L$ over $\Lambda$
by the Hermitian metric and Chern connection of $(L,h^L)$.
Note that by \eqref{preqBS}, the condition
$\iota^*\om=0$ implies that $\nabla^{L}$ is \emph{flat} over
$\Lambda$. This observation motivates the following definition.

\begin{definition}\label{BS}
A properly immersed oriented isotropic submanifold
$\iota:\Lambda\rightarrow X$ is said to satisfy the
\emph{Bohr-Sommerfeld condition} for $(L,h^L)$ if there exists a
non-vanishing smooth section
$\zeta\in\cC^{\infty}(\Lambda,\iota^* L)$
satisfying
\begin{equation}\label{nabs=0}
\nabla^{L} \zeta=0\;.
\end{equation}
Taking $\zeta$ such that $|\zeta(x)|_{L}=1$ for any $x\in\Lambda$,
the data of $(\Lambda,\iota,\zeta)$ is called a
\emph{Bohr-Sommerfeld submanifold} of $(X,L)$.
\end{definition}

As discussed in \cite[\S\,5.2]{Ioo18b}, in the case when
$X$ is an orbifold,
we consider \emph{singular immersions} $\iota:\Lambda\rightarrow X$
in the sense of
\cite[Def.\,5.6]{Ioo18b}, meaning that $\iota$ is a continuous map
from a smooth manifold $\Lambda$ into $X$, immersive on the smooth
locus of $X$ and such that the preimage of $\iota(\Lambda)$ on
orbifold charts is a union of cleanly intersecting submanifolds,
so that each pairwise intersection is also a submanifold whose tangent
space coincides with the intersection of the tangent spaces of the intersecting submanifolds.
Then the notion of a Bohr-Sommerfeld manifold extends tautologically
in this context.

Let $K_X:=\det(T^{*(1,0)}X)$ be the canonical line
bundle of $X$, and let $h^{K_X}$ be equal to
the Hermitian metric induced by $g^{TX}$. Denote by
$R^{K_{X}}$ the curvature of the Chern connection on 
$(K_{X}, h^{K_{X}})$. The following basic result can
found for instance in \cite[Lemma 6.1]{Ioo18b}.

\begin{lemma}
\label{BSgeod}
Assume $(K_X,h^{K_X})$ is positive, $\dim_\C X=1$ and that
$\frac{\sqrt{-1}}{2\pi} R^{K_X}=
c\,g^{TX}(J\cdot,\cdot)$
for some $c>0$. Let $\gamma:S^1\rightarrow X$ be a closed
geodesic loop of $X$.
Then $\gamma$ satisfies the Bohr-Sormmerfeld
condition for $(K_X,h^{K_X})$.

Furthermore, if $(L,h^L)$ is a holomorphic Hermitian line bundle
satisfying $(L^m,h^{L^m})\simeq (K_X,h^{K_X})$ for some
$m\in\N^*$,
then $\gamma^m:=\gamma\circ m:S^1\rightarrow X$
satisfies the Bohr-Sormmerfeld condition for $(L,h^L)$, where
$m:S^1\rightarrow S^1$ is the standard $m$-cover of $S^1$.
%
\end{lemma}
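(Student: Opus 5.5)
The plan is to reduce the Bohr–Sommerfeld condition for $\gamma$ to a computation of the holonomy of the Chern connection of $(K_X,h^{K_X})$ along $\gamma$. The curve $\gamma$ has real dimension one, so it is automatically isotropic: $\gamma^*\om$ is a $2$-form on $S^1$, hence zero. By Definition \ref{BS}, it therefore suffices to produce a parallel, nowhere-vanishing section of $\gamma^*K_X$ over $S^1$, which is equivalent to the holonomy of $\gamma^*\nabla^{K_X}$ around $S^1$ being trivial.

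To build the section, I would use the geodesic itself. Let $\dot\gamma$ be the velocity field, normalized to unit length, and let $\dot\gamma^{1,0}:=\frac12(\dot\gamma-\sqrt{-1}J\dot\gamma)$ be its $(1,0)$-part. Since $\dim_\C X=1$, the bundle $T^{(1,0)}X$ is a line bundle, and $\dot\gamma^{1,0}$ is a nowhere-vanishing section of $\gamma^*T^{(1,0)}X$. Let $\zeta\in\cC^\infty(S^1,\gamma^*K_X)$ be its dual, defined by $\zeta(\dot\gamma^{1,0})=1$, so $\zeta$ is a well-defined, nowhere-vanishing section of $\gamma^* K_X=\gamma^*T^{*(1,0)}X$.

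Next I would check that $\zeta$ is parallel. The hypothesis $\frac{\sqrt{-1}}{2\pi}R^{K_X}=c\,g^{TX}(J\cdot,\cdot)$ says that the metric is Kähler; in complex dimension one, with $h^{K_X}$ induced by $g^{TX}$, every Hermitian metric is Kähler in any case. Consequently the Chern connection on $T^{(1,0)}X\simeq (TX,J)$ coincides with the Levi-Civita connection, and the induced connection on the dual $K_X$ is the Chern connection of $h^{K_X}$. The geodesic equation $\nabla^{TX}_{\dot\gamma}\dot\gamma=0$, together with $\nabla J=0$, gives $\nabla_{\dot\gamma}\dot\gamma^{1,0}=0$, and hence $\nabla^{K_X}\zeta=0$. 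Normalizing, $|\zeta|$ is constant, so $\zeta/|\zeta|$ is the required unit parallel section. This proves the first claim.

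For the second claim I would pass to roots of the line bundle. Fix an isometric isomorphism $L^m\simeq K_X$; it intertwines the Chern connections. Let $\eta$ be any local unit section of $\gamma^*L$ along the lift to $\R$. Parallel transport of $\eta$ around the loop $\gamma$ multiplies it by a holonomy $e^{\sqrt{-1}\theta}$. The section $\eta^{\otimes m}$ of $K_X$ then has holonomy $e^{\sqrt{-1}m\theta}$, which must equal $1$ by the first part, since the holonomy of $K_X$ along $\gamma$ is trivial. The holonomy of $L$ along $\gamma^m$, which traverses $\gamma$ $m$ times, is $e^{\sqrt{-1}m\theta}=1$. A parallel unit section of $(\gamma^m)^*L$ therefore exists, which gives the Bohr–Sommerfeld condition for $\gamma^m$.

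The main subtlety is the identification of the Chern connection of $K_X$ with the connection dual to Levi-Civita; the step where this is used is the verification that $\nabla^{K_X}\zeta=0$, and it rests on the Kähler property.

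A second point needs care in the orbifold setting. Near orbifold points, $\gamma$ should be understood as a singular immersion, and the argument should be run in orbifold charts. There it goes through verbatim, because geodesics lift to geodesics and all the constructions are local and invariant.
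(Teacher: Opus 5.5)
Your proposal is correct and follows the paper's proof. The unit parallel section of $\gamma^*K_X$ is built from $\dot\gamma$; it is parallel because the Chern connection of $(K_X,h^{K_X})$ is induced by the Levi-Civita connection and $\nabla^{TX}_{\dot\gamma}\dot\gamma=0$; and the claim for $\gamma^m$ follows from the trivial holonomy of $L^m\simeq K_X$ along $\gamma$. Your version is more explicit (dualizing $\dot\gamma^{1,0}$, computing the holonomy $e^{\sqrt{-1}m\theta}=1$). In the orbifold case the paper adds one step you only assert: since isotropy groups act by isometries, the lifts of $\gamma$ in orbifold charts are cleanly intersecting geodesic arcs, so $\gamma$ is a singular immersion in the required sense.
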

\begin{proof}
First recall that on a Riemannian orbifold, the finite isotropy group
of an orbifold chart acts by isometries, so that
a geodesic loop $\gamma:S^1\rightarrow X$ indeed defines
a singular immersion in the sense of \cite[Def.\,5.6]{Ioo18b}, i.\,e.,
the preimage of $\gamma$ in orbifold charts is a union of cleanly
intersecting subamnifolds.

Set now $\om=\frac{\sqrt{-1}}{2\pi}R^{K_X}$.
Recall that for $\om=c\,g^{TX}(J\cdot,\cdot)$ and for
$h^{K_X}$ induced by $g^{TX}$ up to a multiplicative constant, the
Chern connection $\nabla^{K_X}$ of $(K_X,h^{K_X})$ is induced by
the Levi-Civita connection $\nabla^{TX}$ on $TX$.
For any $\theta\in S^1$, let $\dot\gamma_\theta\in
T_{\gamma(\theta)}X$ be
the vector tangent to the curve $\gamma:S^1\rightarrow X$, and
write $\zeta\in\cC^{\infty}(S^1,\gamma^*K_X)$ for the induced
section of $K_X$ over $S^1$.
As $\gamma$ is geodesic, we know that
$\nabla^{TX}_{\dot\gamma}\dot\gamma=0$, so that
$\nabla^{K_X}_{\dot\gamma}\zeta=0$. Thus $\zeta$ satisfies
\eqref{nabs=0} and $\gamma:S^1\rightarrow X$
satisfies the Bohr-Sommerfeld condition for $(K_X,h^{K_X})$.


Let now $(L,h^L)$ be such that
$(L^m,h^{L^m})\simeq (K_X,h^{K_X})$ for some $m\in\N^*$.
The fact that $\gamma:S^1\rightarrow X$ satisfies
the Bohr-Sommerfeld
condition for $(K_X,h^{K_X})$ is equivalent to the fact that
the connection induced by $\nabla^{K_X}$ on $\gamma^*K_X$
is trivial along $S^1$. This implies in particular that
the holonomy of the Chern connection $\nabla^L$ of $(L,h^L)$ is trivial when pulled back on
the standard $m$-cover of $S^1$. This proves the second assertion.
\end{proof}

If $\zeta$ is any section of $\iota^*L$,
then we write $\zeta^p$ for the $p$-th power of $\zeta$ defined as
a section of $\iota^*L^p$. If additionally $f$ is a section of
$\iota^*E$, then we write $\zeta^pf$ for the induced tensor product in
$\iota^*(L^p\otimes E)$.
Let $(\Lambda,\iota,\zeta)$ be a compact
Bohr-Sommerfeld submanifold of $(X,L)$,
and consider a section $f\in\cC^{\infty}(\Lambda,\iota^*E)$.

\begin{definition}\label{Lagstate}
The \emph{isotropic state} associated with $(\Lambda,\iota,\zeta)$ and $f$
is the family of sections
$\{s_{\Lambda,p}\in H^0_{(2)}(X,L^p\otimes E)\}_{p\in\N^*}$ 
defined for any $x\in X$ by the formula
\begin{equation}\label{5.3}
s_{\Lambda,p}(x)=\int_{\Lambda} P_p(x,\iota(y))
.\zeta^pf(y)dv_{\Lambda}(y)\;,
\end{equation}
where $dv_\Lambda$ is the Riemannian volume form of
$(\Lambda,\iota^*g^{TX})$.
\end{definition}

The following proposition gives a characterization of isotropic
states in term of their reproducing properties.

\begin{proposition}\label{proprepgal}
For any $p\in\N^*$, the section
$s_{\Lambda,p}\in H^0_{(2)}(X,L^p\otimes E)$
of Definition \ref{Lagstate} is the unique element of
$H^0_{(2)}(X,L^p\otimes E)$
such that for any $s\in H^0_{(2)}(X,L^p\otimes E)$, we have
\begin{equation}\label{rep}
\big\langle s,s_{\Lambda,p}\big\rangle=
\int_{\Lambda} \big\langle s(\iota(x)),\zeta^pf(x)
\big\rangle_{L^p\otimes E}\ dv_{\Lambda}(x)\;.
\end{equation}
\end{proposition}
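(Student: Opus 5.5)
The plan is to check first that $s_{\Lambda,p}$ is a well-defined element of $H^0_{(2)}(X,L^p\otimes E)$, then derive \eqref{rep} from the reproducing property of the Bergman kernel by Fubini, and finally get uniqueness from the Riesz representation theorem.

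\emph{Well-definedness.} Since $\Lambda$ is compact and $\iota$ is continuous, $\iota(\Lambda)$ lies in a compact set $K\subset X$. By \eqref{bk2.4}, $P_p(\cdot,\iota(y))=\sum_i s_i(\cdot)\otimes s_i(\iota(y))^*$. Consider the linear functional
\[
\ell(s):=\int_\Lambda\big\langle s(\iota(y)),\zeta^pf(y)\big\rangle\,dv_\Lambda(y)
\]
on $H^0_{(2)}(X,L^p\otimes E)$. By \eqref{b1.1} applied on $K$, and using $|\zeta|_L=1$ and the finiteness of $\mathrm{Vol}(\Lambda)$, we get
\[
|\ell(s)|\le C_K\,\mathrm{Vol}(\Lambda)\,\sup_\Lambda|f|\,\|s\|_{L^2},
\]
so $\ell$ is continuous. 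Riesz then gives a unique $\sigma\in H^0_{(2)}$ with $\langle s,\sigma\rangle=\ell(s)$ for all $s$. This already settles uniqueness, because two holomorphic $L^2$ sections with the same inner products against all of $H^0_{(2)}$ must coincide.

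\emph{Identification $s_{\Lambda,p}=\sigma$.} Pairing $\sigma$ with the orthonormal basis gives
\[
\sigma=\sum_i\overline{\ell(s_i)}\,s_i=\sum_i s_i(\cdot)\int_\Lambda\big\langle \zeta^pf(y),s_i(\iota(y))\big\rangle\,dv_\Lambda(y).
\]
This expression is exactly the integral \eqref{5.3} after interchanging the sum over $i$ with the integral over $\Lambda$. The interchange is justified because, by \eqref{bk2.4}, the series converges uniformly on the compact set $\{x\}\times K$; here $x$ is fixed, or ranges over any compact set, and the convergence holds in the $\cC^0$ topology.

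Alternatively, one can compute directly. For $s\in H^0_{(2)}$, write $\langle s,s_{\Lambda,p}\rangle=\int_X\int_\Lambda\langle s(x),P_p(x,\iota(y))\zeta^pf(y)\rangle$. Fubini applies because $\int_X|P_p(x,y')|^2dv_X(x)=|P_p(y',y')|$ is bounded for $y'\in K$. Swapping the integrals and using $P_p(x,y')^*=P_p(y',x)$ together with \eqref{lm2.01a} yields \eqref{rep}. This argument also shows directly that $s_{\Lambda,p}\in L^2$, since its norm is bounded by $\int_\Lambda\|P_p(\cdot,\iota(y))\|_{L^2}\,dv_\Lambda<\infty$. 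Holomorphy follows by differentiating under the integral sign, as the kernel is smooth and the domain of integration is compact.

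\emph{Orbifold case and main obstacle.} The step needing most care is the orbifold setting with singular immersions. There the integral over $\Lambda$ is an ordinary integral on the manifold $\Lambda$, and the pointwise evaluation $P_p(x,\iota(y))$ is understood through orbifold charts. The estimate \eqref{b1.1} and the expansion \eqref{bk2.4} hold for orbifolds via invariant lifts on charts, so the argument goes through verbatim.
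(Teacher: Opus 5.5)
Your proposal is correct, and your ``alternative'' direct computation (Fubini on $X\times\Lambda$, then $P_p(x,y')^*=P_p(y',x)$ and the reproducing property \eqref{lm2.01a}, with uniqueness from the Hilbert-space argument) is exactly the paper's proof. Your Riesz/orthonormal-basis route is an equivalent repackaging. It also verifies that $s_{\Lambda,p}\in H^0_{(2)}(X,L^p\otimes E)$ and justifies the interchange of integrals, both of which the paper takes for granted. One minor nit: for $\operatorname{rk}E>1$ one has $\int_X|P_p(x,y')|^2dv_X(x)=\operatorname{Tr}P_p(y',y')$ rather than $|P_p(y',y')|$, which changes nothing.
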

\begin{proof}
Recall that the orthogonal projection $P_p$ on 
$H^0_{(2)}(X,L^p\otimes E)$
is self-adjoint with respect to the $L^2$-Hermitian product,
and restricts
to the identity of $H^0_{(2)}(X,L^p\otimes E)$ for any $p\in\N^*$.
Then
for any $s\in H^0_{(2)}(X,L^p\otimes E)$ and as $\Lambda$ is compact,
we get from (\ref{lm2.0}), \eqref{5.3} and Fubini's theorem,
\begin{equation}\label{comprep}
\begin{split}
\big\langle s,s_{\Lambda,p}\big\rangle 
& =\int_X\left<s(y),\int_{\Lambda} P_p(y,\iota(x)).
\zeta^pf(x)dv_{\Lambda}(x)\right>_{L^p\otimes E} dv_X(y)\\
& =\int_{\Lambda} \left<\int_X P_p(\iota(x),y)s(y)dv_X(y),
\zeta^pf(x)\right>_{L^p\otimes E} dv_{\Lambda}(x)\\
& =\int_{\Lambda} \big\langle s(\iota(x)),\zeta^pf(x)
\big\rangle_{L^p\otimes E}\ dv_{\Lambda}(x)\;.
\end{split}
\end{equation}
The uniqueness comes from the fact that an element of a Hilbert
space is uniquely characterized by the values of its Hermitian product
with all other elements.
\end{proof}

Set $g^{TX}_\om=\om(\cdot,J\cdot)$ and 
$g^{T\Lambda}_\om=\iota^{\ast}g^{TX}_\om$. Let
$dv_{X,\om}$ and $dv_{\Lambda,\om}$ be the Riemannian volume forms
of $(X,g^{TX}_\om)$ and $(\Lambda,g^{T\Lambda}_\om)$, respectively.
Denote by $\|\cdot\|$ the norm on $L^{2}(X, L^{p}\otimes E)$
induced by (\ref{lm2.0}) and
by $|\cdot|_{\iota^{\ast}E}$  the norm on $\iota^{\ast}E$
over $\Lambda$ induced by $h^{E}$.
The following estimate on the norm of isotropic
states is the result of \cite[Theorem 5.3]{Ioo18b}.

\begin{theorem}\label{theonorme}
There exists $b_r\in\R$ for all $r\in\N$, such that for any
$k\in\N$ and as $p\rightarrow +\infty$,
\begin{equation}\label{norme}
\norm{s_{\Lambda,p}}^2=p^{n-\frac{\dim\Lambda}{2}}
\sum_{r=0}^k p^{-r} b_r+O(p^{n-\frac{\dim\Lambda}{2}-(k+1)})\;,
\end{equation}
with the first coefficient given by the formula
\begin{equation}\label{b0norme}
b_0=2^{\frac{\dim\Lambda}{2}}\int_{\Lambda}|f|_{\iota^*E}^2
\frac{dv_{X,\om}}{dv_X}\frac{dv_{\Lambda}}{dv_{\Lambda,\om}}
dv_{\Lambda}.
\end{equation}
In particular, if $f$ does not vanish identically,
there exists $p_0\in\N^*$ such that for all $p\geq p_0$, the section
$s_{\Lambda,p}\in H^0_{(2)}(X,L^p\otimes E)$
does not vanish identically.
\end{theorem}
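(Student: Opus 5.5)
By Proposition \ref{proprepgal} applied to $s=s_{\Lambda,p}$ itself, we have
\begin{equation*}
\norm{s_{\Lambda,p}}^2=\int_\Lambda\int_\Lambda\big\langle P_p(\iota(x),\iota(y)).\zeta^pf(y),\zeta^pf(x)\big\rangle_{L^p\otimes E}\,dv_\Lambda(y)\,dv_\Lambda(x)\,.
\end{equation*}
So the statement reduces to an asymptotic expansion of a double integral of the Bergman kernel over $\Lambda\times\Lambda$. I would then localize this integral near the diagonal.

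\textbf{Step 1: localization.} For $\Lambda$ compact and $p\geq\boldsymbol{p}_0$, the exponential decay of Theorem \ref{thm:3.2new23} bounds the contribution of pairs with $d(\iota(x),\iota(y))>\delta$ by $O(p^{-\infty})$. In the orbifold case I would apply this on the orbifold charts, or on the covering via Theorem \ref{th-cov}. Since $\iota$ is proper but possibly non-injective, there are finitely many self-intersection branches. Near the diagonal of $\Lambda$, the pairs $(x,y)$ with $\iota(x)$ close to $\iota(y)$ split into the diagonal neighbourhood and neighbourhoods of the clean self-intersections. I would handle the diagonal part first and treat the intersection branches as a secondary term.

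\textbf{Step 2: near-diagonal model.} In normal coordinates around $\iota(x)$, I would use the near-diagonal expansion of the Bergman kernel,
\begin{equation*}
p^{-n}P_p(Z,Z')=\kappa^{-1/2}(Z)\kappa^{-1/2}(Z')\sum_r p^{-r/2}\mathcal{P}_r(\sqrt p Z,\sqrt p Z')+O(p^{-\infty}),
\end{equation*}
from \cite{DLM06,MM04a}. Here $\mathcal{P}_0$ is the Bargmann--Fock kernel, a Gaussian $\exp(-\tfrac{\pi}{2}\sum a_j|z_j-z'_j|^2+\sqrt{-1}\,\mathrm{phase})$. The key geometric input is the Bohr--Sommerfeld condition. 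Since $\nabla^L\zeta=0$ and $\iota^*\om=0$, the phase of $\zeta^p$ along $\Lambda$, measured in the parallel-transport trivialization used for the expansion, cancels the imaginary part $\om(Z,Z')$ of the model kernel to leading order. This cancellation holds because $\om$ vanishes on $T\Lambda$. The integrand along $\Lambda$ therefore becomes a genuine real Gaussian $e^{-c p|Z-Z'|^2}$ in tangential directions, times smooth corrections in $\sqrt p Z$ with parity properties.

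\textbf{Step 3: integration.} I would substitute $y=\exp_x(Z/\sqrt p)$ with $Z\in T_x\Lambda$ and integrate over $Z\in\R^{\dim\Lambda}$. Each $Z$-integral contributes a factor $p^{-\dim\Lambda/2}$, giving $p^{n-\dim\Lambda/2}\sum_r p^{-r/2}b_{r/2}$. Odd half-integer terms vanish by parity of the polynomials multiplying the Gaussian, as in \cite{DLM06}, which leaves a series in integer powers of $p$. For the leading term, the Gaussian integral $\int_{\R^k}e^{-\frac{\pi}{2}|Z|^2_\om}dZ=2^{k/2}$, in the metric $g^{TX}_\om$, together with the density conversions $dv_{X,\om}/dv_X$ (from $\kappa$ and the normalization of $P_p$ with respect to $dv_X$) and $dv_\Lambda/dv_{\Lambda,\om}$ (from the change of measure on $T_x\Lambda$), yields formula \eqref{b0norme}. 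In particular $b_0>0$ when $f\not\equiv0$, and non-vanishing for $p\geq p_0$ follows immediately.

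\textbf{Main obstacle.} I expect the hardest part to be the self-intersections, in both the singular-immersion and orbifold settings. There, distinct branches $\Lambda_1,\Lambda_2$ meet cleanly along a submanifold of dimension less than $\dim\Lambda$. The cross term is a Gaussian integral over the pair $(\Lambda_1,\Lambda_2)$ concentrating on the intersection. Only the transverse directions are integrated, so it is of order $p^{n-\dim\Lambda/2-(\dim\Lambda-\dim(\Lambda_1\cap\Lambda_2))/2}$ and may carry half-integer powers. My plan is to show by clean intersection that this term is smaller and contributes only to lower orders. This lower-order statement is enough for the nonvanishing claim and the $b_0$ formula. Obtaining an expansion in integer powers exactly would require checking that these terms cancel or fit the stated form, which I would defer to the detailed argument of \cite{Ioo18b}.
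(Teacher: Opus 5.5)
The paper gives no proof of its own here. Theorem~\ref{theonorme} is quoted from \cite[Theorem 5.3]{Ioo18b}, and your Steps 1--3 are the standard argument behind that result: the double integral, the localization, isotropy and the Bohr--Sommerfeld condition killing the leading phase, and parity removing half-integer powers. Your bookkeeping giving $2^{\dim\Lambda/2}$, $dv_{X,\om}/dv_X$ and $dv_\Lambda/dv_{\Lambda,\om}$ in \eqref{b0norme} is also correct. All of this is fine when $\iota$ has no self-intersections, including in orbifold charts.

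What does not hold is your claim that distinct branches always meet cleanly along a submanifold of dimension less than $\dim\Lambda$. Nothing in the hypotheses prevents branches from coinciding. Examples are the $m$-fold cover $\gamma^m$ of Lemma~\ref{BSgeod}, or the lifts $g.\wi\Lambda$, $g$ in an isotropy group, when $\iota(\Lambda)$ lies in the singular set of $X$. There the cross terms have the same order $p^{n-\frac{\dim\Lambda}{2}}$ as the diagonal term. They are weighted by the relative phases of $\zeta^p$ on the sheets and change the leading coefficient. In the orbifold case this is the multiplicity correction stated right after the theorem. For $\gamma^m$ with $f$ pulled back from the base circle, one gets $s_{\Lambda,p}=\big(\sum_{j=0}^{m-1}\epsilon^{pj}\big)s'_p$. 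Here $\epsilon$ is the holonomy of $\nabla^L$ along $\gamma$ and $s'_p$ is the integral over one sheet, so $s_{\Lambda,p}\equiv0$ whenever $\epsilon^p\neq1$. This case must be excluded by hypothesis, or accounted for by multiplicities as in \cite[Def.\,5.6]{Ioo18b}; it cannot be shown to be lower order.

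For genuinely lower-dimensional clean intersections, your order count $p^{n-\dim\Lambda+j/2}$, with $j=\dim(\Lambda_1\cap\Lambda_2)$, is right. It does give \eqref{b0norme} and the non-vanishing. However, the integer-power part you defer to \cite{Ioo18b} cannot be completed. Along $\Lambda_1\cap\Lambda_2$ the flat sections of the two branches differ by a constant factor $e^{\sqrt{-1}\theta}$. So the cross term has relative order $p^{-(\dim\Lambda-j)/2}$ and a coefficient oscillating like $e^{\sqrt{-1}p\theta}$, which in general does not fit \eqref{norme}. Take the flat model: two isotropic lines in $\C$ crossing at angle $\alpha$, with $h^L=e^{-\pi|z|^2}$ and $f$ supported near the crossing. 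Their cross term equals $2\,\mathrm{Re}\big(2e^{\sqrt{-1}p\theta}f_2(0)\overline{f_1(0)}\,(1-e^{-2\sqrt{-1}\alpha})^{-1/2}\big)+O(p^{-1/2})$. This is an oscillating $O(1)$ term beside a diagonal term of order $p^{1/2}$. So the expansion with constant $b_r$ requires $\iota$ to be free of such intersections. For immersions such as non-simple closed geodesics, your argument gives what the paper actually uses later: the leading term \eqref{b0norme} and non-vanishing for $p$ large.
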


In the case when $X$ is an orbifold,
the formula \eqref{b0norme} holds
only if the image of $\Lambda$ is not contained in the singular set
of $X$. Otherwise, we need to multiply the right hand side of
\eqref{b0norme} by the multiplicity of $\Lambda$ in $X$ as defined
in \cite[Def.\,5.6]{Ioo18b}.

In the case $X$ smooth and compact with $c_1(TX)$ even,
for $g_\om^{TX}=\om$ and for $\Lambda$ Lagrangian,
Theorem \ref{theonorme}
is the result of \cite[Theorem 3.2]{BPU95}.
Furthermore, the
expansion analogous to \eqref{norme} is only with half-integer powers
of $p$ in \cite[(85)]{BPU95} instead of integer powers as in
the expansion \eqref{norme}.
%

\section{Poincar\'e series on covering manifolds}
\label{s5.2}

Let $(\wi{X},\wi{J},g^{T\wi{X}})$ be a complete
Hermitian manifold of complex
dimension $n$, together with a positive
holomorphic Hermitian line bundle $(\wi{L},h^{\wi{L}})$
satisfying \eqref{RL>eps} and a holomorphic Hermitian
vector bundle  $(\wi{E},h^{\wi{E}})$, and
assume that these data have bounded geometry
in the sense
of Definition \ref{bndedgeomdef}.
Let $\Gamma$ be a discrete subgroup
of the group of holomorphic isometries of
$(\wi{X},\wi{J},g^{T\wi{X}})$ which lifts to
holomorphic Hermitian actions on $(\wi{L},h^{\wi{L}})$ and
$(\wi{E},h^{\wi{E}})$, and assume that the volume of
$X:=\wi{X}/\Gamma$ is finite.

Consider now a subgroup $\Gamma_0\subset\Gamma$. The goal
of the method of relative Poincar\'e series is to produce
$\Gamma$-invariant holomorphic sections of $\wi{L}^p\otimes \wi{E}$
from $\Gamma_0$-invariant holomorphic
sections, by averaging over the set
$\Gamma_0\backslash\Gamma$ of cosets
$[g]:=\{g_0g\in\Gamma:g_0\in\Gamma_0\}$, for all $g\in\Gamma$.
When $\Gamma_0=\{1\}$, we obtain the classical Poincar\'e series,
which are a basic tool in the theory of automorphic forms.
As emphasized for example in \cite[II.\,Chap.\,7]{Kol95},
the Bergman kernel can be
used in this context to construct a family of Poincar\'e series
parametrized by the points of $X=\wi{X}/\Gamma$, and we present
here a generalization which replaces the role of a point by an
isotropic submanifold of $X$.

Consider the quotient $X_0:=\wi{X}/\Gamma_0$
as a complex Riemannian orbifold $( X_0, J_0,g^{T X_0})$,
together with the holomorphic
Hermitian proper bundles $(L_0,h^{L_0})$ and $(E_0,h^{E_0})$
induced by
$(\wi{L},h^{\wi{L}})$ and $(\wi{E},h^{\wi{E}})$ in the quotient,
and let $(\Lambda,\iota_0,\zeta)$ be a compact Bohr-Sommerfeld
submanifold of $(X_0,L_0)$. We write
$\wi{\iota}:\wi{\Lambda}\rightarrow\wi{X}$ for the pullback of
$\iota_0:\Lambda\rightarrow X_{0}$ by the canonical projection
$\pi_0:\wi{X}\rightarrow X_0$. Then the action of $\Gamma_0$
on $\wi{X}$ induces one on $\wi{\Lambda}$ such that
$\Lambda=\wi{\Lambda}/\Gamma_0$, and
$\wi{\iota}:\wi{\Lambda}\rightarrow\wi{X}$ satisfies the
Bohr-Sommerfeld condition for $(\wi{L},h^{\wi{L}})$,
with unitary flat section
$\wi{\zeta}\in\cC^{\infty}(\wi\Lambda,\wi\iota^*\wi{L})$
inducing $\zeta$ in the quotient.
Note that $\wi{\Lambda}$ is not compact in general.

On the other hand, let $\pi:\wi{X}\rightarrow X:=\wi{X}/\Gamma$
be the canonical projection, and let $(L,h^L)$ and $(E,h^E)$
be the holomorphic Hermitian proper vector bundles induced by
$(\wi{L},h^{\wi{L}})$ and $(\wi{E},h^{\wi{E}})$ in the quotient.
There is then an induced covering $\hat\pi:X_0\rightarrow X$ such
that $\hat\pi\circ\pi_0=\pi$, and the proper immersion
$\iota:=\hat\pi\circ\iota_0:\Lambda\rightarrow X$ satisfies
the Bohr-Sommerfeld condition for $(L,h^L)$, with the same
flat unitary section $\zeta\in\cC^\infty(\Lambda,\iota^*L)$
via the induced natural isomorphism $\iota_0^*L_0\simeq\iota^*L$.

%
%
\begin{theorem}\label{t5.6}
For any
$f\in\cC^\infty(\Lambda,\iota_0^*E_0)$, there exists
$p_0\in\N^*$ such that for any $p\geq p_0$, the formula
\begin{equation}\label{isostatetil}
\wi{s}_{\wi{\Lambda},p}(x)=\int_{\wi{\Lambda}}\wi{P}_p(x,
\wi{\iota}(y)).\wi{\zeta}^p\wi{f}
(y)dv_{\wi{\Lambda}}(y)
\end{equation}
defines a $\Gamma_0$-invariant holomorphic section of
$\wi{L}^p\otimes \wi{E}$ for any $x\in\wi{X}$,
and the formula
\begin{equation}\label{rPsgalfla}
\wi{s}_{\Lambda,p}(x)=\sum_{[g]\in\Gamma_0\backslash\Gamma}
g^{-1}.\wi{s}_{\wi{\Lambda},p}(g.x),
\end{equation}
defines a $\Gamma$-invariant holomorphic section of
$\wi{L}^p\otimes \wi{E}$,
where the convergence is absolute and uniform for $x$ in any
compact subset of $\wi{X}$.
The section on $X$ induced in the quotient
by \eqref{rPsgalfla} belongs to
$H^0_{(2)}(X,L^p\otimes E)$, and is characterized in
$H^0_{(2)}(X,L^p\otimes E)$ by the reproducing property
\eqref{rep}.

Moreover, there exists $p_1\geqslant p_0$ such that 
the sections \eqref{isostatetil} and \eqref{rPsgalfla} do
not vanish identically for all $p\geqslant p_1$.
\end{theorem}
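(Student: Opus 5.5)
The plan is to reduce everything to Theorem \ref{th-cov}, applied twice: once to the pair $(\wi{X},\Gamma_0)$ and once to $(\wi{X},\Gamma)$. We then identify the resulting sections with the isotropic states of Definition \ref{Lagstate}, and conclude non-vanishing from Theorem \ref{theonorme}.

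\textbf{Step 1: convergence of \eqref{isostatetil}.} Choose a compact fundamental domain $\wi{F}\subset\wi{\Lambda}$ for the action of $\Gamma_0$, which exists since $\Lambda$ is compact. Write $\wi{\Lambda}=\bigcup_{g_0\in\Gamma_0}g_0.\wi{F}$, up to a measure-zero set and finite overlaps in the orbifold case. By $\Gamma_0$-equivariance of $\wi{P}_p$ and $\wi{\zeta}^p\wi{f}$, the integral \eqref{isostatetil} becomes
\[
\int_{\wi{F}}\sum_{g_0\in\Gamma_0}\wi{P}_p(x,g_0.\wi{\iota}(y))\,g_0.\wi{\zeta}^p\wi{f}(y)\,dv_{\wi{\Lambda}}(y).
\]
The sum over $\Gamma_0$ converges absolutely and uniformly for $x\in\wi{X}$ and $y\in\wi{F}$, with all derivatives, for $p>\max\{\boldsymbol{p}_0,(m-1)^2K^2/\boldsymbol{c}^2\}$. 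This follows from the exponential decay of Theorem \ref{thm:3.2new23} together with the counting bound of Lemma \ref{t3.2} applied to $\Gamma_0$; that lemma only uses properly discontinuous isometric action and needs no finite volume. Hence $\wi{s}_{\wi{\Lambda},p}$ is a smooth section, holomorphic in $x$ by differentiation under the sum, and $\Gamma_0$-invariant by equivariance.

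\textbf{Step 2: convergence of \eqref{rPsgalfla} and identification.} Combine the sum over $[g]\in\Gamma_0\backslash\Gamma$ with the sum over $\Gamma_0$ from Step 1 into a single sum over $\Gamma$:
\[
\wi{s}_{\Lambda,p}(x)=\int_{\wi{F}}\sum_{g\in\Gamma}(g^{-1},1).\wi{P}_p(g.x,\wi{\iota}(y))\,\wi{\zeta}^p\wi{f}(y)\,dv_{\wi{\Lambda}}(y).
\]
Absolute convergence justifies this rearrangement. Theorem \ref{th-cov} with the compact set $\wi{\iota}(\wi{F})$ gives uniform absolute convergence. It also identifies the inner sum with $P_p(\pi(x),\iota(y))$, which requires the finite volume of $X$. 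Therefore the section induced on $X$ is exactly the isotropic state $s_{\Lambda,p}$ of Definition \ref{Lagstate} for $(\Lambda,\iota,\zeta)$ and $f$. In particular it lies in $H^0_{(2)}(X,L^p\otimes E)$, and Proposition \ref{proprepgal} gives the reproducing property \eqref{rep}.

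\textbf{Step 3: non-vanishing.} By Theorem \ref{theonorme}, for $p$ large we have $\|s_{\Lambda,p}\|^2\sim b_0p^{n-\dim\Lambda/2}$ with $b_0>0$ when $f\not\equiv0$, in the orbifold case multiplied by the multiplicity. So \eqref{rPsgalfla} does not vanish for $p\ge p_1$. Since \eqref{rPsgalfla} is a sum of translates of $\wi{s}_{\wi{\Lambda},p}$, if $\wi{s}_{\wi{\Lambda},p}$ vanished identically then so would \eqref{rPsgalfla}. Hence \eqref{isostatetil} does not vanish either.

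The main obstacle I expect is the orbifold bookkeeping in Steps 1 and 2. This means handling the fundamental domain when $\Gamma_0$ or $\Gamma$ has fixed points on $\wi{\Lambda}$, and matching the singular-immersion volume conventions of \cite{Ioo18b} so that the combined sum really reproduces $\int_\Lambda P_p(x,\iota(y))\zeta^pf\,dv_\Lambda$ with the correct stabilizer factors, as in \eqref{orbisum}. I would handle this with a partition of unity on $\Lambda$ subordinate to orbifold charts. The analytic convergence itself should be routine given Theorem \ref{th-cov}.
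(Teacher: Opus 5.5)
Your proposal is correct, and its core is the paper's argument. You unfold the coset sum \eqref{rPsgalfla} into the full sum over $\Gamma$, identify it via Theorem \ref{th-cov} with the pullback of the isotropic state $s_{\Lambda,p}$ on $X$, and conclude with Proposition \ref{proprepgal} and Theorem \ref{theonorme}.

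The difference is in how you handle \eqref{isostatetil}:
\begin{itemize}
\item The paper applies Theorem \ref{th-cov} to $\Gamma_0$ as well, identifying $\wi{s}_{\wi\Lambda,p}$ with the pullback of the isotropic state on $X_0=\wi{X}/\Gamma_0$. However, Theorem \ref{th-cov} is stated under a finite-volume hypothesis. In the main applications $\wi{X}/\Gamma_0$ has infinite volume, for instance when $\Gamma_0$ is cyclic and generated by a loxodromic element.
\item You instead use Theorem \ref{thm:3.2new23} and Lemma \ref{t3.2} directly for $\Gamma_0$. This needs no volume assumption and gives exactly what the statement asks for: convergence, holomorphy and $\Gamma_0$-invariance. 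The cost is that you do not identify $\wi{s}_{\wi\Lambda,p}$ as an isotropic state on $X_0$, which the statement does not require.
\end{itemize}

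Your deduction of the non-vanishing of \eqref{isostatetil} from that of \eqref{rPsgalfla} is also cleaner than norm asymptotics on the infinite-volume orbifold $X_0$; the paper leaves this point implicit. The orbifold bookkeeping you flag is treated no more carefully in the paper, which simply identifies points of $\Lambda$ with their lifts.
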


\begin{proof}
Recall that $\wi{P}_p(\cdot,\cdot)$ denotes the Bergman kernel of
$\wi{L}^p\otimes \wi{E}$ over $\wi{X}$ with respect to
$dv_{\wi{X}}$. Let $P_{0,p}(\cdot,\cdot)$ be the Bergman
kernels of $L_0^p\otimes E_0^p$
over $X_0$ with respect to the
Riemannian volume form of $(X_{0}, g^{TX_{0}})$.
Let $\{s_{0,p}\}_{p\in\N^*},\,\{s_{\Lambda, p}\}_{p\in\N^*}$ be the
isotropic states over $X_0,\,X$ associated with
$(\Lambda,\iota_0,\zeta),\,(\Lambda,\iota,\zeta)$
and $f\in\cC^\infty(\Lambda,\iota_0^*E_{0})$ via the natural
isomorphism $\iota_0^*E_0\simeq\iota^*E$, i.\,e., for $x\in X_{0}$,
\begin{equation}\label{}
s_{0,p}(x)=\int_{\Lambda} P_{0,p}(x,\iota_{0}(y))
.\zeta^pf(y)dv_{\Lambda}(y),
\end{equation}
and for $x\in X$, $s_{\Lambda, p}(x)$ is given by (\ref{5.3}).

Recall that $\Gamma$ acts on $\wi{X}$ by holomorphic
isometry, so that
for any $g\in\Gamma,\,x,\,y\in\wi{X}$ and $\zeta\in\wi{L}_y$, we have
\begin{equation}\label{Pinv}
(g^{-1},1).\wi{P}_p(g.x,g.y)g.\zeta=\wi{P}_p(x,y)\zeta\;.
\end{equation}
By Theorem \ref{th-cov} and (\ref{Pinv}),
there exists $p_0\in\N^*$ such that the following
formulas hold for any $x,\,y\in\wi{X}$ and $p\geqslant p_0$,
\begin{equation}
\begin{split}\label{Pmoy}
\pi_0^*P_{0,p}(\pi_0(x),\pi_0(y))=\sum_{g_0\in\Gamma_0}
(g_0^{-1},1).\wi{P}_p(g_0.x,y)\;,
\\
(\pi^{\ast}P_{p})(x, y)=\pi^*P_p(\pi(x),\pi(y))
=\sum_{g\in\Gamma} (g^{-1},1).\wi{P}_p(g.x,y)\;,
\end{split}
\end{equation}
with uniform convergence in $x, y\in K\times\wi{X}\cup\wi{X}\times K$ for any compact
$K\subset\wi{X}$. In particular,
for any $x\in X_0$ and $p\in\N^*$, we have
\begin{equation}
\begin{split}
\pi_0^*s_{0,p}(x)
&=\int_{\Lambda} \sum_{g_0\in\Gamma_0}
(g_0^{-1},1).\wi{P}_p(g_0.x,\iota_0(y))).
\zeta^p f(y)\,dv_{\Lambda}(y)\\
&=\sum_{g_0\in\Gamma_0}\int_{\Lambda} \wi{P}_p(x,g_0^{-1}.
\iota_0(y))
g_0^{-1}.\zeta^p f(y)\,dv_{\Lambda}(y)\\
&=\int_{\wi\Lambda} \wi{P}_p(x,\wi{\iota}(\wi{y})).
\wi{\zeta}^p\wi{f}(\wi{y})\,dv_{\wi{\Lambda}}(\wi{y})\;,
\end{split}
\end{equation}
where we identified $x\in X_0$ with a lift to $\wi{X}$.
Thus we have $\pi_0^*s_{0,p}=\wi{s}_{\wi{\Lambda},p}$, and
the integral \eqref{isostatetil} converges uniformly
in any compact subset of $\wi{X}$.
This shows the first assertion.

On the other hand, using again \eqref{Pinv} and \eqref{Pmoy},
for any $x\in\wi{X}$ and $p\in\N^*$, we have
\begin{equation}
\begin{split}
\pi^*s_{\Lambda, p}(x)
&=\int_{\Lambda}\sum_{[g]\in\Gamma_0\backslash\Gamma}
\sum_{g_0\in\Gamma_0}(g^{-1}g_0^{-1},1).\wi{P}_p(g_0g.x,y)
.\zeta^pf(y)dv_{\Lambda}(y)\\
&=\sum_{[g]\in\Gamma_0\backslash\Gamma}\int_{\Lambda}
\sum_{g_0\in\Gamma_0}(g^{-1},1).\wi{P}_p(g.x,g_0^{-1}.y)
g_0^{-1}.\zeta^pf(\wi{y})dv_{\Lambda}(\wi{y})\\
&=\sum_{[g]\in\Gamma_0\backslash\Gamma}\int_{\wi\Lambda}
(g^{-1},1).\wi{P}_p(g.x,\wi{y}).
\wi{\zeta}^p\wi{f}(\wi{y})dv_{\wi\Lambda}(\wi{y})\;,
\end{split}
\end{equation}
where we identified $x\in X_0$ with a lift to $\wi{X}$.
Thus we have $\pi^*s_{\Lambda, p}=\wi{s}_{\Lambda, p}$, 
and the series in \eqref{rPsgalfla}
converges absolutely and uniformly on compact subsets.
In particular, we know from Proposition \ref{proprepgal} that
$s_{\Lambda, p}$ is
characterized by the reproducing property \eqref{rep}, and
Theorem \ref{theonorme} shows that \eqref{rPsgalfla} does not vanish
for $p\in\N^*$ large enough.
\end{proof}


%


Let now $K_{\wi{X}}$, $K_X$ be the canonical line bundles of
$\wi{X}$, $X=\wi{X}/\Gamma$ endowed with the Hermitian
metrics $h^{K_{\wi{X}}}$, $h^{K_X}$ induced by
$g^{T\wi{X}}$, $g^{TX}$ up to a common multiplicative constant.
Then $K_{\wi{X}}=\pi^* K_X$, and as $g^{T\wi{X}}=\pi^*g^{TX}$,
we get $h^{K_{\wi{X}}}=\pi^*h^{K_X}$.
Then if $\gamma:S^1\rightarrow X$ is a geodesic loop for $g^{TX}$,
there exists
$g_0\in\Gamma$ and a geodesic $\wi\gamma:\wi{S^1}\rightarrow\wi{X}$
covering $\gamma$
such that $\wi\gamma(t+1)=g_0.\wi\gamma(t)$ for all $t\in\R$,
where $\wi{S^1}=\R$ if $g_0$ generates a free subgroup
$\Gamma_0\subset\Gamma$, and $\wi{S^1}=S^1$ otherwise. 
In the first case, we say that $g_0\in\Gamma$ is \emph{loxodromic}
and that $\wi{\gamma}$ is \emph{generated} by $g_0$.
Then if the geodesic $\gamma:S^1\rightarrow X$ satisfies the Bohr-Sommerfeld condition,
we are exactly in the situation of Theorem \ref{t5.6},
where $\Gamma_0$ is the subgroup of $\Gamma$
generated by $g_0\in\Gamma$ and
$(L,h^L)$ is as in Lemma \ref{BSgeod}.
In the case
$\dim_\C X=1$ considered in Lemma \ref{BSgeod}, the reproducing formula \eqref{rep}
for the associated isotropic state is called in
\cite[\S\,3.2,\,(2)]{FK01} the \emph{period
formula} along the geodesic $\gamma$.

\section{Poincar\'e series on Hermitian symmetric spaces}
\label{s5.3}
Theorem \ref{t5.6} is especially interesting in the case
$\wi{X}$ admits a large
automorphism group and its Bergman kernel
has an explicit form, so that this method produces
a wide variety of relative Poincar\'e series
which are explicitly computable. This is typically the case for
\emph{Hermitian symmetric spaces}, which are the spaces
considered in the theory of holomorphic automorphic forms.

Let $\wi{X}$ be a complex manifold of dimension $n$, and let
$G$ be a Lie group acting transitively on $\wi{X}$ by
biholomorphisms. Assume that there exists a nowhere
vanishing holomorphic $n$-form $\nu\in H^0(\wi{X},K_{\wi{X}})$, and
set
\begin{equation}\label{cocycle}
\begin{split}
g.\nu_z&=(dg^{-1})^*.\,\nu_z=:\lambda(g,z)\nu_{g.z}
\end{split}
\end{equation}
for the induced action of $g\in G$ on $K_{\wi{X}}$,
where $\lambda(g,z)\in\C$ for any $z\in\wi{X}$.
Define a volume form $d\mu$ on $\wi{X}$ by the formula
\begin{equation}\label{mu}
d\mu:=\left(\frac{\sqrt{-1}}{2}\right)^n(-1)^{\frac{n(n-1)}{2}}
\nu\wedge\overline{\nu}\;.
\end{equation}
Let $L^2(\wi{X},\C,d\mu)$ be the $L^2$-space associated with $d\mu$
and let $H^0_{(2)}(\wi{X},\C,d\mu)$ be the
subspace of holomorphic $L^2$-functions.
Let $P_{d\mu}(\cdot,\cdot)\in\cC^{\infty}(\wi{X}\times\wi{X},\C)$
be the Schwartz kernel of the orthogonal
projection $P_{d\mu}:L^2(\wi{X},\C,d\mu)\rightarrow
H^0_{(2)}(\wi{X},\C,d\mu)$ with respect to $d\mu$.
It is uniquely characterized by the following conditions:
\begin{itemize}
\item $P_{d\mu}(z,w)$ is holomorphic in $z\in\wi{X}$ and
antiholomorphic in $w\in\wi{X}$,
\item $P_{d\mu}(z,w)=\overline{P_{d\mu}(w,z)}$ for all
$z,\,w\in\wi{X}$,
\item For any $f\in H^0_{(2)}(\wi{X},\C,d\mu)$ and $z\in\wi{X}$,
the following formula holds,
\begin{equation}
f(z)=\int_{\wi{X}}P_{d\mu}(z,w)f(w)d\mu(w)\;.
\end{equation}
\end{itemize}
We make the following
assumptions on $(\wi{X},d\mu)$:
\begin{equation}\label{cond}
\begin{split}
&\text{For any $x\in\wi{X}$, there exists $f\in H^0_{(2)}(\wi{X},\C,d\mu)$
with $f(x)\neq 0$, and}\\
&\text{for any $v\in T_x\wi{X}$, there exists
$f\in H^0_{(2)}(\wi{X},\C,d\mu)$ with $f(x)= 0$
and $df.v\neq 0$}\;.
\end{split}
\end{equation}
Note that if $\wi{X}$ is a \emph{bounded symmetric
domain} of $\C^n$
and taking $\nu=dz$,
then $d\mu$ is the Lebesgue measure and these assumptions hold
for $(\wi{X},d\mu)$. In general, domains of $\C^n$ together with
$\nu=dz$ always
satisfy the assumptions \eqref{cond}, and such domains
with a transitive biholomorphic action of a Lie group $G$
are called \emph{Hermitian symmetric domains}.

As explained in \cite[Theorem 5.1]{Kob59},
under the assumptions \eqref{cond},
we have $P_{d\mu}(z,z)>0$ for all $z\in\wi{X}$,
and the formula
\begin{equation}
\om_z:=\sqrt{-1}\partial\overline{\partial}\log P_{d\mu}(z,z)\quad
\text{for all}~~z\in\wi{X}\;,
\end{equation}
defines a $G$-invariant K\"ahler form on $\wi{X}$.
The induced metric
$g^{T\wi{X}}$ is called the \emph{Bergman metric}, and
we write $dv_{\wi{X}}$ for the Riemannian volume form of
$(\wi{X},g^{T\wi{X}})$. Let $|\cdot|_{K_{\wi{X}}}$ be the Hermitian
norm on the canonical line bundle $K_{\wi{X}}$ defined by
the formula
\begin{equation}\label{nunu=om}
d\mu=:|\nu|_{K_{\wi{X}}}^2dv_{\wi{X}}\;.
\end{equation}
Then by \eqref{mu}, the associated Hermitian metric
$h^{K_{\wi{X}}}$ on $K_{\wi{X}}$
is $G$-invariant and equal to the metric induced by $g^{T\wi{X}}$ 
divided by $2^n$. Furthermore, as explained in the proof of
\cite[IV.\,\S\,1,\,Proposition 3]{Mok89},
we have
\begin{equation}\label{KE}
\sqrt{-1}R^{K_{\wi{X}}}=\om\;,
\end{equation}
where $R^{K_{\wi{X}}}$ is the curvature of the Chern connection of
$(K_{\wi{X}},h^{K_{\wi{X}}})$.

Consider now the $L^2$-Hermitian product on
$\cC^{\infty}(\wi{X},K_{\wi{X}})$ induced by
$h^{K_{\wi{X}}}$ and $g^{T{\wi X}}$ as in
\eqref{lm2.0}, and let $L^2(\wi{X},K_{\wi{X}})$ be the associated
Hilbert space. By \eqref{nunu=om}, there is an isometry
\begin{equation}
\begin{split}
L^2(\wi{X},\C,d\mu)&\longrightarrow L^2(\wi{X},K_{\wi{X}})\\
f&\longmapsto f\nu\;,
\end{split}
\end{equation}
identifying $H^0_{(2)}(\wi{X},\C,d\mu)$ with
$H^0_{(2)}(\wi{X},K_{\wi{X}})$.
Denote by $P^{K_{\wi{X}}}(\cdot,\cdot)$
is the Bergman kernel of $H^0_{(2)}(\wi{X},K_{\wi{X}})$ with respect
to $dv_{\wi{X}}$, then
\begin{equation}\label{P=Pmu}
P^{K_{\wi{X}}}(z,w)=P_{d\mu}(z,w)\nu_z\overline{\nu}_w\;,
\end{equation}
via the identification $K_{\wi{X}}^*\simeq\overline{K_{\wi{X}}}$
induced by $h^{K_{\wi{X}}}$.

Finally, let $(\wi{L},h^{\wi{L}})$ be a holomorphic Hermitian
line bundle over $\wi{X}$ together with a lift of the action
of $G$ on $\wi{X}$, and assume that there is an identification
$(\wi{L}^q,h^{\wi{L}^q})\simeq (K_{\wi{X}}^p,h^{K_{\wi{X}}^p})$ 
compatible with the action of $G$, for some $p,\,q\in\N^*$.
Let $P^{\wi{L}}(\cdot,\cdot)$ be the Bergman kernel of
$H^0_{(2)}(\wi{X},\wi{L})$ with respect to $dv_{\wi{X}}$. Then
as explained in \cite[Ex.\,7.7]{Kol95},
there is a constant $c_{p,q}>0$ such that
\begin{equation}\label{explicitfla}
P^{\wi{L}^q}(z,w)=c_{p,q}P_{d\mu}(z,w)^p\nu_z^p~\overline\nu_w^p,
\end{equation}
for all $z,\,w\in\wi{X}$.

As $G$ acts transitively on $\wi{X}$ and by
\eqref{KE}, we see that
$(\wi{X},\wi{J},g^{T\wi{X}})$ and $(\wi{L},h^{\wi{L}})$ as above
satisfy the hypotheses of Theorem \ref{th-cov}.
Thus we can apply Theorem \ref{t5.6} to this situation, for any
discrete subgroup $\Gamma\subset G$ acting effectively and
satisfying $\text{Vol}(\wi{X}/\Gamma)<+\infty$.
In particular, from Theorem \ref{th-cov},
\eqref{cocycle} and \eqref{explicitfla},
we get the
existence of $p_0\in\N^*$ such that for all $p\geq p_0$, the Poincar\'e series
\begin{equation}\label{Poincfla}
\pi^*P^{K_X^p}(\pi(z),\pi(w))=c_p\,\left(\sum_{g\in\Gamma}
\lambda(g,z)^{-p}P_{d\mu}(g.z,w)^p\right)\nu_z^p~\overline\nu_w^p,
\end{equation}
converges uniformly in $z, w\in K\times\wi{X}\cup\wi{X}\times K$ for any compact
$K\subset\wi{X}$, where $c_p>0$ for all $p\geq p_0$. In the case $\dim_\C X=1$, if a geodesic
$\wi{\gamma}:\R\rightarrow\wi{X}$ is generated by a
loxodromic element $g_0\in\Gamma$, we can use Theorem \ref{t5.6}
to construct the relative Poincar\'e series associated
to the induced geodesic loop $\gamma:S^1\rightarrow X$,
which satisfies the period formula \eqref{rep}, and these
can be explicitly computed in a number of cases which are presented
below. In all these cases, Theorem \ref{t5.6} shows
that there exists
$p_0\in\N^*$ such that these series
do not vanish identically for $p\geqslant p_0$ large enough.

\begin{remark}
In the following section, the action of $G$ on $(\wi{X},\wi{L})$
will not be effective, but instead will satisfy the property
that the natural projection
$\pi:G\rightarrow\text{Aut}(\wi{X})$ has finite kernel.
In that case, if $\Gamma\subset G$ is a discrete subgroup,
the quotient line bundle $L=\wi{L}/\Gamma$ is not a proper
orbifold line bundle in general, so that
Theorem \ref{th-cov} does not even make sense.
Instead, the properness of $L$ is equivalent to the following
condition on $\Gamma\subset G$,
\begin{equation}\label{condGam}
g\in\Gamma~~\text{acts trivially on}~~\wi{L}~~
\text{if and only if $g$ acts trivially on}~~\wi{X}\,.
\end{equation}
Under this assumption, the action of $\Gamma$ on
$(\wi{X},\wi{L})$
factors through the action of its image $\hat\Gamma$
via the projection
$\pi:G\rightarrow\text{Aut}(\wi{X})$.
Then we can work with $\hat\Gamma$ instead, and the associated
Poincar\'e series as in formula \eqref{Poincfla} only differs by a
multiplicative factor
equal to the cardinal of $\Ker[\pi:\Gamma\rightarrow\hat\Gamma]$.

Note that condition \eqref{condGam} is always satisfied for
$\wi{L}=K_{\wi{X}}$ or if $\Gamma$ acts effectively on $\wi{L}$.
In the following section, we will assume for simplicity that $\Gamma$
acts effectively, but all the results of Section \ref{s5.4}
actually hold under the more
general assumption \eqref{condGam}.
\end{remark}

\section{Main examples of Poincar\'e series}
\label{s5.4}

In this section, we present the main examples of Hermitian
symmetric domains considered in Section \ref{s5.3}, expliciting
formula \eqref{Poincfla} in concrete situations.
In particular, we exhibit some new examples
of relative Poincar\'e series over domains of $\C^n$
for which Theorem \ref{t5.6} shows that they
do not vanish identically.

\begin{example}\label{Hilbmodvar}
Consider the special linear group
\begin{equation}
\SL_2(\R)=\bigg\{g=
\begin{pmatrix}
a & b \\
c & d
\end{pmatrix}
~:~a,b,c,d\in\R,~ad-bc=1 \bigg\}
\end{equation}
acting on the Poincar\'e upper-half plane 
$\mathbb{H}=\left\{z=x+\sqrt{-1}y\in\C~:~y>0\right\}$ by the formula
\begin{equation}\label{graphie}
g.z:=\frac{az+b}{cz+d}\;.
\end{equation}
Fix $n\in\N^*$ and set $\wi{X}=\mathbb{H}^n$.
Then the group $\SL_2(\R)^n$ acts on $\mathbb{H}^n$ by
\eqref{graphie} on each summand. If $\Gamma\subset \SL_2(\R)^n$
is a discrete subgroup, the quotient $X:=\mathbb{H}^n/\Gamma$
is called a \emph{Hilbert modular variety}.
We are then in the situation discussed above, with
$\nu=dz:=dz_1\dots dz_n$,
the canonical section of $K_{\mathbb{H}^n}$
over $\mathbb{H}^n$. The action of $g\in \SL_{2}(\R)^{n}$
on $K_{\mathbb{H}^n}$ by pushforward as in \eqref{cocycle}
is given at $z\in\mathbb{H}^n$ by
\begin{equation}\label{actcan}
g.dz=\prod_{j=1}^n(c_jz_j+d_j)^{2}dz=:j(g,z)^{2}dz\;,
\end{equation}
where $a_j,\,b_j,\,c_j,\,d_j\in\R$ are the entries of the $j$-th
summand of $g=(g_1,\dots, g_n)\in \SL_2(\R)^n$ as a matrix of
$\SL_2(\R)$. The $\SL_2(\R)^n$-invariant metric
$h^{K_{\mathbb{H}^n}}$ on
$K_{\mathbb{H}^n}$ defined
as in \eqref{nunu=om} is given by the formula
\begin{equation}
|dz|_{K_{\mathbb{H}^n}}^2=\prod_{j=1}^n y_j^2\;,
\end{equation}
where $z=(x_1+\sqrt{-1}y_1,\dots, x_n+\sqrt{-1}y_n)\in\mathbb{H}^n$.
Let $\wi{L}$ be the trivial holomorphic bundle over $\mathbb{H}^n$,
with a nowhere
vanishing holomorphic section $\sigma\in H^0(\mathbb{H}^n,\wi{L})$.
We define a lift of the action of $\SL_2(\R)^n$ to $\wi{L}$
and a
$\SL_2(\R)^n$-invariant metric $h^{\wi{L}}$ by the formulas
\begin{equation}\label{actcanprod}
g.\sigma_z=j(g,z)\sigma_{g.z}~~\quad\text{and}~~\quad
|\sigma_z|_{\wi{L}}^2=\prod_{j=1}^n y_j\;.
\end{equation}
This gives a $\SL_2(\R)^n$-equivariant identification
$(\wi{L}^2,h^{\wi{L}^2})\simeq(K_{\mathbb{H}^n},
h^{K_{\mathbb{H}^n}})$
through which $\sigma^2=dz$. The Bergman kernel of
$H^0_{(2)}(\mathbb{H}^n,\wi{L}^p)$ for
$p\in\N^*,\,p>2$, can be computed from the proof of
\cite[II.\,\S\,1,\,Proposition 1.2]{Frei90} and is given by
\begin{equation}\label{Pdmu1}
P^{\wi{L}^p}(z,w)=\left(\frac{p-1}{4\pi}\right)^n
\prod_{j=1}^n\left(\frac{2\sqrt{-1}}
{z_j-\overline{w}_j}\right)^{p}\sigma_z^p\,\overline\sigma_w^p\;,
\end{equation}
for any $z,w\in\mathbb{H}^n$, via the identification of
$\overline{\sigma}$ with the metric dual of $\sigma$ for
$h^{\wi{L}}$.
Then we have the following result.
\begin{proposition}\label{Ex1}
Let $\Gamma\subset\SL_2(\R)^n$ be a discrete subgroup
acting effectively and satisfying
$\text{Vol}(\mathbb{H}^n/\Gamma)<+\infty$.
Then for any compact set $K\subset\mathbb{H}^n$,
there exists
$p_0\in\N^*$ such that for all $p\geq p_0$ and all $w\in K$,
the convergent series
\begin{equation}\label{thhyp}
\sum_{g\in\Gamma}j(g,z)^{-p}
\prod_{j=1}^n\left(\frac{2\sqrt{-1}}
{g_j.z_j-\overline{w}_j}\right)^{p}\;,
\end{equation}
does not vanish identically in
$z\in\mathbb{H}^n$.
\end{proposition}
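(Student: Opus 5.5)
The plan is to identify the series \eqref{thhyp} with the pullback of the Bergman kernel of $L^p$ on $X:=\mathbb{H}^n/\Gamma$, via Theorem \ref{th-cov}, and then to show that this kernel does not vanish on the diagonal for large $p$.

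\emph{Step 1: rewrite the series as a Poincar\'e series.} By \eqref{actcanprod}, the action of $g\in\Gamma$ on $\wi{L}^p$ is $g.\sigma^p_z=j(g,z)^p\sigma^p_{g.z}$. Hence $(g^{-1},1).\sigma^p_{g.z}=j(g,z)^{-p}\sigma^p_z$. Plugging in \eqref{Pdmu1}, we get for $p>2$
\begin{equation*}
\sum_{g\in\Gamma}(g^{-1},1).P^{\wi{L}^p}(g.z,w)=\Big(\frac{p-1}{4\pi}\Big)^n\Big(\sum_{g\in\Gamma}j(g,z)^{-p}\prod_{j=1}^n\Big(\frac{2\sqrt{-1}}{g_j.z_j-\overline{w}_j}\Big)^p\Big)\sigma_z^p\,\overline{\sigma}_w^p .
\end{equation*}
The hypotheses of Theorem \ref{th-cov} are satisfied: $\mathbb{H}^n$ is homogeneous, so bounded geometry is automatic, and $\wi{L}$ is uniformly positive by \eqref{KE}, since $\wi{L}^2\simeq K_{\mathbb{H}^n}$ equivariantly. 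Theorem \ref{th-cov} therefore gives $p_1$ such that for $p\ge p_1$ the series converges uniformly for $w\in K$ and equals, up to the nonzero factor $((p-1)/4\pi)^n$, the expression $P_p(\pi(z),\pi(w))$, the Bergman kernel of $L^p$ on the orbifold $X$. Since $\sigma_z^p\overline{\sigma}_w^p$ never vanishes, it suffices to show that $z\mapsto P_p(\pi(z),\pi(w))$ is not identically zero for every $w\in K$ and every $p$ large.

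\emph{Step 2: reduce to the diagonal.} Evaluating at $z=w$, we have $P_p(x,x)=\sum_i|s_i(x)|^2\ge0$ by \eqref{bk2.4}. So it is enough to find $p_0$ with $P_p(x,x)>0$ for all $x\in\pi(K)$ and $p\ge p_0$. This can be obtained from Theorem \ref{theonorme}, taking $\Lambda$ to be a point. A point is a compact isotropic Bohr--Sommerfeld submanifold, with $\zeta$ any unit vector of $L_x$. Take $f=1$. Then $s_{\Lambda,p}=P_p(\cdot,x)\zeta^p$, and by the reproducing property \eqref{rep},
\begin{equation*}
\|s_{\Lambda,p}\|^2=\langle P_p(x,x)\zeta^p,\zeta^p\rangle=P_p(x,x).
\end{equation*}
Theorem \ref{theonorme} then gives $P_p(x,x)=b_0(x)p^n+O(p^{n-1})$ with $b_0(x)>0$. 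If $x$ is an orbifold point, $b_0$ is multiplied by the multiplicity, which is still positive.

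\emph{Main obstacle: uniformity in $w\in K$.} Theorem \ref{theonorme} is stated for a fixed $\Lambda$, while $p_0$ must work for all $w\in K$ at once. I would handle this in one of two ways. The first is to invoke the uniform diagonal expansion of the orbifold Bergman kernel from \cite{DLM06}, extended to the complete case in \cite{MM04a}, which is uniform on compact sets. In that expansion $b_0$ is bounded below on $\pi(K)$: away from singular points this follows from continuity, and near an orbifold point the extra terms are $O(p^{-\infty})$ off the singular set and positive on it. The second is a compactness argument: for $p$ large, continuity of $P_p(x,x)$ combined with a bound on the remainder that is uniform over small balls, which is implicit in \cite[Theorem 5.3]{Ioo18b}.

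Taking $p_0\ge\max(p_1,3)$, the series \eqref{thhyp} at $z=w$ equals $(4\pi/(p-1))^n P_p(\pi(w),\pi(w))>0$ after the identification above. Hence the series does not vanish identically in $z$.
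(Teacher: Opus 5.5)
Your Steps~1 and~2 are the paper's argument made explicit. The paper identifies \eqref{thhyp}, via \eqref{Poincfla} and \eqref{Pdmu1}, with the state $\wi{s}_{\Lambda,p}$ of Theorem~\ref{t5.6} for $\Lambda=\{w\}$ and $\Gamma_0=\{1\}$. Its non-vanishing comes from Theorem~\ref{theonorme} applied to a point, i.e.\ from $\|s_{\Lambda,p}\|^2=P_p(\pi(w),\pi(w))$, exactly as you write.

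\textbf{The gap is at orbifold points, and it cannot be closed because the statement fails there.} If $w$ is fixed by some $1\neq h\in\Gamma$, then $h$ acts on $\wi{L}^p_w$ by the root of unity $j(h,w)^p$. So every $\Gamma$-invariant section $\wi{s}$ of $\wi{L}^p$ satisfies $\wi{s}(w)=j(h,w)^p\,\wi{s}(w)$. As soon as $j(h,w)^p\neq1$, all elements of $H^0_{(2)}(X,L^p)$ vanish at $\pi(w)$. Hence $P_p(\pi(w),\pi(w))=0$ and even $P_p(\cdot,\pi(w))\equiv0$: the series \eqref{thhyp} vanishes identically in $z$.

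Your assertions that the multiplicity factor is ``still positive'' and that the extra terms of the orbifold expansion of \cite{DLM06} are ``positive'' on the singular set are therefore wrong. At $w$ these terms are $(h^{-1},1).\wi{P}_p(w,w)=j(h,w)^{-p}\wi{P}_p(w,w)$, $h\in\Gamma_w$. They add up to $\#\Gamma_w\,\wi{P}_p(w,w)$ or to $0$, according to whether the character $h\mapsto j(h,w)^p$ of $\Gamma_w$ is trivial. Near $w$ they are oscillating complex Gaussians, not positive quantities.

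For $n=1$ this happens at every elliptic point $w$, for every $p$ not divisible by $\#\Gamma_w$. Indeed $(w,1)^T$ is an eigenvector of $h$ with eigenvalue $j(h,w)\neq1$, so $h\mapsto j(h,w)$ is faithful on the cyclic group $\Gamma_w$. Concretely, $\Gamma_1(3)$ is discrete, acts effectively ($-\Id\notin\Gamma_1(3)$) with finite covolume, and contains $h=\begin{pmatrix}1&-1\\3&-2\end{pmatrix}$ of order $3$. This $h$ fixes $w=\frac12+\frac{\sqrt{-1}}{2\sqrt3}$, with $j(h,w)=3w-2=e^{2\pi\sqrt{-1}/3}$. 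Since $h.\overline{w}=\overline{w}$, one has $j(h,u)^{-p}(h.u-\overline{w})^{-p}=\overline{j(h,w)}^{\,p}(u-\overline{w})^{-p}$. Reindexing $g\mapsto hg$ therefore multiplies \eqref{thhyp} by $e^{-2\pi\sqrt{-1}p/3}$. So the series is identically zero whenever $3\nmid p$, and no $p_0$ works for $K=\{w\}$.

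What is missing is an extra hypothesis. Either $K$ avoids the points whose stabilizer acts non-trivially on the fibre of $\wi{L}$ (e.g.\ $\Gamma$ torsion-free), or $p$ is restricted to multiples of the orders of the characters $h\mapsto j(h,w)$, $w\in K$. There are only finitely many such orders, since only finitely many $h\in\Gamma$ fix points of $K$. The paper's proof, which simply applies Theorem~\ref{t5.6} to $\Lambda=\{w\}$, passes over this point as well. It also passes over the uniformity in $w\in K$ that you rightly flagged, since Theorem~\ref{theonorme} yields a $p_0$ depending on $\Lambda$.

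Under such a hypothesis your first strategy can be carried out. Away from the singular set, the expansion of \cite{DLM06} and \cite{MM04a} is uniform on compact sets, with $b_0$ continuous and positive. Near a singular point you must show, in coordinates rescaled by $\sqrt p$, that the main term plus the $h$-terms is bounded below by $c\,p^n$; this is exactly where triviality of the character is needed. Your second, ``compactness'' strategy is not an argument as it stands: continuity of $x\mapsto P_p(x,x)$ for each fixed $p$ gives no $p_0$ uniform in $x$.
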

\begin{proof}
The domain $\wi{X}=\mathbb{H}^n$ together with the nowhere
vanishing
holomorphic $n$-form $\nu=dz$ and the action of $G=\SL_2(\R)^{n}$
described by formula \eqref{actcan} is easily seen to satisfy
the assumptions \eqref{cond}.
As explained at the end
of Section \ref{s5.3}, this implies that
the domain $\mathbb{H}^n$ and the holomorphic
Hermitian line bundle $(\wi{L},h^{\wi{L}})$ described above
satisfy the hypotheses of Theorem \ref{th-cov}. Then taking
$X=\mathbb{H}^n/\Gamma$,
$L=\wi{L}/\Gamma$ and $\Lambda=\{w\}$, by
\eqref{Poincfla}, \eqref{actcan} and \eqref{Pdmu1},
the series \eqref{thhyp} identifies with the section
$\wi{s}_{\Lambda,p}\in H^0_{(2)}(\wi{X},\wi{L})$
of \eqref{rPsgalfla} up to a multiplicative constant. Thus
Theorem \ref{Ex1} is a consequence of
Theorem \ref{t5.6}.
\end{proof}

Under the hypotheses of Theorem \ref{Ex1}, the space
$H^0_{(2)}(X,L^p)$ identifies with the space
of \emph{Hilbert cusp forms of weight} $p$ via the identity
\begin{equation}
f(g.z)\,dz=f(z)\,g.dz=f(z)j(g,z)^2\,dz\,.
\end{equation}
for $\Gamma$-equivariant sections of $K_{\mathbb{H}^n}$ over
$\mathbb{H}^n$.
Then Theorem \ref{Ex1} improves the classical result,
which can be found
in \cite[I.\,Proposition 5.6]{Frei90}, that these series
are non-vanishing for an infinite number of $p\in\N^*$.

In the case $n=1$ and
$p$ even, so that we may replace $L$
by $K_X$, the relative Poincar\'e series associated with
geodesics as in Theorem \ref{t5.6} have been
considered by Katok in \cite{Kat85}. In particular,
she proves that the series associated with \emph{hyperbolic}
geodesics, i.\,e., geodesics generated by a loxodromic element
as defined at the end of Section \ref{s5.2},
generate the space of
cusp forms of weight $2p$ for all $p\in\N^*$.
The following result shows in turn that these series do not vanish
for $p\in\N^*$ large enough.

\begin{theorem}{\cite[Theorem 6.7,\,(6.18)]{Ioo18b}}
\label{Ex2}
Let $\Gamma\subset\SL_2(\R)$ be a discrete subgroup acting effectively
and satisfying
$\text{Vol}(\mathbb{H}/\Gamma)<+\infty$,
let $g_0\in\Gamma$ be a loxodromic element and let 
$\Gamma_0\subset\Gamma$ be the subgroup generated by $g_0$.
Then there exists $p_0\in\N^*$ such that for any
$p\geq p_0$,
the convergent series
\begin{equation}\label{RPShyp}
\wi{s}_{\gamma,p}(z)=\sum_{[g]\in\Gamma_0\backslash\Gamma}
j(g,z)^{-2p}\frac{1}{\left(c(g.z)^2+(d-a)(g.z)-b\right)^{p}}dz^p\;,
\end{equation}
does not vanish identically in $z\in\mathbb{H}$.
\end{theorem}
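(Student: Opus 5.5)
The plan is to realize \eqref{RPShyp}, up to a nonzero multiplicative constant, as the relative Poincaré series \eqref{rPsgalfla} of Theorem \ref{t5.6}. The data are $\wi{X}=\mathbb{H}$, $(\wi{L},h^{\wi{L}})=(K_{\mathbb{H}},h^{K_{\mathbb{H}}})$ with the metric $|dz|^2=y^2$, $\wi{E}$ trivial, $\Gamma_0=\langle g_0\rangle$, $\Lambda=\gamma=\wi\gamma/\Gamma_0$, the closed geodesic of $X_0=\mathbb{H}/\Gamma_0$ generated by $g_0$, and $f\equiv 1$. The hypotheses of Theorem \ref{th-cov} hold by the discussion at the end of Section \ref{s5.3}, because $\mathbb{H}$ is homogeneous and \eqref{KE} holds. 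Since $\sqrt{-1}R^{K_{\mathbb{H}}}=\om$ is a positive multiple of the Kähler form, Lemma \ref{BSgeod} shows that the compact geodesic loop $\gamma$ in $X_0$ is Bohr–Sommerfeld for $K_{X_0}$. Its flat section $\zeta$ is the section induced by $\dot\gamma$, i.e.\ the dual of $\dot{\wi\gamma}$ in $K_{\mathbb{H}}\simeq T^{*(1,0)}\mathbb{H}$. Theorem \ref{t5.6} then says that for $p$ large, $\wi{s}_{\Lambda,p}(z)=\sum_{[g]}g^{-1}.\wi{s}_{\wi\Lambda,p}(g.z)$ converges and is nonzero.

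It remains to compute $\wi{s}_{\wi\Lambda,p}$ explicitly. Write $x_\pm\in\R\cup\{\infty\}$ for the fixed points of $g_0$, i.e.\ the roots of $c\,w^2+(d-a)w-b=0$. First conjugate by some $h\in\SL_2(\R)$ sending $x_\pm$ to $0,\infty$, so that $\wi\gamma$ becomes the imaginary axis $w=\sqrt{-1}e^{t}$. By \eqref{explicitfla} and \eqref{Pdmu1}, $P^{K^p}(z,w)=c_p(z-\overline w)^{-2p}dz^p\,\overline{dw}^p$. The flat unitary section along the axis is $\zeta=dw/|dw|$, that is $\zeta_{\sqrt{-1}e^t}=e^{-t}dw$ up to a constant phase, and $dv_{\wi\Lambda}=dt$. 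Hence
\[
\wi{s}_{\wi\Lambda,p}(z)=c_p\int_{\R}\frac{e^{t}\,e^{-pt}\cdots}{(z+\sqrt{-1}e^t)^{2p}}\,dt\,dz^p .
\]
After tracking the pairing $\overline{dw}^p$ against $\zeta^p$, this is, up to a constant, $\int_\R e^{pt}(z+\sqrt{-1}e^{t})^{-2p}dt\,dz^p$. Substituting $s=e^t$ gives $\int_0^\infty s^{p-1}(z+\sqrt{-1}s)^{-2p}ds$, which by homogeneity equals $C_p\,z^{-p}$ with $C_p\ne 0$; one can check this via a Beta-type integral, e.g.\ by rotating the contour to compute at $z=\sqrt{-1}$. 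Hence $\wi{s}_{\wi\Lambda,p}=C_p z^{-p}dz^p$ for the axis. Transporting back by $h$ using \eqref{actcan} gives $C_p\big((z-x_+)(z-x_-)\big)^{-p}dz^p$, and $(z-x_+)(z-x_-)$ is proportional to $c z^2+(d-a)z-b$ (the case $c=0$, where one fixed point is $\infty$, is handled the same way).

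Finally, $g^{-1}.\big(F(g.z)\,dz^p\big)=j(g,z)^{-2p}F(g.z)\,dz^p$ by \eqref{actcan}. Summing over $\Gamma_0\backslash\Gamma$ therefore yields exactly \eqref{RPShyp} times $C_p c_p\ne0$, and the nonvanishing follows from the last assertion of Theorem \ref{t5.6}. The main obstacle is the explicit integral: checking that the flat section and the volume weights combine into a homogeneous integral of degree $-p$, and that the constant $C_p$ is nonzero. Should the weights not combine cleanly, the fallback is to note that $\wi{s}_{\wi\Lambda,p}$ is invariant under the dilation subgroup fixing $0,\infty$. It is therefore of the form $C z^{-p}dz^p$ by the cocycle relation, and $C\neq0$ follows from Theorem \ref{t5.6} applied to $\Gamma_0$ alone, i.e.\ from Theorem \ref{theonorme} on $X_0$.
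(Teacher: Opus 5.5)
Your argument is correct, but it identifies the series by a different route than the paper.

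\textbf{The paper's route.} The paper never computes the isotropic state on $\mathbb{H}$. It works on $X=\mathbb{H}/\Gamma$ and quotes Katok's period formula (her Proposition 4 together with her explicit formula (1.3)). This shows that the $\Gamma$-invariant section defined by \eqref{RPShyp} satisfies the reproducing property \eqref{rep} along the closed geodesic $\gamma$, up to a constant $C_p>0$. The uniqueness part of Proposition \ref{proprepgal} then identifies it with $C_p\,\wi{s}_{\gamma,p}$, so Theorem \ref{t5.6} gives the non-vanishing.

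\textbf{Your route.} You identify the series term by term on the cover. You compute the $\Gamma_0$-invariant state \eqref{isostatetil} along the axis directly from \eqref{Pdmu1} and the flat unitary section $e^{-t}dw$. This reduces to $\int_0^\infty s^{p-1}(z+\sqrt{-1}s)^{-2p}\,ds$, which at $z=\sqrt{-1}r$ equals $(-1)^pB(p,p)\,r^{-p}$. By the identity theorem it is therefore a nonzero multiple of $z^{-p}$ on all of $\mathbb{H}$, which is simpler than your homogeneity argument or the fallback. You then conjugate and average using $g^{-1}.dz_{g.z}=j(g,z)^{-2}dz_z$. Your intermediate display with $e^{t}e^{-pt}\cdots$ is garbled. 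The correct weight is $\langle\zeta^p,dw^p\rangle=e^{-pt}|dw|^{2p}=e^{pt}$, and that is the one you end up using.

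\textbf{What each buys.} Your route is self-contained and avoids matching Katok's normalization of the period integral with the measure $dv_\Lambda$ and the flat section in \eqref{rep}. It also gives the constant explicitly. Combined with Proposition \ref{proprepgal}, it actually reproves Katok's period formula. The paper's route is shorter and needs no computation on the cover. It is also the strategy that carries over to Theorems \ref{Ex5} and \ref{Ex6}, where the integrals along geodesics of $B_n$ or along Barron's Lagrangian tori are replaced by the known period formulas of Foth--Katok and Barron.
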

\begin{proof}
Consider the setting of Section \ref{s5.2}, with
$\wi{X}=\mathbb{H}$ and $\wi{L}=K_{\wi{X}}$, so that
$X=\mathbb{H}/\Gamma$ and $L=K_X$.
Let $\wi{\gamma}:\R\rightarrow\wi{X}$ be the geodesic
generated by $g_0\in\Gamma$, and write $\gamma:S^1\rightarrow X$
for the geodesic loop induced in the quotient,
considered as a compact Bohr-Sommerfeld
submanifold $(S^1,\gamma,\zeta)$ of $(X,K_X)$ as in
Lemma \ref{BSgeod}. Then by Theorem \ref{t5.6}, we get
$p_0\in\N$ such that
the associated section
$\wi{s}_{\gamma,p}\in H^0(\mathbb{H},K_{\mathbb{H}}^p)$
defined by \eqref{rPsgalfla}
does not vanish for $p\geq p_0$.

On the other hand,
by \cite[Proposition 4]{Kat85} and by
the explicit formula given in \cite[(1.3)]{Kat85},
the section of $H^0_{(2)}(X,K_X^p)$ induced by the formula
\eqref{RPShyp} for all $p\in\N^*$ satisfies the
characterizing property \eqref{rep} for $(S^1,\gamma,\zeta)$,
up to a multiplicative constant $C_p>0$.
By Proposition \ref{proprepgal}, we deduce that this section is
equal to $C_p\wi{s}_{\gamma,p}$, and thus does not vanish
identically.
\end{proof}
In the case of a smooth and compact
hyperbolic Riemann surface $(X,g^{TX})$ , this is the result of
\cite[Theorem 4.11]{BPU95}.
\end{example}

\begin{example}\label{hypex}
Let $n\in\N^*$, and consider the symplectic group
\begin{equation}
\Sp_{2n}(\R)=\bigg\{g=
\begin{pmatrix}
A & B \\
C & D
\end{pmatrix}
\in\GL_{2n}(\R)~:~A^T\,C,\,B^T\,D\text{ symmetric}\;,
~A^T\,D-C^T\,B=\Id_{\R^n} \bigg\}
\end{equation}
acting on the \emph{Siegel upper half space}
\begin{equation}
H_n=\{Z=X+\sqrt{-1}Y\in \M_{n}(\C)~:~Z\text{ symmetric},
~Y\text{ positive definite}\}
\end{equation}
by the formula
\begin{equation}
g.Z=(AZ+B)(CZ+D)^{-1}\;.
\end{equation}
We are then in the situation discussed above, with $\nu=dZ$
the canonical section of $K_{H_n}$ over $H_n$, seen as a domain
in $\C^{n(n+1)/2}$. The action of $g\in\Sp_{2n}(\R)$
on $K_{H_n}$ as in
\eqref{cocycle} is given at $Z\in H_n$ by
\begin{equation}\label{siegactcan}
g.dZ=\det(CZ+D)^{n+1}dZ=:J(g,Z)^{n+1}dZ\;,
\end{equation}
and the $\Sp_{2n}(\R)$-invariant metric
$h^{K_{H_n}}$ on $K_{H_n}$ defined as in \eqref{nunu=om} is given
by the formula
\begin{equation}
|dZ|_{K_{H_n}}^2=\det Y^{n+1}\;.
\end{equation}
Let $\wi{L}$ be the trivial holomorphic bundle over $H_n$,
with a nowhere
vanishing holomorphic section $\sigma\in H^0(H_n,\wi{L})$.
We define a lift of the action of $\Sp_{2n}(\R)$ to $\wi{L}$ and a
$\Sp_{2n}(\R)$-invariant metric $h^{\wi{L}}$ for all $Z\in H_n$ and
$g\in\Sp_{2n}(\R)$ by the formulas
\begin{equation}\label{siegactcanprod}
g.\sigma_Z=J(g,Z)\,\sigma_{g.Z}~~\quad\text{and}~~\quad
|\sigma_Z|_{L}^2=\det Y\;,
\end{equation}
This gives a $\Sp_{2n}(\R)$-equivariant identification
$(\wi{L}^{n+1},h^{\wi{L}^{n+1}})\simeq(K_{H_n},h^{K_{H_n}})$
through which $\sigma^{n+1}=dZ$.
The Bergman kernel of $H^0_{(2)}(\mathbb{H},\wi{L}^p)$
for $p\in\N^*,\,p>2n$, is computed for example in
\cite[III.\,Proposition 1]{Kli90} and is given by
\begin{equation}\label{Pdmusieg}
P^{\wi{L}^p}(Z,W)=
\frac{a_{n,p}}{\det(Z-\overline{W})^{p}}\sigma_Z^p\,
\overline\sigma_W^p\;,
\end{equation}
for any $Z,\,W\in H_n$, via the identification of
$\overline{\sigma}$ with the metric dual of $\sigma$ for
 $h^{\wi{L}}$. The constant $a_{n,p}>0$ can be found in
\cite[p.\,78]{Kli90}, up to a multiplicative
factor of $2^{n(n-1)/2}$ due to a different normalization for
the volume form in \cite[p.\,10]{Kli90}. Then we
have the following result.

\begin{proposition}\label{Ex3}
Let $\Gamma\subset\text{Sp}_{2n}(\R)$ be discrete subgroup acting
effectively and satisfying
$\text{Vol}(H_n/\Gamma)<+\infty$.
Then for any compact set $K\subset H_n$, there exists $p_0\in\N^*$ such that for any $p\geq p_0$
and any $W\in K$,
the convergent series
\begin{equation}
\sum_{g\in\Gamma}J(g,Z)^{-p}
\frac{1}{\det(g.Z-\overline{W})^{p}}\;,
\end{equation}
does not vanish identically in $Z\in H_n$.
\end{proposition}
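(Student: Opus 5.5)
The plan is to follow the proof of Proposition \ref{Ex1} almost verbatim, with the Siegel data in place of the Hilbert modular data.

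First, I check that $\wi{X}=H_n$, viewed as a domain in $\C^{n(n+1)/2}$ with $\nu=dZ$ and $G=\Sp_{2n}(\R)$ acting transitively by biholomorphisms, satisfies the assumptions \eqref{cond}. Since $H_n$ is biholomorphic to a bounded symmetric domain (the Siegel disc, via the Cayley transform), this holds. It can also be seen directly: the functions $\det(Z-\overline{W})^{-p}$ lie in $L^2$ for large $p$ and separate points and tangent directions. As explained at the end of Section \ref{s5.3}, this gives three facts. The Bergman metric $g^{TH_n}$ is $G$-invariant, hence has bounded geometry by homogeneity. The line bundle $(\wi{L},h^{\wi{L}})$ of \eqref{siegactcanprod} satisfies $(\wi{L}^{n+1},h^{\wi{L}^{n+1}})\simeq(K_{H_n},h^{K_{H_n}})$, so by \eqref{KE} its curvature is a positive multiple of $\om$ and \eqref{RL>eps} holds. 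The lifted action of $\Gamma$ is by holomorphic isometries. Hence the hypotheses of Theorem \ref{th-cov} are satisfied, with $\wi{E}$ trivial, for $\Gamma$ acting effectively with $\text{Vol}(H_n/\Gamma)<+\infty$.

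Next, I take $X=H_n/\Gamma$, $L=\wi{L}/\Gamma$, $\Gamma_0=\{1\}$ and $\Lambda=\{W\}$, a point. A point is trivially a compact isotropic Bohr--Sommerfeld submanifold: $\zeta$ is any unit vector of $\wi{L}_W$ and $f=1$. The section \eqref{rPsgalfla} is then $\sum_{g\in\Gamma}(g^{-1},1).\wi{P}_p(g.Z,W)\zeta^p$. Using \eqref{Pdmusieg} together with $g^{-1}.\sigma^p_{g.Z}=J(g,Z)^{-p}\sigma_Z^p$, which follows from \eqref{siegactcanprod}, this section equals
\[
a_{n,p}\Big(\sum_{g\in\Gamma}J(g,Z)^{-p}\det(g.Z-\overline{W})^{-p}\Big)\sigma_Z^p\,\langle\overline{\sigma}_W^p,\zeta^p\rangle .
\]
The factors $a_{n,p}>0$ and $\overline{\sigma}^p_W(\zeta^p)\neq0$ are nonzero, and $\sigma_Z$ never vanishes. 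So the series in the statement vanishes identically if and only if $\wi{s}_{\Lambda,p}$ does. Theorem \ref{t5.6}, or directly Theorem \ref{theonorme} with $\dim\Lambda=0$ so that $\norm{s_{\Lambda,p}}^2=P_p(W,W)\sim b_0p^n$, then gives non-vanishing for $p\geq p_0$.

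The main obstacle is uniformity of $p_0$ in $W\in K$, since Theorem \ref{t5.6} produces a $p_0$ for each fixed $\Lambda$. I would argue directly. By Theorem \ref{th-cov}, the quotient section is $P_p(\cdot,\pi(W))\zeta^p$, and its squared norm is $|P_p(\pi(W),\pi(W))|$. I then need a lower bound $P_p(x,x)\geq c\,p^n$ uniform for $x$ in the compact set $\pi(K)$, for $p$ large. This is the diagonal Bergman kernel expansion for orbifolds of \cite{DLM06}, extended to the complete non-compact setting in \cite{MM04a}. Its leading term is locally uniform and positive on compact sets, including singular points, where it is multiplied by the order of the isotropy group. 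Alternatively, one can write the series via \eqref{p=sump} as $\wi{P}_p(W,W)$ plus the terms with $g.W\neq W$, and bound those terms uniformly on $K$ by the exponential decay of Theorem \ref{thm:3.2new23} combined with Lemma \ref{t3.2}. The homogeneous term $\wi{P}_p(W,W)=a_{n,p}\det(2\sqrt{-1}\,\mathrm{Im}\,W)^{-p}|\sigma_W|^{2p}$ is independent of $W$ and grows like $p^{n(n+1)/2}$, so it dominates. Some care is needed at elliptic fixed points, where the stabilizer terms all contribute equally; since $J(g,W)^{-p}\det(W-\overline{W})^{-p}$ need not be positive for $g\in\Gamma_W$, the orbifold multiplicity argument from \cite{DLM06} is the safer route.
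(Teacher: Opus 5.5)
Your reduction is the paper's. You verify \eqref{cond} and the hypotheses of Theorem \ref{th-cov} as at the end of Section \ref{s5.3}. You take $\Gamma_0=\{1\}$ and $\Lambda=\{W\}$. You then use $g^{-1}.\sigma^p_{g.Z}=J(g,Z)^{-p}\sigma^p_Z$ with \eqref{Pdmusieg} to identify the series with $\wi{s}_{\Lambda,p}$ up to a nonzero factor. This is correct, and the paper's proof is exactly this followed by an appeal to Theorem \ref{t5.6}. You are also right that this only gives a $p_0$ depending on $W$.

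The step you use to make $p_0$ uniform fails, however. At a singular point $\pi(W)$, the leading term of the orbifold expansion is \emph{not} $\#\Gamma_W$ times the smooth one in general. So no lower bound $P_p(x,x)\geq c\,p^{\dim}$ is available there.

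By \eqref{siegactcanprod}, each $h\in\Gamma_W$ acts on $\wi{L}_W$ by the root of unity $J(h,W)$, and $h\mapsto J(h,W)$ is a character of the finite group $\Gamma_W$. Use $J(hg,Z)=J(h,g.Z)J(g,Z)$, the identity $\det(h.Z_1-h.Z_2)=J(h,Z_1)^{-1}\det(Z_1-Z_2)J(h,Z_2)^{-1}$, and $h.\overline{W}=\overline{W}$. Then the term of $hg$ equals $J(h,\overline{W})^{p}=J(h,W)^{-p}$ times the term of $g$. Hence, for any fixed choice of coset representatives,
\[
\sum_{g\in\Gamma}\frac{J(g,Z)^{-p}}{\det(g.Z-\overline{W})^{p}}
=\Big(\sum_{h\in\Gamma_W}J(h,W)^{-p}\Big)
\sum_{[g]\in\Gamma_W\backslash\Gamma}\frac{J(g,Z)^{-p}}{\det(g.Z-\overline{W})^{p}}\,.
\]
This vanishes identically whenever $J(\cdot,W)^p$ is nontrivial on $\Gamma_W$. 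Equivalently, every $\Gamma$-invariant section satisfies $s(W)=J(h,W)^p s(W)$, so $P_p(\pi(W),\pi(W))=0$.

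This is also what Theorem \ref{th-cov} and the local description in \cite{DLM06} give at $\pi(W)$. The $\Gamma_W$-terms contribute $\sum_{h}J(h,W)^{-p}\,\wi{P}_p(W,W)$, not $\#\Gamma_W\,\wi{P}_p(W,W)$. Your own worry about the stabilizer terms was the right one, and deferring to the orbifold multiplicity does not remove it.

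This already happens for $n=1$. Take a finite-covolume $\Gamma\subset\SL_2(\R)$ with $-1\notin\Gamma$ containing an elliptic $h$ of order $3$, for instance a lift of $\Z/3*\Z/3\subset\mathrm{PSL}_2(\Z)$. Here $J(h,W)$ is a primitive cube root of unity at the fixed point $W$, so the series vanishes for every $p\not\equiv 0\pmod 3$. So the statement itself needs an extra hypothesis at such points. The paper's appeal to Theorem \ref{t5.6} has the same blind spot: the multiplicity correction after Theorem \ref{theonorme} tacitly requires $\zeta^p$ to be $\Gamma_W$-invariant.

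Here is what your argument does prove. Suppose $K$ contains no fixed point of a nontrivial element of $\Gamma$, for example when $\Gamma$ is torsion-free. Then proper discontinuity gives $\delta:=\inf\{d(g.W,W): W\in K,\ g\neq 1\}>0$. In that case your second route is the cleaner one. Evaluate at $Z=W$. Theorem \ref{thm:3.2new23} and Lemma \ref{t3.2} bound the sum of the terms with $g\neq1$ by $C p^{N}e^{-c\delta\sqrt{p}/2}$, where $N=n(n+1)/2$, uniformly in $W\in K$. Meanwhile $|\wi{P}_p(W,W)|$ is independent of $W$ and of exact order $p^{N}$. This is self-contained and avoids both Theorem \ref{t5.6} and \cite{DLM06}.
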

\begin{proof}
By \eqref{Poincfla}, \eqref{siegactcan} and
\eqref{Pdmusieg}, this is a consequence of Theorem \ref{t5.6} with
$\wi{X}=H_n$, $\Lambda=\{W\}$ and $\wi{L}$ as above,
in the same way as in the proof of Theorem \ref{Ex1}.
\end{proof}
\end{example}

\begin{example}
Let $n\in\N^*$, and consider the Hermitian form $Q(\cdot,\cdot)$
on $\C^{n+1}$ given by
\begin{equation}
Q(z,w)=\sum_{k=1}^nz_k\overline{w}_k-z_0\overline{w}_0\;,
\end{equation}
for any $(z_0,\dots,z_n),\,(w_0,\dots,\,w_n)\in\C^{n+1}$.
Let $\SU(n,1)\subset\SL_{n+1}(\C)$ be the subgroup of
$\SL_{n+1}(\C)$ preserving $Q$.

Consider the standard affine chart on the
open set $\{z_0\neq 0\}\subset\C \mathbb{P}^n$, which identifies
$z=(z_1,\dots, z_n)\in\C^n$ to $[1:z_1:\dots:z_n]\in\C \mathbb{P}^n$
in homogeneous coordinates.
Then the induced action of $SU(n,1)$ on $\C \mathbb{P}^n$ preserves
the open unit ball $B_n\subset\C^n$ via these coordinates,
and is transitive on $B_n$.

Let $\wi{L}$ be the pullback of the tautological bundle
of $\C \mathbb{P}^n$ to $B_n$. Recall that the homogeneous polynomial
$P(z)=z_0$ can be seen as a section of its dual $\wi{L}^*$,
so that the dual $\sigma$ of $z_0$
is a nowhere vanishing holomorphic section of $\wi{L}$ over $B_n$.
The action of $SU(n,1)\subset\SL_{n+1}(\C)$ on $\C^{n+1}$
induces one on $\wi{L}$, which is easily seen to be given
for any $g\in\SU(n,1)$ and $z\in B_n$ by
\begin{equation}\label{ballactcan}
g.\sigma=(c.z+d)\sigma=:\mathfrak{J}(g,z)\sigma\;,
\end{equation}
where we wrote $$g=
\begin{pmatrix}
A & b \\
c^{T} & d
\end{pmatrix},$$
with $A\in M_n(\C),~b,\,c\in\C^n$ and $d\in\C$, and where
$c.z:=\sum_{j=1}^n c_jz_j$, with $z=(z_1,\dots,z_n)\in B_n$.

Let $\langle\cdot,\cdot\rangle$ be the standard Hermitian product
on $\C^n$ and let $|\cdot|$ be the associated norm. We define
a $\SU(n,1)$-invariant metric $h^{\wi{L}}$
on $\wi{L}$ at $z\in B_n$ using the formula
\begin{equation}
|\sigma_z|_L^2=(1-|z|^2).
\end{equation}
Then there is a canonical $\SU(n,1)$-equivariant
identification of holomorphic Hermitian
vector bundles
$(\wi{L}^{n+1},h^{\wi{L}^{n+1}})\simeq (K_{B_n},h^{K_{B_n}})$
under which $dz$ corresponds to $\sigma^{n+1}$, where $h^{K_{B_n}}$
is the metric on $K_{B_n}$ defined as in \eqref{nunu=om} with
$\nu=dz$.
As $B_n$ is a bounded symmetric domain, we are in fact
in the situation described
in Section \ref{s5.3}.

The Bergman kernel of $H^0_{(2)}(B_n,\wi{L}^p)$
is computed, for example, in
\cite[(8.10.2),\,(8.10.5)]{Kol95} and is given for any $p\in\N^*$
and $z,\,w\in B_n$ by
\begin{equation}\label{SUberg}
P^{\wi{L}^p}(z,w)=\frac{n!}{\pi^n}\begin{pmatrix}
n+p \\
n
\end{pmatrix}
\frac{1}{(1-\langle z,w\rangle)^p}\sigma_z^p\,
\overline{\sigma}_w^p\;,
\end{equation}
via the identification of
$\overline{\sigma}$ with the metric dual of $\sigma$ for
$h^{\wi{L}}$.
Then we have the following result.

\begin{proposition}\label{Ex4}
Let $\Gamma\subset\SU(n,1)$ be a discrete subgroup acting
effectively and satisfying
$\text{Vol}(B_n/\Gamma)<+\infty$. Then for any compact set $K\subset B_n$
there exists $p_0\in\N^*$ such that for any $p\geq p_0$ and $w\in K$,
the convergent series
\begin{equation}
\sum_{g\in\Gamma}\mathfrak{J}(g,z)^{-p}
\frac{1}{(1-\langle z,w\rangle)^p}\;,
\end{equation}
does not vanish identically in $z\in B_n$.
\end{proposition}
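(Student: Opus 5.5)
We plan to argue exactly as in the proofs of Proposition \ref{Ex1} and Proposition \ref{Ex3}, applying Theorem \ref{t5.6} with $\wi{X}=B_n$, $G=\SU(n,1)$, $\nu=dz$, $\wi{L}$ the restricted tautological bundle with the metric $|\sigma_z|^2=1-|z|^2$, $\Gamma_0=\{1\}$ and $\Lambda=\{w\}$. We read the summand as $\mathfrak{J}(g,z)^{-p}(1-\langle g.z,w\rangle)^{-p}$, which is what \eqref{Poincfla} produces.

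\textbf{Hypotheses of Theorem \ref{th-cov}.} Since $B_n$ is a bounded symmetric domain, it satisfies \eqref{cond} with $\nu=dz$. So, as explained in Section \ref{s5.3}, the Bergman metric $g^{TB_n}$ is $\SU(n,1)$-invariant and Kähler, and \eqref{KE} holds. The equivariant isometry $(\wi{L}^{n+1},h^{\wi{L}^{n+1}})\simeq(K_{B_n},h^{K_{B_n}})$ then gives $\sqrt{-1}R^{\wi{L}}=\frac{1}{n+1}\om$, so \eqref{RL>eps} holds. Transitivity of the isometric action of $\SU(n,1)$ on $B_n$ (lifted to $\wi{L}$) gives completeness, positive injectivity radius and uniform bounds on all derivatives of $R^{\wi{L}}$, $\wi{J}$ and $g^{TB_n}$. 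Hence Definition \ref{bndedgeomdef} is satisfied, with $\wi{E}$ trivial.

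\textbf{Identifying the series.} A point $\{w\}$ is a compact isotropic submanifold, trivially Bohr--Sommerfeld: take $\zeta$ any unit vector of $\wi{L}_{w}$ and $f=1$. Then \eqref{rPsgalfla} reads
\[
\wi{s}_{\Lambda,p}(z)=\sum_{g\in\Gamma}(g^{-1},1).P^{\wi{L}^p}(g.z,w)\,\zeta^p .
\]
By \eqref{ballactcan} and \eqref{SUberg}, this equals the series of the statement multiplied by the nonzero constant $\frac{n!}{\pi^n}\binom{n+p}{n}$, times the nonvanishing factor $\sigma_z^p\,\langle\zeta^p,\sigma_w^p\rangle$. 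Theorem \ref{th-cov} then yields, for $p$ large, convergence uniform in $z\in B_n$ and $w\in K$, and Theorem \ref{t5.6} yields nonvanishing for $p\geq p_1(w)$.

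\textbf{Main obstacle: uniformity of $p_0$ in $w\in K$.} Theorem \ref{t5.6} a priori gives a threshold depending on $w$. To get around this, we note that for $\Lambda=\{w\}$ the reproducing property \eqref{rep} gives directly
\[
\|s_{\Lambda,p}\|^2=\big\langle P_p(\pi(w),\pi(w))\zeta^p,\zeta^p\big\rangle .
\]
So it suffices to have a lower bound $P_p(x,x)\geq c\,p^n$ uniformly for $x$ in the compact set $\pi(K)\subset X$ and $p$ large. We would obtain this from the diagonal asymptotic expansion of the Bergman kernel on orbifolds \cite{DLM06}, extended to complete manifolds with bounded geometry in \cite{MM04a}. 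Its leading term is continuous and positive: it equals $(\det\dot R^L/2\pi)$ at smooth points, and is multiplied by $\#\Gamma_x$ at orbifold points. The remainder is uniform on compact sets.

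Alternatively, \eqref{p=sump} reduces the question to $B_n$ itself. There $P^{\wi{L}^p}(w,w)\zeta^p$ is an explicit constant of order $p^n$, since the geometry is homogeneous. The terms with $g\notin\Gamma_w$ are bounded uniformly for $w\in K$ by Theorem \ref{thm:3.2new23} and Lemma \ref{t3.2}, and they are $O(p^n e^{-\boldsymbol{c}'\sqrt p})$ provided $d(w,g.w)$ is bounded below uniformly on $K$. This lower bound holds by proper discontinuity, as in the proof of Lemma \ref{t3.2}. It gives a uniform $p_0$, completing the argument.
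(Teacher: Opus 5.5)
Your main route is the paper's one-line proof. You read the summand as $\mathfrak{J}(g,z)^{-p}(1-\langle g.z,w\rangle)^{-p}$, correctly, since the statement has a typo, and identify the series with the section $\wi{s}_{\Lambda,p}$ of Theorem \ref{t5.6} for $\Gamma_0=\{1\}$, $\Lambda=\{w\}$. The paper's proof says nothing about uniformity of $p_0$ in $w\in K$.

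The step you add for that uniformity fails near points of $B_n$ fixed by some $k\in\Gamma\setminus\{1\}$.
\begin{itemize}
\item \emph{The orbifold expansion is not uniform there.} Your own description of the orbifold leading term ($b_0$ at smooth points, $\#\Gamma_x b_0$ at singular ones) is discontinuous. Near a singular point $x_0$, $P_p(x,x)$ contains the terms $(k^{-1},1).\wi{P}_p(k.\wi{x},\wi{x})$, $k\in\Gamma_{x_0}\setminus\{1\}$. These are of order $p^n$ when $d(\wi{x},k.\wi{x})=O(p^{-1/2})$, so no expansion has a remainder uniform on $\pi(K)$.
\item \emph{The alternative argument has no uniform gap.} We have $\inf\{d(w,g.w):w\in K,\ g\notin\Gamma_w\}=0$ as soon as $K$ contains a fixed point $w_0$ of some $k\neq 1$: let $w\to w_0$ and take $g=k$.
\item \emph{The stabilizer terms are never estimated.} The terms with $g\in\Gamma_w\setminus\{1\}$ are as large as the $g=1$ term.
\end{itemize}

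This cannot be patched, because the statement itself fails at such points. Use the identity $\mathfrak{J}(g,z)\overline{\mathfrak{J}(g,w)}\,(1-\langle g.z,g.w\rangle)=1-\langle z,w\rangle$ and the cocycle rule. Write $g^{-1}=hk$ with $h$ in a set $R$ of representatives of $\Gamma/\Gamma_w$ and $k\in\Gamma_w$. The series then equals
\[
\Big(\sum_{k\in\Gamma_w}\chi(k)^{p}\Big)\sum_{h\in R}\overline{\mathfrak{J}(h,w)}^{\,-p}\,(1-\langle z,h.w\rangle)^{-p},\qquad \chi(k):=\mathfrak{J}(k,w),
\]
where $\chi$ is a unitary character of $\Gamma_w$. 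So the series vanishes identically whenever $\chi^p\neq 1$.

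For $n=1$ and $\Gamma$ acting effectively, $\chi$ is faithful at every elliptic point. For example, take the Cayley conjugate of $\Gamma_1(3)$. At its order-$3$ elliptic point the series vanishes for every $p\notin 3\Z$, since all cusp forms of such weight vanish there. For the same reason, the appeal to Theorem \ref{t5.6} for a fixed such $w$ is not legitimate: $\zeta^p$ is not $\Gamma_w$-invariant, so it is not a Bohr--Sommerfeld datum on the orbifold $X$.

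What your second argument does prove is the statement for $K$ disjoint from the fixed points of $\Gamma\setminus\{1\}$, for instance when $\Gamma$ is torsion-free. It does so more elementarily than the paper and with a $p_0$ uniform in $w$:
\begin{itemize}
\item $\delta_K:=\inf_{w\in K,\,g\neq 1}d(w,g.w)>0$ by compactness and proper discontinuity.
\item At $z=w$, the term $\wi{P}_p(w,w)\zeta^p$ has size of order $p^n$; it is a constant by homogeneity.
\item Theorem \ref{thm:3.2new23} and Lemma \ref{t3.2} bound the remaining terms by $O(p^n e^{-\boldsymbol{c}\,\delta_K\sqrt{p}})$, uniformly in $w\in K$.
\end{itemize}
This route does not need Theorem \ref{theonorme} at all.
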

\begin{proof}
By \eqref{Poincfla}, \eqref{ballactcan} and \eqref{SUberg},
this is a consequence of Theorem \ref{t5.6} with
$\wi{X}=B_n$, $\Lambda=\{w\}$ and $\wi{L}$ as above,
in the same way as in the proof of Theorem \ref{Ex1}.
\end{proof}

As above, the space $H^0_{(2)}(X,L^p)$ identifies
with the space of \emph{cusp forms of weight} $p$.
When
$2(n+1)$ divides $p$,
so that we replace $L$ by $K_X^2$, the relative Poincar\'e series
associated with
geodesics as in Lemma \ref{BSgeod} have been considered by Foth and
Katok in \cite{FK01}. In particular, they prove that the
series associated with loxodromic geodesics
generate the space of cusp
forms of weight $2(n+1)k$ for all $k\in\N^*$.
If furthermore, all the eigenvalues of the loxodromic element $g_0\in\Gamma$ are real
and if the endpoints $x,\,y\in\partial B_n$ of the geodesic $\wi{\gamma}\subset  B_n$ generated by $g_0$
belong to $\R^n\subset\C^n$, 
then Barron showed in \cite[\S\,2.2]{Bar18} that the geodesic $\wi{\gamma}\subset  B_n$
satisfies the Bohr-Sommerfeld condition in the sense of Definition \ref{BS}.
The following result then shows once again that these series do not
vanish for $p\in\N^*$ large enough.
In the case of a smooth, compact manifold $(X,g^{TX})$,
the result is due to Barron \cite[Theorem 3.3]{Bar18}.

\begin{theorem}\label{Ex5}
Let $\Gamma\subset\SU(n,1)$ be a discrete subgroup acting effectively
and satisfying
$\text{Vol}(B_n/\Gamma)<+\infty$,
let $g_0\in\Gamma$ be a loxodromic element with real eigenvalues
and let $\Gamma_0\subset\Gamma$ be the subgroup
generated by $g_0$. Let $\wi\gamma:\R\rightarrow B_n$
be the geodesic generated by $g_0$, and let
$x,\,y\in\partial B_n$ be the two points of the boundary
of $B_n$ joined by $\wi\gamma$, and assume that $x,\,y\in\R^n$.

Then there exists $p_0\in\N^*$ such that  for
all $p\geq p_0$, the convergent series
\begin{equation}\label{RPSlox}
s_{\gamma,p}(z)=\sum_{[g]\in\Gamma_0\backslash\Gamma}
\mathfrak{J}(g,z)^{-2(n+1)p}\frac{1}
{(\langle g.z,x\rangle\langle g.z,y\rangle)^{(n+1)p}}
\,\sigma_z^p\;,
\end{equation}
does not vanish identically in
$z\in B_n$.
\end{theorem}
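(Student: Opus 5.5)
We follow the pattern of the proof of Theorem \ref{Ex2}. Set $\wi{X}=B_n$ with the line bundle $\wi{L}$ and the $\SU(n,1)$-action described above. By the end of Section \ref{s5.3} this setting satisfies the hypotheses of Theorem \ref{th-cov}, and hence those of Theorem \ref{t5.6}, for any discrete $\Gamma$ with $\text{Vol}(B_n/\Gamma)<+\infty$.

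The geodesic $\wi\gamma:\R\to B_n$ generated by the loxodromic $g_0$ descends to a closed geodesic loop $\gamma:S^1\to X=B_n/\Gamma$. It also descends to a compact loop in $X_0=B_n/\Gamma_0$, where $\Gamma_0=\langle g_0\rangle$. Lemma \ref{BSgeod} only treats $\dim_\C X=1$, so in higher dimension we cannot use it directly. Instead we invoke Barron \cite[\S\,2.2]{Bar18}: since $g_0$ has real eigenvalues and the endpoints $x,\,y$ lie in $\R^n$, the geodesic $\wi\gamma$ satisfies the Bohr-Sommerfeld condition of Definition \ref{BS} for $\wi L^{2(n+1)}\simeq K_{B_n}^2$, with a $\Gamma_0$-invariant flat unitary section $\wi\zeta$. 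The $\Gamma_0$-invariance is the point to check. Since $g_0$ acts on the fibres of $\wi L$ along $\wi\gamma$ through the real factor $\mathfrak J(g_0,\cdot)$, the monodromy is real and unitary, so it is $\pm1$; passing to the square $K^2_{B_n}$ removes the sign. This yields a compact Bohr-Sommerfeld submanifold $(S^1,\iota_0,\zeta)$ of $(X_0,L_0^{2(n+1)})$. We take $f\equiv1$ and replace $p$ by $2(n+1)p$.

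Theorem \ref{t5.6} then provides $p_0$ such that for $p\geq p_0$ the relative Poincaré series $\wi s_{\Lambda,p}$ of \eqref{rPsgalfla} converges absolutely and uniformly on compact sets. It also descends to the unique element of $H^0_{(2)}(X,L^{2(n+1)p})$ satisfying the reproducing (period) property \eqref{rep} along $\gamma$, and it does not vanish identically for $p$ large.

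It remains to identify \eqref{RPSlox} with $\wi s_{\Lambda,p}$ up to a nonzero constant $C_p$. Here we rely on Foth--Katok \cite[\S\,6.3]{FK01}. They show that the $\Gamma_0$-invariant section
\[
\frac{\sigma^{2(n+1)p}}{(\langle z,x\rangle\langle z,y\rangle)^{(n+1)p}}
\]
(after normalizing $g_0$ by a conjugation to a diagonal loxodromic one) coincides, up to a constant, with $\int_{\wi\gamma}P^{\wi L^{2(n+1)p}}(z,\wi\gamma(t))\wi\zeta^{2(n+1)p}\,dt$. Equivalently, the resulting averaged series satisfies the period formula \eqref{rep} along $\gamma$. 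The factor $\mathfrak J(g,z)^{-2(n+1)p}$ is exactly the cocycle from \eqref{ballactcan} that appears when one writes $g^{-1}.\wi s_{\wi\Lambda,p}(g.z)$ in the trivialization by $\sigma$, so \eqref{RPSlox} is precisely \eqref{rPsgalfla} written in this trivialization. By the uniqueness statement in Proposition \ref{proprepgal}, the series \eqref{RPSlox} equals $C_p\wi s_{\Lambda,p}$.

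The main obstacle is checking that $C_p\neq0$ and that the Bohr-Sommerfeld section matches Foth--Katok's normalization. If needed, we would compute the period integral directly: reduce to $n=1$ along the complex geodesic containing $\wi\gamma$, and evaluate an explicit Beta-type integral, which is positive.
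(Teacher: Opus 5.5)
Your proposal is correct and follows essentially the same route as the paper. Barron's result \cite[\S\,2.2]{Bar18} gives the Bohr--Sommerfeld condition for the geodesic loop with respect to $K_X^2\simeq L^{2(n+1)}$, Theorem \ref{t5.6} gives convergence, the reproducing property and non-vanishing of $\wi{s}_{\Lambda,p}$, and the Foth--Katok period formula \cite[Theorem 14, \S\,6.3]{FK01} combined with the uniqueness in Proposition \ref{proprepgal} identifies \eqref{RPSlox} with $C_p\,\wi{s}_{\Lambda,p}$. The only differences are your added sketches: why the flat section is $\Gamma_0$-invariant (real monodromy $\pm1$, removed by the even power), and how one could check $C_p\neq0$ directly. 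The paper instead takes these from \cite{Bar18} and from the explicit constant $C_p>0$ in \cite{FK01}.
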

\begin{proof}
Consider the setting of Section \ref{s5.2}, with $\wi{X}=B_n$
and $\wi{L}=K_{B_n}^2$, so that $X=B_n/\Gamma$ and
$L=K_X^2$. Write
$\gamma:S^1\rightarrow X$
for the geodesic loop induced by
$\wi\gamma:\R\rightarrow\wi{X}$ in the quotient,
so that $(S^1,\gamma,\zeta)$ is a compact Bohr-Sommerfeld submanifold
 of $(X,K_X^2)$ by the result of Barron in \cite[\S\,2.2]{Bar18}.
Then as explained in the last paragraph of Section \ref{s5.2},
we can apply Theorem \ref{t5.6} to get $p_0\in\N$ such that the
associated section $\wi{s}_{\gamma,p}\in H^0(B_n,K_{B_n}^{2p})$
defined by \eqref{rPsgalfla} does not vanish for $p\geq p_0$.

On the other hand, by
\cite[Theorem 14]{FK01} and the explicit formula given in
\cite[\S\,6.3]{FK01}, the section of $H^0_{(2)}(X,K_X^{2p})$
induced by the formula \eqref{RPSlox} for all $p\in\N^*$
satisfies the
characterizing property \eqref{rep} for $(S^{1},\gamma,\zeta)$,
up to an explicit multiplicative constant $C_p>0$.
By Proposition \ref{proprepgal}, we deduce that
this sections is equal to $C_p\,\wi{s}_{\Lambda,p}$, and thus
does not vanish identically.
\end{proof}

If furthermore $n=2$ and for a loxodromic element $g_0\in\Gamma$
with real eigenvalues in $\SU(2,1)$,
Barron considers in \cite[\S\,3,\,\S\,4]{Fot02} relative Poincar\'e series 
associated with some remarkable Lagrangian tori,
and computes them explictly. The following result show that
these series do not vanish for $p\in\N^*$ large enough.

\begin{theorem}\label{Ex6}
Let $\Gamma\subset\SU(2,1)$ be a discrete subgroup acting effectively
and satisfying $\text{Vol}(B_2/\Gamma)<+\infty$,
let $g_0\in\Gamma$ be a loxodromic element 
with real eigenvalues and let $\Gamma_0\subset\Gamma$ be 
the subgroup generated by $g_0$.
Let $\wi\gamma:\R\rightarrow B_2$ be the geodesic generated
by $g_0$, let $x,\,y\in\partial B_2$ be the two points of the
boundary of $B_2$ joined by $\wi\gamma$ and let
$v\in\C^n$ be the eigenvector of $g_0$ corresponding
to the eigenvalue $1$.

Then for any $l\in\N^*$, there exists $p_0\in\N^*$ such that for
all $p\geq p_0$, the convergent series
\begin{equation}\label{rPstori}
\sum_{[g]\in\Gamma_0\backslash\Gamma}\mathfrak{J}(g,z)^{-2p}
\frac{\langle z,v\rangle^{2l}}
{(\langle z,x\rangle\langle z,y\rangle)^{3p+l}}\;
,
\end{equation}
does not vanish identically in
$z\in B_2$.
\end{theorem}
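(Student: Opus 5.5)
The plan is to follow the template of the proofs of Theorems \ref{Ex2} and \ref{Ex5}: place the series \eqref{rPstori} into the framework of Theorem \ref{t5.6} for a suitable compact Bohr-Sommerfeld submanifold, and then identify it with the relative Poincar\'e series \eqref{rPsgalfla} through the characterization by the reproducing property \eqref{rep} in Proposition \ref{proprepgal}.

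We work in the setting of Section \ref{s5.2} with $\wi{X}=B_2$ and $\wi{L}=K_{B_2}^{2/3}$. Concretely, we take $\wi{L}=\wi{\mathcal{L}}^2$, where $\wi{\mathcal{L}}$ is the pullback of the tautological bundle, so that $\wi{\mathcal{L}}^3\simeq K_{B_2}$. The exponent $3p+l$ together with the factor $\mathfrak{J}(g,z)^{-2p}$ should correspond to the weight of $\wi{L}^p$ twisted by the auxiliary factor $\langle z,v\rangle^{2l}/(\langle z,x\rangle\langle z,y\rangle)^{l}$. We may have to adjust which power of $\wi{\mathcal{L}}$ plays the role of $\wi{L}$, or absorb the $l$-dependent part into $\wi{E}$, so that the weights match exactly.

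The $\Gamma_0$-invariant submanifold is the Lagrangian torus of \cite[\S\,3,\,\S\,4]{Fot02}. Choose coordinates in which $g_0$ is diagonal with real eigenvalues $\lambda,1,\lambda^{-1}$, the fixed points $x,y$ are the attracting and repelling points, and $v$ spans the eigenline for $1$. The torus is then a $\Gamma_0$-invariant real surface $\wi{\Lambda}\subset B_2$, foliated by the orbits of the one-parameter subgroup through $g_0$ and a circle action rotating the $v$-direction. Its quotient $\Lambda=\wi{\Lambda}/\Gamma_0$ is a compact torus in $X_0=B_2/\Gamma_0$. We then need to verify three things:
\begin{enumerate}
\item[(i)] $\Lambda$ is isotropic; this holds because it is Lagrangian, as an orbit of an abelian group preserving $\om$ with vanishing moment-map differences.
\item[(ii)] $\Lambda$ satisfies the Bohr-Sommerfeld condition of Definition \ref{BS} for $(L_0,h^{L_0})$, possibly after passing to the finite cover of the torus, exactly as in the second part of Lemma \ref{BSgeod}.
\item[(iii)] The flat section $\zeta$, together with a suitable $f\in\cC^\infty(\Lambda,\iota_0^*E_0)$ of the form $e^{2\sqrt{-1}l\theta}$ along the circle factor, is nowhere zero.
\end{enumerate}
We will cite Barron's computation in \cite{Fot02} for (ii) rather than redo it.

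Given this, Theorem \ref{t5.6} yields $p_0$ such that the section $\wi{s}_{\Lambda,p}$ of \eqref{rPsgalfla} is $\Gamma$-invariant, square integrable on $X$, characterized by \eqref{rep}, and nonvanishing for $p\geq p_0$, since $f\not\equiv0$ and $b_0>0$ in Theorem \ref{theonorme}. The remaining step is to show that the series \eqref{rPstori}, multiplied by $\sigma_z^{2p}$ and summed over $\Gamma_0\backslash\Gamma$, satisfies \eqref{rep} for $(\Lambda,\iota,\zeta)$ and $f$ up to a constant $C_p\neq0$. For this we unfold the sum over $\Gamma_0\backslash\Gamma$ against $X$, reducing to a $\Gamma_0$-invariant computation on $X_0$. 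The explicit formula in \cite[\S\,4]{Fot02} expresses the inner product of a cusp form with the $\Gamma_0$-term $\langle z,v\rangle^{2l}(\langle z,x\rangle\langle z,y\rangle)^{-3p-l}$ as the integral of the form over the torus against $\zeta^pf$. This is the analogue of \cite[Theorem 14]{FK01} used in Theorem \ref{Ex5}. Proposition \ref{proprepgal} then gives equality with $C_p\wi{s}_{\Lambda,p}$, and hence non-vanishing.

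The main obstacle is this identification step, together with the precise matching of weights and the nonzero normalizing constant $C_p$. I would first try to quote Barron's period formula for the tori directly. If it is only stated for compact $X$, I would check that the proof is local on $X_0$: the unfolding only uses absolute convergence, which Theorem \ref{t5.6} provides in the finite-volume case. I would also verify $C_p\neq0$ from the explicit Beta-type integral, which must hold for all large $p$.
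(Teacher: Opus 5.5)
There is a genuine gap in where you put the dependence on $l$, and it breaks the identification step you plan to take from Barron. Your overall template agrees with the paper's: Theorem \ref{t5.6}, the reproducing property of Proposition \ref{proprepgal}, and the Bohr--Sommerfeld property and period formula imported from \cite{Fot02}.

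\textbf{How $l$ enters.} In the paper the Lagrangian itself depends on $l$. Barron constructs in \cite[\S\,4]{Fot02} a family $\wi\Lambda_l\subset B_2$, each Bohr--Sommerfeld for $(K_{B_2}^2,h^{K_{B_2}^2})$. Then \cite[Proposition 4.7]{Fot02} identifies \eqref{rPstori}, up to a constant, with the isotropic state of $\Lambda_l=\wi\Lambda_l/\Gamma_0$, and Theorem \ref{t5.6} finishes the proof. You instead fix one torus and put $l$ into $f=e^{2\sqrt{-1}l\theta}$. That produces different sections, for the following reason.
\begin{itemize}
\item Let $T\subset\SU(2,1)$ be the circle acting by $\mathrm{diag}(e^{\sqrt{-1}\phi},e^{-2\sqrt{-1}\phi},e^{\sqrt{-1}\phi})$ in the eigenbasis of $g_0$ formed by null lifts of $x$, $y$ and by $v$.
\item $T$ commutes with $\Gamma_0$, acts on $(X_0,L_0)$ by holomorphic isometries, and fixes pointwise the complex geodesic $C$ through $x,y$. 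It also preserves the tori in question, which are orbits of $T$ times the one-parameter group through $g_0$, as in your own description.
\item The unitary flat section on a connected $\Lambda$ is unique up to a phase. Hence $\zeta$ is a $T$-eigensection of some weight $c_\Lambda$ depending only on $\Lambda$. Moreover $c_\Lambda\neq c_C$, the fibre weight over $C$, because the moment map of $T$ takes a different value on $\Lambda$ than on $C$.
\item By $T$-equivariance of the Bergman kernel, the state of $(\Lambda,\zeta,f)$ has $T$-weight $p\,c_\Lambda+w_f$, where $w_f\neq0$ is the weight of your $f$.
\end{itemize}
So a $p$-independent $f$ only adds a constant offset to the weight. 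In the paper's argument no $f$ appears: the $l$-dependence is carried by the torus, i.e.\ by $\zeta$, and therefore enters multiplied by $p$. Barron's period formula says nothing about your pair $(\Lambda,f)$, so the final appeal to Proposition \ref{proprepgal} has nothing to rest on. The fix is to take $\Lambda=\Lambda_l$ and $f$ constant.

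\textbf{The bundle.} In homogeneous coordinates the summand $\langle z,v\rangle^{2l}(\langle z,x\rangle\langle z,y\rangle)^{-3p-l}$ has degree $2l-2(3p+l)=-6p$. It is therefore a section of $\wi{\mathcal L}^{6p}\simeq K_{B_2}^{2p}$. The paper accordingly takes $\wi L=K_{B_2}^2$, exactly as in Theorem \ref{Ex5}, and there is nothing to absorb into $\wi E$.

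Your choice of $\wi{\mathcal L}^2$ was presumably suggested by the factor $\mathfrak{J}(g,z)^{-2p}$ in the display. That factor is inconsistent with this homogeneity and with Theorem \ref{Ex5}, as is $\langle z,\cdot\rangle$ in place of $\langle g.z,\cdot\rangle$. The same weight count applies here. Taken literally, the summand has $T$-weight $p\,c_C+6l$, which no fixed Bohr--Sommerfeld torus can produce. So for Barron's identification to hold for all $p$, the $l$-dependent exponents must also scale with $p$.

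Do not build the argument around the literal display. With $\wi L=K_{B_2}^2$, $\Lambda=\Lambda_l$ and $f$ constant, the paper's proof is just the citation of \cite{Fot02} followed by Theorem \ref{t5.6}. Your remarks on unfolding and locality are then a reasonable way to justify carrying Barron's identification, obtained for compact quotients, over to finite volume.
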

\begin{proof}
For any $l\in\N^*$ and $\Gamma_0\subset\SU(2,1)$ generated by
a loxodromic element with real eigenvalues as above,
Barron constructs in \cite[\S\,4]{Fot02} a Lagrangian submanifold
$\wi\Lambda_l$ of $B_2$ satisfying the Bohr-Sommerfeld condition
for $(K_{B_2}^2,h^{K_{B_2}^2})$,
and shows in \cite[Proposition 4.7]{Fot02} that \eqref{rPstori}
corresponds up to a multiplicative constant to the isotropic state
associated with the Lagrangian torus
$\Lambda_l:=\wi\Lambda_l/\Gamma_0$
using the reproducing property of Proposition \ref{proprepgal}.
From Theorem \ref{t5.6}, we conclude that for any $l\in\N^*$,
there exists $p_0\in\N^*$
such that \eqref{rPstori} does not vanish identically in
$z\in B_n$ for any $p\geqslant p_0$.
\end{proof}

In the case of a smooth, compact manifold $(X,g^{TX})$,
this result was derived in \cite[\S\,4]{Fot02}, using
\cite[Theorem 3.2]{BPU95} and the fact that $\wi\Lambda_l$ is Lagrangian.
\end{example}


\def\cprime{$'$}

\end{document}